\let\ams@starttoc\@starttoc
\let\@starttoc\ams@starttoc
\patchcmd{\@starttoc}{\makeatletter}{\makeatletter\parskip\z@}{}{}
\newcommand{\kk}{\mathbf{k}}
\newcommand{\R}{\mathbb{R}}
\newcommand{\Q}{\mathbb{Q}}
\newcommand{\Z}{\mathbb{Z}}
\newcommand{\id}{\mathrm{Id}}
\newcommand{\im}{\mathrm{im}}
\newcommand{\OP}{\operatorname}
\newcommand{\pt}{\operatorname{pt}}
\newsavebox{\textvisiblespacebox}
\savebox{\textvisiblespacebox}{\texttt{aa}}
\newcommand\vartextvisiblespace[1][\wd\textvisiblespacebox]{%
  \makebox[#1]{\kern.1em\rule{.4pt}{.3ex}%
  \hrulefill%
  \rule{.4pt}{.3ex}\kern.1em}%
}
\numberwithin{equation}{section}
\newtheorem{thm}{Theorem}[section]
\newtheorem{lma}[thm]{Lemma}
\newtheorem{prp}[thm]{Proposition}
\newtheoremstyle{TheoremNum}
    {\topsep}{\topsep}              %%% space between body and thm
    {\itshape}                      %%% Thm body font
    {}                              %%% Indent amount (empty = no indent)
    {\bfseries}                     %%% Thm head font
    {.}                             %%% Punctuation after thm head
    { }                             %%% Space after thm head
    {\thmname{#1}\thmnote{ \bfseries #3}}%%% Thm head spec
\theoremstyle{TheoremNum}
\theoremstyle{definition}
\newtheorem{dfn}[thm]{Definition}
\newtheorem{ex}[thm]{Example}
\theoremstyle{remark}
\newtheorem{rmk}[thm]{Remark}
\theoremstyle:=definition,remark,plain,TheoremNum\do{% 
\expandafter\g@addto@macro\csname th@\theoremstyle\endcsname{% 
\addtolength\thm@preskip\parskip 
}% 
} 
\title[The persistence of the Chekanov--Eliashberg algebra]{The persistence of the Chekanov--Eliashberg algebra}
\author{Georgios Dimitroglou Rizell}
\address{Department of Mathematics\\
Uppsala University\\
Box 480\\
SE-751 06 Uppsala\\
Sweden}
\email{georgios.dimitroglou@math.uu.se}
\author{Michael G. Sullivan}
\address{Department of Mathematics and Statistics\\
University of Massachusetts\\
Amherst\\
MA 01003\\
USA}
\email{sullivan@math.umass.edu}
\begin{document}

\begin{abstract}

We apply the barcodes of persistent homology theory to the Chekanov-Eliashberg algebra of a Legendrian submanifold to deduce displacement energy bounds for arbitrary Legendrians. We do not require the full Chekanov-Eliashberg algebra to admit an augmentation as we linearize the algebra only below a certain action level. As an application we show that it is not possible to $C^0$-approximate a stabilized Legendrian by a Legendrian that admits an augmentation.
\end{abstract}

\maketitle

\section{Introduction}
\label{sec:Introduction}

The (Lagrangian) Arnold conjecture states that the number of intersection points of a Lagrangian submanifold with its Hamiltonian image is bounded below by the sum of the Lagrangian's Betti numbers.
Floer developed Lagrangian Floer theory to prove this bound in certain cases \cite{Floer:MorseTheoryLagrangian}, but the bound does not always hold. Chekanov, using Hofer's norm \cite{HoferNorm} for the Hamiltonian isotopy, measured how ``long" this bound persists by measuring how long Floer theory remains valid: how long $d^2=0$ holds, and how long the Floer theory remains invariant \cite{Chekanov}. Lagrangian Floer theory also provides a similar temporary lower bound on the number of Reeb chords between a Legendrian and its contact Hamiltonian image \cite{DimitroglouRizellSullivan}.

In this article, we study the persistence of Reeb chords between a Legendrian submanifold $\Lambda \subset (Y,\ker \alpha)$ of a contact manifold and its image under a contact isotopy. We replace Floer theory with the linearized Chekanov--Eliashberg algebra $\mathcal{A}(\Lambda)$ (also called the Legendrian contact DGA) induced by a choice of augmentation. We again measure things in terms of the Hofer norm of the contact Hamiltonian isotopy.
In addition to measuring how long a certain linearized Chekanov--Eliashberg homology theory remains well-defined, we also measure {\em{how much}} of it persists.

We require that $\mathcal{A}(\Lambda)$ be rigorously well-defined, that the ``handle-slide and birth/death bifurcation-analysis" (Section \ref{sec:HSBD}) proof of invariance holds, and (for Theorem \ref{thm:main}) that there is a certain correspondence of $J$-holomorphic disks (Proposition \ref{prp:ModuliSpaceLift}). 
As of this writing, these requirements restrict our ambient contact manifold to be $(Y, \xi) = (P \times \R_z, \ker{\alpha}).$ Here $(P, d\lambda)$ is an exact symplectic manifold tame at infinity (Gromov compactness holds), and the contact 1-form, which determines the Reeb flow, must be of the type $\alpha_{\OP{std}} \coloneqq dz + \lambda$ \cite{EES07}. Note that this includes one-jet spaces endowed with the canonical contact form.

\subsection{Background}

A {\bf Reeb chord} on a Legendrian submanifold is a non-trivial integral curve of the Reeb vector field $R_\alpha \in \Gamma(TY)$ defined by $\iota_{R_\alpha}\alpha=1$ and $\iota_{R_\alpha}d\alpha=0.$ We are interested in estimating the number of Reeb chords from a given Legendrian $\Lambda$ to its image under a contact isotopy. If there are no such Reeb chords, we say that the contact isotopy {\bf displaces} $\Lambda.$ Of course this notion depends on the choice of contact form.

The set of $\alpha$-Reeb chords of $\Lambda$ 
 canonically generates 
$\mathcal{A}(\Lambda)$ as a free noncommutative algebra. The grading is a certain Maslov index. 
The differential $\partial$ counts $J$-holomorphic disks in the symplectization $(\R_t \times Y, d(e^t \alpha))$ with Lagrangian boundary condition $\R_t \times \Lambda.$ 
The homotopy-type of the DGA is invariant under {\bf{Legendrian isotopy}}, which is a smooth isotopy of Legendrian submanifolds. We often notationally suppress the grading, the differential $\partial,$ the dependence on $J$ and $\alpha.$ See \cite{EES07} and references therein for definitions.

Each Reeb chord $c$ has a {\bf{length}} (also called {\bf{action}}) $ \ell(c) := \int_c \alpha.$
For $0 < l \le \infty,$
let $\mathcal{A}^l(\Lambda)$ be the unital sub-algebra generated by those generators with length bounded from above by $l.$
The action preserving properties of the differential of the Chekanov--Eliashberg algebra implies that $\mathcal{A}^l(\Lambda) \subset \mathcal{A}(\Lambda)$ is a unital sub-DGA. To that end, recall that the differential applied to a generator $c$ consists of a sum of words of generators whose lengths all are strictly less than the length of $c,$ as follows from an elementary application of Stokes' theorem.

An {\bf{augmentation}} for the DGA $\mathcal{A},$ $\varepsilon: (\mathcal{A}, \partial) \rightarrow (\kk,\partial_\kk := 0),$ is a (graded) DGA-morphism
to the ground field $\kk$ viewed as a DGA. 
We will want to choose $l$ such that $\mathcal{A}^l(\Lambda)$ has an augmentation; since $\mathcal{A}^l=\kk$ for $l>0$ sufficiently small, this is always possible.
If $\Lambda$ is loose in the sense of Murphy \cite{Murphy:Loose} and $c$ is the Reeb chord in a standard representative of the loose chart then there are a number of standard Legendrian contact homology arguments which show, up to unit, $\partial(c)=1.$ The contradiction
$$ \varepsilon \circ \partial (c) = \varepsilon (1)=1 \ne 0 = \partial_\kk \circ \varepsilon(c),$$
 means that we cannot choose $l\ge\ell(c).$

 Henceforth assume all Legendrians are compact and all isotopies are compactly supported. 
The {\bf{oscillation}} of a contact Hamiltonian $H_t:Y \times \R_t \rightarrow \R$ 
\[
\|H_t\|_{\OP{osc}}^Y 
\coloneqq \int_0^1 \left(\max_{y \in Y} H_t - \min_{ y \in Y} H_t\right)dt
\]
is the key ingredient in the Hofer norm of the corresponding contact Hamiltonian isotopy $\phi^t_{\alpha,H_t}$ (which is defined by
$ H_t(\phi^t_{\alpha,H_t}(x)) = \alpha\left(\frac{d}{dt}\phi^t_{\alpha,H_t}(x)\right)$). Contact isotopies are generated by uniquely-defined contact Hamiltonians, that depend only on the choice of contact form; we thus sometimes say the oscillation of a contact isotopy when we mean the oscillation of the corresponding generating contact Hamiltonian.
This article focuses on contact isotopies acting on Legendrian submanifolds. These are Legendrian isotopies.
For a Legendrian isotopy $\varphi^t \colon \Lambda \hookrightarrow Y$ we can consider the induced family of smooth functions
$$g_t \coloneqq \alpha\left(\frac{d\varphi^t}{dt}\right)\colon \Lambda \to \R,$$
that we identify with a family of functions $h_t \colon \varphi^t(\Lambda) \to \R$ defined on the subsets $\varphi^t(\Lambda) \subset Y.$ 
Define the {\bf{oscillation}} of a Legendrian isotopy to be
\[
\|h_t\|_{\OP{osc}}^\Lambda
\coloneqq \int_0^1 \left(\max_{y \in \varphi^t(\Lambda)} h_t - \min_{ y \in \varphi^t(\Lambda)} h_t\right)dt
\]

The perspectives between Legendrian isotopies $\varphi^t$ and ambient contact isotopies $\phi^t_{\alpha,H_t}$ can be switched. By a smooth restriction of $H_t$ to $\Lambda$ and smooth extension of $h_t$ to $P \times \R$ we obtain a bijective correspondence 
$$\phi^t_{\alpha,H_t}(\varphi^0(\Lambda))=\varphi^t(\Lambda)
\longleftrightarrow H_t|_{\varphi^t(\Lambda)}=h_t.
$$
The smooth extension can be chosen arbitrarily, and two different such extensions only results in different \emph{parametrizations} of the Legendrian isotopy. Conversely, changing the Legendrian isotopy $\varphi^t$ by the precomposition $\varphi^t \circ \psi^t$ of a smooth isotopy $\psi^t \colon \Lambda \to \Lambda$ leaves $h_t$ unchanged. See e.g.~the proof of \cite[Theorem 2.6.2]{Geiges}. 
Furthermore, in the case when $h_t$ is {\bf{indefinite}}, i.e. $h_t^{-1}(0) \neq \emptyset,$ there exists a smooth extension to a globally defined compactly supported contact Hamiltonian $H_t$ such that
$\|H_t\|^Y_{\OP{osc}} = \|h_t\|^\Lambda_{\OP{osc}}.$ Indeed, we can extend $h_t$ to $H_t$ by first making it constant in each fiber of the normal bundle of the Legendrian, and then cutting off this local extension by the multiplication with a suitable smooth cut-off function. 

By using \cite[Lemma 2.3]{DimitroglouRizellSullivan} we may replace $h_t$ by a contact Hamiltonian that is indefinite without changing its oscillatory norm. This changes the isotopy by a translation of the $z$-coordinate, which is irrelevant for the displaceability questions that we study here.

%Cutting it off with a bump-function to yield the contact Hamiltonian that generates $\bar{\Lambda}(t)$ we produce a new contact Hamiltonian, still denoted by $H_t,$ which has the same oscillatory norm as the original $H_t.$
Henceforth, we abandon these distinctions and refer to {\em{the}} oscillation, using the compromise notation of $\|H_t\|_{\OP{osc}}.$

\subsection{Results}

Fix $1 \le k \le n = \dim\Lambda.$ 
Given a Reeb chord $c,$ let $\mathcal{M}^{k}(c)$ denote the formal-dimension $k$ moduli spaces of $J$-holomorphic disks $(D, \partial D) \rightarrow (Y \times \R, \Lambda \times \R),$ with one positive boundary puncture at $c$
and $m \ge 0$ negative boundary punctures for a cylindrical almost complex structure $J$. (For $k=1$ these are exactly the moduli spaces used to define $\partial(c)$ in $\mathcal{A}(\Lambda)$ \cite{EES07}.) 
Let
$$
\tilde{\sigma}_k \coloneqq \min_{c|\mathcal{M}^{k}(c) \ne \emptyset} \ell(c) \le \infty.
$$ 
Let ${\sigma}_k \coloneqq \min\{\tilde{\sigma}_k, \tilde{\sigma}_{n-k}\}$ for $1 \le k \le n-1$ and let 
${\sigma}_k \coloneqq \tilde{\sigma}_n$ 
for $k=0,n.$
 Note that $\sigma_k=\sigma_{n-k}.$
Let $\{ \iota_0, \iota_1, \ldots, \iota_n\}$ denote an ordering such that $\sigma_{\iota_i} \ge \sigma_{\iota_{i+1}}.$ 
\begin{thm}
\label{thm:main}
 Suppose that $\mathcal{A}^l(\Lambda)$ with $0 < l \le \infty$ admits an augmentation into the field $\kk.$ 

For any compactly supported contact Hamiltonian $H_t \colon P \times \R \to \R,$ suppose that the inequality
$$\|H_t\|_{\OP{osc}} < \min\left\{l,\sigma_{\iota_i}\right\}$$ 
is satisfied for some $i \in \{0,\ldots,n\},$ and that $\phi^1_{{\alpha_{\OP{std}}},H_t}(\Lambda)$ is transverse to the Reeb flow applied to $\Lambda.$ Then there exists at least a number
$$\sum_{j=0}^{i} \dim\left(H_{\iota_j}(\Lambda; \kk)\right)$$
of Reeb chords with one endpoint on $\Lambda,$ one endpoint on $\phi^1_{\alpha,H_t}(\Lambda).$
Furthermore, these Reeb chords are all of length less than $\|H_t\|_{\OP{osc}}.$
\end{thm}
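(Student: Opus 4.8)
The plan is to combine a persistence-module/barcode argument for the linearized Chekanov–Eliashberg complex with the standard two-copy (Lagrangian cobordism) trick that converts a contact isotopy into a Legendrian-with-two-components problem. First I would consider the Legendrian $\Lambda_t := \phi^t_{\alpha_{\mathrm{std}},H_t}(\Lambda)$ and its ``two-copy'' $\Lambda \sqcup \Lambda_1$; the mixed Reeb chords between the two components are exactly the ones we want to count, and for $t$ small the two copies are a small Reeb-perturbation of a single copy, so the mixed chord complex is a deformed version of the Morse complex of $\Lambda$ (the Floer-type continuation / Morse–Bott picture of \cite{EES07} and as in \cite{DimitroglouRizellSullivan}). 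The augmentation of $\mathcal{A}^l(\Lambda)$ linearizes the relevant piece of the DGA below action $l$; together with the length filtration this produces, over the isotopy parameter, a persistence module whose barcode I would analyze. The key point is that each length filtration level gives a filtered complex, and the generating-function/Morse-theoretic identification says that at $t=0$ the homology in the window $[0,l)$ is (a shift of) $H_*(\Lambda;\kk)$, so the total bar-length available is $\sum_j \dim H_{\iota_j}(\Lambda;\kk)$, weighted by the filtration levels $\sigma_{\iota_j}$ coming from the dimension-$k$ moduli spaces $\mathcal M^k(c)$.

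Next I would carry out the quantitative step. The handle-slide and birth/death bifurcation analysis of Section \ref{sec:HSBD}, applied along the path $t \mapsto \Lambda_t$, shows that the barcode of the persistence module changes in a controlled way: each birth/death event or handle-slide moves an endpoint of a bar by at most the instantaneous oscillation, so after integrating, the total displacement of any bar endpoint along the whole isotopy is bounded by $\|H_t\|_{\mathrm{osc}}$. The correspondence of $J$-holomorphic disks in Proposition \ref{prp:ModuliSpaceLift} is what guarantees that the moduli spaces $\mathcal M^k(c)$ governing $\sigma_k$ genuinely control the filtration degrees in which the classes $H_{\iota_j}(\Lambda;\kk)$ sit — this is why the bound involves $\min\{l,\sigma_{\iota_i}\}$ rather than just $\min\{l,\sigma_1\}$: once $\|H_t\|_{\mathrm{osc}}$ is below $\sigma_{\iota_i}$ we retain all the bars carrying the classes in degrees $\iota_0,\dots,\iota_i$. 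Then I would argue: if at $t=1$ there were fewer than $\sum_{j=0}^i \dim H_{\iota_j}(\Lambda;\kk)$ mixed Reeb chords (all automatically of length $<\|H_t\|_{\mathrm{osc}}$ by the action/Stokes estimate, since a chord of length $\ge\|H_t\|_{\mathrm{osc}}$ cannot appear in this window), then the barcode at $t=1$ would have too few bars intersecting the relevant action window, contradicting the fact that the bars persisted — their endpoints moved by less than $\sigma_{\iota_i}$, hence never reached the window boundary.

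The transversality hypothesis on $\phi^1_{\alpha,H_t}(\Lambda)$ versus the Reeb flow applied to $\Lambda$ ensures that the mixed chords at $t=1$ are nondegenerate, so that ``number of bars crossing the window at $t=1$'' literally equals a count of mixed Reeb chords rather than a Morse–Bott count; I would invoke it precisely at the endpoint of the argument. One technical wrinkle to handle carefully is the indefinite-versus-definite normalization of $H_t$ and the $z$-translation ambiguity: using \cite[Lemma 2.3]{DimitroglouRizellSullivan} and the discussion preceding the theorem I would first replace $H_t$ by an indefinite contact Hamiltonian with the same oscillation, so that the relevant action window is genuinely $[0,\|H_t\|_{\mathrm{osc}})$ centered appropriately and the barcode lives where claimed.

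I expect the main obstacle to be making the ``bars move by at most the oscillation'' estimate fully rigorous when the Chekanov–Eliashberg algebra is only linearized \emph{below} action level $l$ and no global augmentation exists: one must check that all the birth/death and handle-slide moves that occur along the isotopy, and that affect bars in the window $[0,\min\{l,\sigma_{\iota_i}\})$, only involve generators of length $<l$, so that the partial augmentation $\varepsilon$ on $\mathcal A^l(\Lambda)$ suffices to linearize everything that matters. Equivalently, one must rule out ``interaction from above'' — differentials or bifurcations involving a high-action generator that could corrupt the low-action part of the barcode — which is exactly where the strict action-decrease property of $\partial$ (words in $\partial c$ have length strictly less than $\ell(c)$) and a careful bookkeeping of which moduli spaces $\mathcal M^k(c)$ become nonempty, i.e. the definition of $\sigma_k$, have to be deployed.
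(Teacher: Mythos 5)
Your outline does follow the paper's own route: a two-copy of $\Lambda$ (in the paper, a large Reeb push-off of $\Lambda$ perturbed by a Morse function, with the contact isotopy applied only to the pushed-off copy), a partial linearization of the mixed-chord complex below the augmentation level, a handle-slide/birth-death bifurcation analysis of the resulting filtered complexes, the disk correspondence of Proposition \ref{prp:ModuliSpaceLift} to make $\sigma_k$ control how far the Morse classes persist, and the transversality hypothesis at $t=1$ to turn bars into a chord count. However, the step you yourself flag as the main obstacle is not resolved by the tools you propose. Strict action decrease of $\partial$ together with bookkeeping of the $\mathcal{M}^k(c)$ does not prevent a \emph{pure} chord of length slightly above $l$ from shrinking below $l$ during the isotopy; such a chord enters the region where $\varepsilon$ is simply undefined, and the partially linearized differential then makes no sense. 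The paper's device is to let the pure-chord cutoff itself shrink, working with $\mathcal{A}^{l-l(t)}_{\OP{pure}}(\Lambda(t))$ where $l(t)$ is the oscillation accumulated up to time $t$; Lemma \ref{lma:main} then shows chord lengths decrease strictly slower than this cutoff, so no pure chord ever enters and the augmentation $\varepsilon_t$ can be transported (Proposition \ref{prp:Bifurcation}). With a fixed cutoff $l$, as in your sketch, this transport fails.

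The second gap is in the persistence step. The estimate ``bar endpoints move by at most the oscillation, hence never reach the window boundary'' is not a valid stability statement here: there are no continuation/interleaving maps (they are replaced by inclusions and quotients), and mixed generators can enter and exit the shrinking action window, converting infinite bars into finite ones and vice versa (Proposition \ref{prp:Barcode}). Relatedly, the homology of the windowed complex at $t=0$ is \emph{not} $H_*(\Lambda;\kk)$ on all of $[0,l)$ --- the Morse classes need not survive beyond an action range governed by $\sigma_k$, which is exactly why the paper tracks each class $x_i$ with a \emph{pair} of shrinking windows $\mathscr{B}^{i,\pm}_t$ of size roughly $\min\{\sigma_k,l\}-l(t)$, and uses the relative-rate estimate of Lemma \ref{lma:InfiniteBars} (the gap between two mixed chords changes no faster than the window shrinks, since all mixed chords start on the fixed copy where $H_t$ vanishes) to show no chord can enter and cap the tracked infinite bar. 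Finally, to obtain the full count $\sum_{j\le i}\dim H_{\iota_j}(\Lambda;\kk)$ one must show the tracked action levels $\ell_i(1)$ are pairwise distinct, so that different homology classes yield different Reeb chords at $t=1$; your sketch does not address this, and without it the argument only produces some nonempty set of chords rather than the stated lower bound.
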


%\begin{rmk}
%If $H_t$ is indefinite, by which we mean that $H_t^{-1}(0) \ne \emptyset$ holds for all $t,$ then the Reeb chords produced by the above theorem are all of length less than $\|H_t\|_{\OP{osc}}.$
%\end{rmk}

\begin{rmk}
If $\Lambda$ is spin and orientable, we can define the Chekanov-Eliashberg algebra with $\Q$ or $\Z_p$-coefficients for $p$ prime \cite{EES05c}.
Our arguments work for these fields, 
so we can set $\kk$ to equal $\Q$ or $\Z_p$ in the above bound. 
Without this assumption on $\Lambda,$ we set $\kk$ equal to $\Z_2.$
\end{rmk}

\begin{rmk}
\label{rmk:pointconstraints}
Fix a generic choice of points $\pt_i \in \Lambda$ disjoint from the Reeb chords, one for each connected component of $\Lambda.$
 If we wish to, for the definition of $\tilde{\sigma}_n,$
 we can impose the additional requirement that 
 some disk in $\mathcal{M}^{n}(c)$ must have its boundary pass
 through the union of rays $\R \times \{\pt_1,\ldots,\pt_{\pi_0(\Lambda)}\}.$
\end{rmk}

Recall the {\bf{standard Legendrian sphere}} in $\R^{2n+1}$ whose front projection in $\R^{n+1}_{x_1, \ldots, x_n,z}$ has an $O(n)$-symmetry in the $x_i$-directions. It has one Reeb chord $c$ along the $z$-axis of Legendrian Contact Homology index $n$ and its Thurston-Bennequin invariant is $\tt{tb}=(-1)^{(n^2+n+1)/2}$. The front when $n=2$ is depicted in Figure \ref{fig:standardsphere}. See \cite[ Section 4.3]{EES05b} for a review of these concepts.
When $\ell(c) = a$ we denote this sphere by $\Lambda_{\OP{St}}(a).$

\begin{figure}[htp]
\vspace{5mm}
\centering
\labellist
\pinlabel $z$ at 44 63
\pinlabel $a/2$ at 53 50
\pinlabel $-a/2$ at 55 7
\pinlabel $x_1$ at 20 2
\pinlabel $x_2$ at 100 27
\pinlabel $\Lambda_{\OP{St}}(a)$ at 73 40
\pinlabel $\color{red}c$ at 40 38
\endlabellist
\includegraphics[scale=1.5]{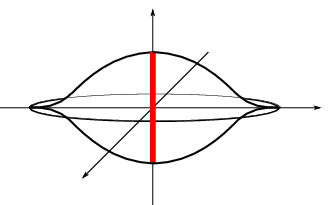}
\vspace{3mm}
\caption{The front of the standard Legendrian two-sphere.}
\label{fig:standardsphere}
\end{figure}

\begin{ex}
\label{ex:standard}
For the standard Legendrian $n$-dimensional sphere $\Lambda_{\OP{St}}(a) \subset \R^{2n+1}$ we have $l=+\infty$ and
$$\sigma_k=\begin{cases} +\infty, & 0< k <n,\\ a, & k=0,n. \end{cases}$$
 (See e.g.~\cite{NonIsoLeg}.) Theorem \ref{thm:main} shows that a contact Hamiltonian that displaces $\Lambda_{\OP{St}}(a)$ must be of oscillation at least $a.$ 
\end{ex}

\begin{ex}
\label{ex:stabilized}
For the stabilized unknot $\Lambda_{\OP{Stab}}$
shown in Figures \ref{fig:stabilized} and \ref{fig:stabilizedproj}, $l=\min\{\ell(c_1),\ell(c_2)\}.$ If we take the point constraint $\pt \in \Lambda_{\OP{Stab}}$ for the definition of $\sigma_0=\sigma_1$ as in Remark \ref{rmk:pointconstraints} to live near the crossing of the front projection of $\Lambda_{\OP{Stab}},$ we further get $\sigma_0 = \sigma_1 = +\infty$; there are no pseudoholomorphic discs with one positive punctures passing through that region.
\end{ex}

\begin{figure}[htp]
	\vspace{3mm}
	\labellist
	\pinlabel $z$ at 8 100
	\pinlabel $\color{red}c_1$ at 68 46
	\pinlabel $\color{red}c_2$ at 159 46
	\pinlabel $x$ at 233 7
	\endlabellist
	\includegraphics{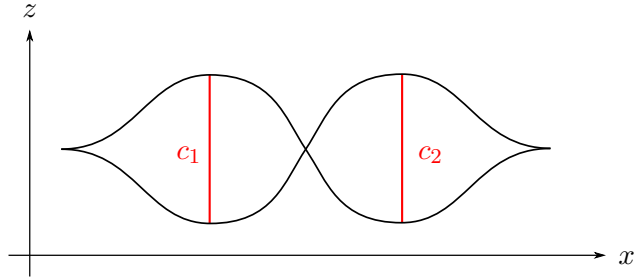}
	\caption{The front projection of the stabilized unknot $\Lambda_{\OP{Stab}}$.}
	\label{fig:stabilized}
\end{figure}

\begin{figure}[htp]
	\labellist
	\pinlabel $c_1$ at 59 42
	\pinlabel $c_2$ at 167 42
	\pinlabel $x$ at 233 0
	\endlabellist
	\includegraphics{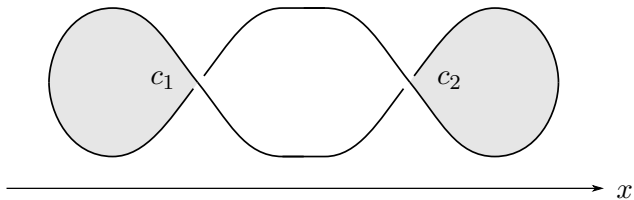}
	\caption{
	The Lagrangian projection of the stabilized unknot $\Lambda_{\OP{Stab}}.$ The pseudoholomorphic disks with one positive puncture are shaded.}
	\label{fig:stabilizedproj}
\end{figure}

\begin{lma}
\label{lma:sharp}
 For any closed Legendrian $\Lambda \subset \{|z| \le a/2\} \subset (\R^{2n+1},\alpha_{\OP{std}})$ and $\epsilon>0,$ there exists a contact Hamiltonian of oscillatory norm $a+\epsilon$ that displaces $\Lambda.$ In particular, Theorem \ref{thm:main} is sharp for Example \ref{ex:standard}
\end{lma}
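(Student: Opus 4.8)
The plan is to construct the displacing contact Hamiltonian explicitly by exploiting the product structure of $(\R^{2n+1}, \alpha_{\OP{std}}) = (\R^{2n}_{x,y} \times \R_z, dz + \lambda)$, where $\lambda = \sum y_i\,dx_i$ (or $\frac12\sum(x_i\,dy_i - y_i\,dx_i)$). The Reeb vector field is $\partial_z$, so Reeb chords from $\Lambda$ to its image are vertical segments. If we can find a contact isotopy whose time-one map translates $\Lambda$ in the $z$-direction by more than its $z$-extent — which is at most $a$ since $\Lambda \subset \{|z| \le a/2\}$ — then no vertical Reeb chord can connect $\Lambda$ to the image, so $\Lambda$ is displaced. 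The contact Hamiltonian $H \equiv c$ (a constant) generates the flow $z \mapsto z + ct$ (translation in $z$), and its oscillatory norm is $\|H\|_{\OP{osc}} = 0$ since $\max = \min$; but this $H$ is not compactly supported, which is why the naive bound of $0$ fails and the true answer is $a + \epsilon$.

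So first I would take a large ball $B \subset \R^{2n}$ containing the projection of $\Lambda$ to the $(x,y)$-factor, and consider the Hamiltonian $H_t(x,y,z) = \rho(x,y)\cdot\chi(z)\cdot c$ where $\rho$ is a cutoff equal to $1$ on a neighborhood of the relevant region, $\chi$ is a cutoff in $z$ equal to $1$ on $\{|z| \le a/2 + \delta\}$ and supported in a slightly larger interval, and $c$ is slightly larger than $a$. On the support of $\Lambda$ and throughout the region swept by $\Lambda$ during the isotopy, $H_t$ agrees with the constant $c$, so the generated contact flow restricted to that region is exactly vertical translation by $ct$, moving $\Lambda$ up by $c > a$ and hence off of every vertical Reeb chord emanating from $\Lambda$. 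Here one must be slightly careful: the cutoff in $z$ means the flow is not literally translation everywhere, but one only needs the time-one image of $\Lambda$ to avoid the vertical rays through $\Lambda$, and by choosing the cutoffs generously (support of $\chi$ in $\{|z| < a/2 + \epsilon/2\}$, say, with $c$ close to $a + \epsilon$) the orbit of each point of $\Lambda$ stays in the region where $H_t = c$ long enough to escape upward past $z = a/2$, after which it can only continue moving up or stop, never returning. The oscillation is then $\|H_t\|_{\OP{osc}} = \max H - \min H = c - 0 = c < a + \epsilon$, since $\rho, \chi \ge 0$ force $\min H = 0$ and the product of the cutoffs with $c$ achieves maximum $c$; adjusting constants gives exactly norm $a + \epsilon$ or less.

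The sharpness claim for Example \ref{ex:standard} is then immediate: Theorem \ref{thm:main} applied with $i$ chosen so that $\iota_i = 0$ (using $\sigma_0 = a$, $l = +\infty$, and $\dim H_0(\Lambda_{\OP{St}}(a);\kk) = 1$) shows that any displacing contact Hamiltonian has $\|H_t\|_{\OP{osc}} \ge a$ — indeed, if $\|H_t\|_{\OP{osc}} < a$ there would be at least one Reeb chord, contradicting displacement. Combined with Lemma \ref{lma:sharp}, which produces displacing Hamiltonians of norm $a + \epsilon$ for all $\epsilon > 0$, this pins the displacement energy of $\Lambda_{\OP{St}}(a)$ to exactly $a$, so the bound in Theorem \ref{thm:main} cannot be improved.

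The main obstacle is the interplay between compact support and the $z$-cutoff: one must verify that cutting off $H$ in the $z$-direction does not allow points of $\Lambda$ to ``escape downward'' or get trapped at intermediate heights in a way that leaves a Reeb chord. The cleanest way to handle this is to note that $H_t \ge 0$ everywhere and that $\partial_z H_t \le 0$ only above $z = a/2 + \delta$, so a point starting at $|z| \le a/2$ first moves strictly upward at constant speed $c$ until it exits $\{|z| \le a/2 + \delta\}$; after that its $z$-coordinate is still non-decreasing (the flow of a non-negative contact Hamiltonian for $\alpha_{\OP{std}}$ moves $z$ up wherever $H > 0$, and stays put where $H = 0$), so it never drops back below $a/2$. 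Hence at time one every point of $\Lambda$ has $z$-coordinate exceeding $a/2 \ge$ the top of $\Lambda$, no vertical Reeb chord survives, and $\Lambda$ is displaced. Keeping track of the precise speed needed to clear height $a/2$ within unit time — it suffices that $c$ exceed $a$, and one has room to make $c = a + \epsilon$ — is the only quantitative point, and it is elementary.
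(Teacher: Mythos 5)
There is a genuine gap, and it is at the very first step: translating $\Lambda$ in the $z$-direction (i.e.\ pushing it along the Reeb flow) does \emph{not} displace it, no matter how far you translate. A Reeb chord between $\Lambda$ and $\phi^1(\Lambda)$ is a vertical segment joining a point of $\Lambda$ to a point of $\phi^1(\Lambda)$ with the \emph{same} Lagrangian projection to $\R^{2n}_{x,y}$; its length is unconstrained, so ``clearing the $z$-extent of $\Lambda$'' is irrelevant. Displacement is equivalent to making $\Pi(\Lambda)$ and $\Pi(\phi^1(\Lambda))$ disjoint, where $\Pi\colon \R^{2n+1}\to\R^{2n}$ is the Lagrangian projection. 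Your flow is exactly $c\,\partial_z$ on the region where $H\equiv c$, hence preserves $\Pi(\Lambda)$ there, and every point $p\in\Lambda$ is joined to its image by a Reeb chord of length $c$; the cutoffs do not repair this. This is also why the observation that a constant Hamiltonian has zero oscillation is not a near-miss to be fixed by compact support: the constant Hamiltonian generates the Reeb flow itself and simply never displaces, consistently with the lower bound $\|H_t\|_{\OP{osc}}\ge a$ from Theorem \ref{thm:main}. So the quantitative bookkeeping in your second and fourth paragraphs (points escaping upward past $z=a/2$, monotonicity of $z$ for $H\ge 0$) establishes something that has no bearing on the absence of Reeb chords.

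The paper's proof has to work harder precisely because the displacing motion must happen transversally to the Reeb direction. It first applies the contact isotopy $(\mathbf{x},\mathbf{y},z)\mapsto(\mathbf{x},(1-t)\mathbf{y},(1-t)z)$, whose contact Hamiltonian is $H_t=-\tfrac{z}{1-t}$; restricted to $\{|z|\le a/2\}$ this costs total oscillation $sa<a$ and squeezes $\Lambda$ into an arbitrarily thin neighborhood of the Lagrangian disc $\{\mathbf{x}\in D^n,\ \mathbf{y}=0,\ z=0\}$ (this rescaling is where the $z$-extent $a$ genuinely enters the energy count). It then displaces that thin neighborhood from $D^n\times\R^n\times\R$ by lifting to $J^1\R^n$ an ordinary smooth isotopy of $\R^n$ displacing $D^n$; the lifted contact Hamiltonian vanishes along the zero section, so near it the oscillation is at most $\delta$, and the Lagrangian projections become disjoint, which is what kills all Reeb chords. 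Your sharpness discussion for Example \ref{ex:standard} is fine, but it presupposes the displacement construction, which as written does not displace; to repair the argument you would need to replace the vertical translation by a mechanism that moves the Lagrangian projection off itself at small cost, which is exactly the shrink-then-displace scheme above.
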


Theorem \ref{thm:main} result follows from a more technical Barcode Proposition \ref{prp:Barcode}, adapted from persistence homology literature.
The following non-squeezing result also follows from the Barcode Proposition.

Consider an open subset $U \subset Y^{2n+1}$ having the homotopy type of a closed $n$-manifold. We say that a Legendrian $\Lambda \subset Y$ can be {\bf squeezed} into $U$ if there exists a contact isotopy $\phi^t$ of the ambient space such that
\begin{itemize}
\item $\phi^1(\Lambda) \subset U;$ and
\item $[\Lambda] \in H_n(U;\Z_2)$ is nonzero.
\end{itemize}

Stabilized Legendrians \cite[Section 3]{EES05b} of dimension at least two are loose
and satisfy an $h$-principle due to Murphy \cite{Murphy:Loose}; in particular, they are $C^0$-dense in the space of Legendrian embeddings. By this $h$-principle, a loose Legendrian 
$\Lambda'$ 
can be squeezed into $U$ whenever this subset contains a formal Legendrian embedding in the same formal Legendrian isotopy class as $ \Lambda'.$

\begin{thm}
\label{thm:NonSqueeze}
Let $\Lambda_{\OP{stab}} \subset \R^{2n+1}$ be a stabilized Legendrian, and suppose for $\Lambda' \subset \R^{2n+1}$ that $\mathcal{A}(\Lambda')$ admits an augmentation. Then $\Lambda'$ cannot be squeezed into a standard contact neighborhood of $\Lambda_{\OP{stab}}.$
\end{thm}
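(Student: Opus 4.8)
The plan is to argue by contradiction: if $\Lambda'$ could be squeezed into a standard contact neighborhood $U$ of the stabilized Legendrian $\Lambda_{\OP{stab}}$, we will produce a contact Hamiltonian of arbitrarily small oscillation that displaces $\Lambda_{\OP{stab}}$, contradicting the non-displaceability information carried by the Chekanov--Eliashberg algebra of $\Lambda_{\OP{stab}}$ via the Barcode Proposition \ref{prp:Barcode}. The mechanism is to transport the augmentation of $\mathcal{A}(\Lambda')$ across the squeezing isotopy, combine it with the $h$-principle for loose Legendrians, and feed the result into Theorem \ref{thm:main} applied to $\Lambda_{\OP{stab}}$.

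Concretely, first I would make precise what ``standard contact neighborhood of $\Lambda_{\OP{stab}}$'' means: by the Legendrian neighborhood theorem it is contactomorphic to a neighborhood of the zero section in $J^1(\Lambda_{\OP{stab}})$, which one may take to be arbitrarily $C^1$-small, hence contained in a slab $\{|z| \le \epsilon/2\}$ in the relevant jet-space coordinates after rescaling; in particular the Reeb chords of any Legendrian contained in $U$ have length at most $\epsilon$. Second, suppose $\phi^t$ is an ambient contact isotopy with $\phi^1(\Lambda') \subset U$ and $[\Lambda'] \neq 0$ in $H_n(U;\Z_2)$. Since the Chekanov--Eliashberg algebra and the existence of an augmentation are invariant under Legendrian (hence ambient contact) isotopy, $\mathcal{A}(\phi^1(\Lambda'))$ also admits an augmentation; so without loss of generality we may assume $\Lambda' \subset U$ already, with $\mathcal{A}(\Lambda')$ augmented and $[\Lambda']$ the generator of $H_n(U;\Z_2) \cong H_n(\Lambda_{\OP{stab}};\Z_2) \cong \Z_2$.

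Third — and this is the crux — I would apply Theorem \ref{thm:main} to the \emph{stabilized} Legendrian $\Lambda_{\OP{stab}}$, not to $\Lambda'$. The point is that $\Lambda_{\OP{stab}}$ and $\Lambda'$ both lie in $U$ and represent the same nonzero class, so by the $h$-principle (Murphy's theorem, together with the density statement recalled before the theorem) $\Lambda'$ is formally Legendrian isotopic inside $U$ to $\Lambda_{\OP{stab}}$ and in fact $\Lambda_{\OP{stab}}$ can be squeezed into an arbitrarily small neighborhood of $\Lambda'$ inside $U$; reversing roles, after a further small contact isotopy we may assume $\Lambda_{\OP{stab}}$ itself is $C^0$-close to $\Lambda'$ and still lies in the slab $\{|z|\le \epsilon/2\}$. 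Now I invoke Lemma \ref{lma:sharp}: there is a compactly supported contact Hamiltonian of oscillatory norm $\epsilon + \epsilon' $ displacing $\Lambda_{\OP{stab}}$. Composing with the squeezing isotopy that brought a fixed reference copy of $\Lambda_{\OP{stab}}$ (a stabilized Legendrian admits an augmentation only below a finite action level $l<\infty$, with $l$ bounded above by the length of the stabilization chord, which we have arranged to be $\le\epsilon$) we obtain an ambient contact isotopy of oscillation $< l$ whose time-one map has no Reeb chords between the reference $\Lambda_{\OP{stab}}$ and its image — but Theorem \ref{thm:main}, applied with $i$ chosen so that $\sum_{j=0}^i \dim H_{\iota_j}(\Lambda_{\OP{stab}};\kk) \ge 1$ (always possible since $H_0 \ne 0$), forces at least one such chord whenever $\|H_t\|_{\OP{osc}} < \min\{l,\sigma_0\}$. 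The hard part will be organizing these isotopies so that the total oscillation is genuinely pushed below $\min\{l,\sigma_0\}$ while simultaneously keeping every copy of $\Lambda_{\OP{stab}}$ inside the thin slab where Lemma \ref{lma:sharp} applies; this is a bookkeeping argument about composing contact Hamiltonians and using that $\epsilon,\epsilon'$ can be taken as small as we like, but it is where one must be careful that $\sigma_0(\Lambda_{\OP{stab}})$ — which can be made large by the point-constraint trick of Remark \ref{rmk:pointconstraints} and Example \ref{ex:stabilized} — does not itself shrink with $\epsilon$. Once the contradiction $1 \le 0$ is reached, no such squeezing isotopy can exist, proving the theorem.
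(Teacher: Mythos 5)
Your strategy has a genuine structural flaw: the contradiction you aim for is with Theorem \ref{thm:main} applied to $\Lambda_{\OP{stab}}$ itself, and in that final step the augmentation of $\Lambda'$ plays no essential role (you only ``transport'' it and never feed it into the barcode argument). Any proof of this shape would therefore apply equally well with $\Lambda'$ replaced by $\Lambda_{\OP{stab}}$, which sits inside its own standard neighborhood and represents the nonzero class in $H_n(U;\Z_2)$ — so it would ``prove'' a false statement. Concretely, the mechanism also fails quantitatively: for a stabilized Legendrian the admissible action level $l$ in Theorem \ref{thm:main} is capped by the length of the stabilization chord (you even arrange $l\le\epsilon$), while the displacement you construct has oscillation at least of order $\epsilon+\epsilon'$, so you never get below the threshold $\min\{l,\sigma_0\}$; indeed stabilized Legendrians genuinely are displaceable with small oscillation (cf.\ Theorem \ref{thm:DisplaceLoose}), so no contradiction can ever be extracted from displacing $\Lambda_{\OP{stab}}$ cheaply. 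Two further problems: Lemma \ref{lma:sharp} concerns the global slab $\{|z|\le a/2\}$ in $\R^{2n+1}$, not the fiber direction of the jet-space neighborhood, and the fixed $\Lambda_{\OP{stab}}$ does not lie in a thin ambient slab; and the ``reversing roles'' step via Murphy's $h$-principle gives no control whatsoever on the oscillation of the isotopy it produces, so the ``bookkeeping'' you defer is not bookkeeping but the missing substance.

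The paper's argument runs in the opposite direction and uses both hypotheses where they bite. One first places $\Lambda_{\OP{stab}}$ (loose, so the $h$-principle applies to \emph{it}) in the position of Lemma \ref{lma:looseposition}: there is a Legendrian fiber $F$ of the ambient jet-space structure such that $\Lambda_{\OP{stab}}$ — hence anything in a sufficiently small standard neighborhood — can be displaced from $F$ by a contact isotopy of oscillation $<\delta$, while the chords from the neighborhood to $F$ split into two clusters whose actions differ by an arbitrarily large $A$. The hypothesis $[\Lambda']\neq 0$ in $H_n(U;\Z_2)$ is used to make the projection $\Lambda'\to\Lambda_{\OP{stab}}$ of odd degree, so each cluster of mixed chords between $\Lambda'$ and $F$ has odd cardinality; the augmentation of $\mathcal{A}(\Lambda')$ is used to define $LCC_*^{\infty}(\Lambda'\cup F)$ at all action levels. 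Odd-dimensionality of the short-action subcomplex forces, via Lemma \ref{lma:barcode}, a bar of length at least $A$, and then Proposition \ref{prp:Bifurcation}, Proposition \ref{prp:Barcode} and Lemma \ref{lma:main} show such a bar cannot die under a displacement of oscillation $\delta<A$ — the actual contradiction. If you want to salvage your approach, the fix is to aim the persistence argument at the pair $(\Lambda',F)$ rather than at $\Lambda_{\OP{stab}}$.
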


There has been recent interest in $C^0$-limits of Hamiltonian diffeomorphisms (contact or symplectic) \cite{BuhovskyHumiliereSeyfaddini, RosenZhang}.
In particular, Rosen and Zhang prove that for the $C^0$-limit of the images of a Legendrian submanifold under a sequence of contactomorphisms, if the limit is smooth and the sequence satisfies \cite[Definition 4.1]{RosenZhang}, then the limit is also Legendrian. The definition in particular requires the sequence of contactomorphisms to have uniformly converging conformal factors. In \cite[Remark 1.5]{RosenZhang} the authors suggest that the latter mechanism prevents such a sequence from being an ``approximation by zig-zags'' (recall that e.g.~any, not even necessarily Legendrian, knot can be approximated by a Legendrian knot by adding more and more zig-zags). Theorem \ref{thm:NonSqueeze} could be interpreted as evidence that something stronger actually prevents this, as it shows that a stabilized Legendrian cannot be approximated by any Legendrian that admits an augmentation.

The homology condition in the definition is crucial. In $\R^{2n+1}$ all Legendrians can be put in a neighborhood of any other Legendrian by first rescaling it to make it sufficiently small. Also, the 2-copy of $\Lambda_{\OP{stab}}$ sitting in an arbitrary small neighborhood of $\Lambda_{\OP{stab}},$ \ has non-zero $\Z$-homology but vanishing $\Z_2$-homology. The 2-copy has an augmentation which one can see either by an explicit construction of an exact Lagrangian filling by a cylinder $I \times \Lambda_{\OP{stab}}$ or, alternatively, by an explicit calculation of the Chekanov--Eliashberg algebra of the two-copy using the theory from \cite{Ekholm:Duality}.

Using this $h$-principle to approximate a non-Legendrian deformation of the initial Legendrian by the stabilized version we prove Theorem \ref{thm:DisplaceLoose} below, 
in contrast to Theorem \ref{thm:main} above.

\begin{thm}
\label{thm:DisplaceLoose}
Consider the 2-sphere $\Lambda_{\OP{St}}(1) \subset \R^5.$
Fix any $1 \gg \delta >0.$
After making the sphere loose by adding a stabilization contained inside a sufficiently small neighborhood of an arbitrary point in $\Lambda_{\OP{St}}(1),$ it is displaceable by a contact Hamiltonian of oscillation less than $\delta.$
\end{thm}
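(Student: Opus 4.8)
The plan is to exploit Murphy's $h$-principle \cite{Murphy:Loose} to isotope the stabilized sphere, through a Legendrian isotopy of arbitrarily small oscillation, into a position in which it is displaced off itself at no further cost. The argument is a genuine contrast to Theorem~\ref{thm:main}: after stabilizing, the sphere is loose and $\mathcal{A}(\Lambda_{\OP{stab}})$ carries no augmentation, so nothing obstructs small-energy displacement. Concretely, I would fix the stabilization inside a small ball $\mathcal{O}_p$ around $p$, obtaining a loose Legendrian $\Lambda_{\OP{stab}}$ with loose chart inside $\mathcal{O}_p$, and recall that $\Lambda_{\OP{stab}}$ then satisfies a parametric, $C^0$-dense $h$-principle.

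Next I would construct an auxiliary smooth \emph{non}-Legendrian isotopy $\tilde\varphi^t\colon\Lambda_{\OP{stab}}\hookrightarrow\R^5$ with $\tilde\varphi^0=\OP{incl}$, whose time-one image lies in the region where the $x_1$-coordinate exceeds $\max_{\Lambda_{\OP{stab}}}x_1$ (for instance, shrink $\Lambda_{\OP{stab}}$ toward a point while translating it in the $x_1$-direction), and which moreover has small ``formal oscillation''
\[
\int_0^1\Big(\max_{\Lambda_{\OP{stab}}}\alpha_{\OP{std}}(\partial_t\tilde\varphi^t)-\min_{\Lambda_{\OP{stab}}}\alpha_{\OP{std}}(\partial_t\tilde\varphi^t)\Big)\,dt<\tfrac{\delta}{2}.
\]
The point is that for a non-Legendrian isotopy one may prescribe the $x$- and $z$-components of $\partial_t\tilde\varphi^t$ independently, and so keep the density $\alpha_{\OP{std}}(\partial_t\tilde\varphi^t)=\partial_t z-\sum_i y_i\,\partial_t x_i$ nearly constant along $\Lambda_{\OP{stab}}$: the variation one would otherwise incur from collapsing the $z$-extent is absorbed into the $\sum_i y_i\,\partial_t x_i$ term. (For a \emph{genuine} Legendrian isotopy this collapse is impossible cheaply --- a contact rescaling of $\Lambda_{\OP{St}}(1)$ has oscillation at least its Reeb-chord length $1$ --- which is exactly why the unstabilized statement fails.)

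The heart of the proof is to promote $\tilde\varphi^t$ to a genuine Legendrian isotopy of small oscillation. Since $\Lambda_{\OP{stab}}$ is loose, the $h$-principle yields a Legendrian isotopy $\varphi^t$ with $\varphi^0=\OP{incl}$ and $\varphi^1(\Lambda_{\OP{stab}})$ $C^0$-close to $\tilde\varphi^1(\Lambda_{\OP{stab}})$, and what I would have to establish is that the approximation can be performed with
\[
\int_0^1\Big(\max_{\Lambda_{\OP{stab}}}\alpha_{\OP{std}}(\partial_t\varphi^t)-\min_{\Lambda_{\OP{stab}}}\alpha_{\OP{std}}(\partial_t\varphi^t)\Big)\,dt<\delta.
\]
This is where I expect the difficulty to lie: the oscillation is a $C^1$-type quantity while Murphy's approximation is a priori only $C^0$, so one must verify that the high-frequency corrections built into the $h$-principle (convex-integration/holonomic-approximation type wiggles, localized near the loose chart) sweep a Lagrangian area comparable to their $C^0$-amplitude and hence change the integral above by an arbitrarily small amount.

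It remains to read off the statement. After replacing the generating family of $\varphi^t$ by an indefinite one --- which, as recalled in the introduction, amounts to an irrelevant $z$-translation and does not change the oscillation --- the isotopy $\varphi^t$ extends to a compactly supported ambient contact isotopy $\phi^t_{\alpha_{\OP{std}},H_t}$ of $\R^5$ with $\|H_t\|_{\OP{osc}}<\delta$. Its time-one image $\phi^1(\Lambda_{\OP{stab}})=\varphi^1(\Lambda_{\OP{stab}})$ is $C^0$-close to $\tilde\varphi^1(\Lambda_{\OP{stab}})$, so its $x_1$-coordinates still exceed those of $\Lambda_{\OP{stab}}$; hence the Lagrangian projections to $P=\R^4$ of $\Lambda_{\OP{stab}}$ and of $\phi^1(\Lambda_{\OP{stab}})$ are disjoint, there is no Reeb chord between them, and $\phi^1$ displaces $\Lambda_{\OP{stab}}$. (Alternatively, once $\Lambda_{\OP{stab}}$ is compressed into a small enough ball one can finish with Lemma~\ref{lma:sharp} together with a translation that is cheap because the $y$-coordinates of the shrunk copy are tiny; either way the total oscillation stays below $\delta$.) This proves the theorem.
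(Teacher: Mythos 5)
You have correctly identified where the real difficulty sits, but you have not resolved it, and that is a genuine gap rather than a technicality. The oscillation
$\int_0^1\bigl(\max_{\Lambda}\alpha_{\OP{std}}(\partial_t\varphi^t)-\min_{\Lambda}\alpha_{\OP{std}}(\partial_t\varphi^t)\bigr)dt$
of the Legendrian isotopy produced by Murphy's $h$-principle is a $C^1$-in-time quantity, and $C^0$-closeness of $\varphi^t(\Lambda_{\OP{stab}})$ to your guide $\tilde\varphi^t(\Lambda_{\OP{stab}})$ gives no control over it: the wrinkling/zig-zag moves in Murphy's construction may oscillate the front rapidly in time, and a spatially $C^0$-small but fast family of fronts has unbounded oscillation. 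Your proposed remedy (``verify that the high-frequency corrections sweep a Lagrangian area comparable to their $C^0$-amplitude'') is not something the $h$-principle supplies, and establishing such a quantitative refinement would essentially be the theorem itself. There is also a structural problem with applying any such estimate to your particular guide: since $\tilde\varphi^1$ is a macroscopic translate in $x_1$, the approximating Legendrian isotopy cannot be confined to a small region, so no localization trick can rescue the estimate for it.

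The paper never estimates the oscillation of the $h$-principle isotopy at all; this is the missing idea. It first constructs (Steps 1--3 of its proof) a $C^0$-small \emph{non-Legendrian} push-off $\widetilde\Lambda_t$ of a Legendrian isotopy from $\Lambda_{\OP{St}}(1)$ to a displaceable sphere $\Lambda'$, arranged so that the Lagrangian projection $\Pi(\widetilde\Lambda_t)$ is a \emph{symplectic} surface in an annulus; Lemma \ref{lma:sympiso} (a Hamiltonian generating an isotopy of symplectic surfaces can be taken to vanish along the surfaces, hence be uniformly small after cut-off) then moves $\widetilde\Lambda_0$ to $\widetilde\Lambda_1\approx\Lambda'$ by an ambient contact isotopy of arbitrarily small oscillation, and the final displacement of $\Lambda'$ is also cheap. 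Murphy's $h$-principle is invoked only \emph{inside a standard jet-space neighborhood} $T^*_{\le\epsilon}S^2\times[-\epsilon,\epsilon]$ of $\Lambda_{\OP{St}}(1)$, to take $\Lambda_{\OP{loose}}$ to a Legendrian $C^0$-approximating $\widetilde\Lambda_0$ (using \cite{Dan} and uniqueness of the formal class), and its a priori uncontrolled cost is tamed by \emph{conjugating the whole confined isotopy with the fiberwise rescaling} $(\mathbf q,\mathbf p,z)\mapsto(\mathbf q,\lambda\mathbf p,\lambda z)$: this rescaling multiplies the contact form, hence $\alpha(\partial_t\varphi^t)$ and the oscillation, by $\lambda$, while shrinking into and expanding out of the $\lambda\epsilon$-neighborhood costs only about $2(1-\lambda)\epsilon$ by the explicit computation from Lemma \ref{lma:sharp}. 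In short: all macroscopic motion is delegated to ambient contact isotopies whose Hamiltonians are controlled by other means, and the flexible ($h$-principle) motion is kept $C^0$-small and made cheap by rescaling conjugation rather than by any quantitative version of Murphy's theorem. Without this (or an equivalent) mechanism, your argument does not close, although the remaining ingredients of your outline --- the cheap non-Legendrian guide, extension of a Legendrian isotopy to an ambient contact isotopy, and displacement via disjoint Lagrangian projections --- are sound.
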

Our proof is specific to the this Legendrian $\Lambda_{\OP{St}}(1).$ However, we expect that the techniques can be adapted to show the statement also for arbitrary Legendrians.

\subsection{Related results}
Persistence homology has made some recent inroads in symplectic geometry, some of which we list here. 
 Polterovich and Shelukhin introduced the technique of persistence homology in Floer homology in their work \cite{Polterovich:Persistence}, which included measuring the bottle-neck distance between Hamiltonian Floer complexes. This was later generalized to the Floer-Novikov complex which by Usher and Zhang \cite{UsherZhang}.
Polterovich, Shelukhin and Stojisavljevic introduce a new structure to persistence homology for Floer theory, coming from the quantum cup product \cite{PolterovichShelukhinStojisavljevic}.
Buhovsky, Humiliere and Seyfaddini \cite{BuhovskyHumiliereSeyfaddini} 
apply the barcodes technology of Kislev and Shelukhin
\cite{KislevShelukhin} to study Hamiltonian (non-smooth) homeomorphisms.

All of the above are in the symplectic setting, focusing on quantitative results for Hamiltonians.
There are fewer quantitative results for Reeb dynamics.

Entov and Polterovich prove that there is a Reeb chord between a Legendrian and its image under the Reeb flow of a different contact form that defines the same co-orientated contact structure, assuming that the Hamiltonian restricted to the Legendrian has a small oscillation \cite[Remark 1.14]{EntovPolterovich}.
They are currently developing a barcode approach to this result. 
Their chord can be viewed as a flow from the Legendrian to itself gotten by taking the Reeb flow first followed with the contact flow, whereas our chords in Theorem \ref{thm:main} are gotten by taking the contact flow first followed with the Reeb flow.
We prove a lower bound on the number of Reeb chords similar to that of Theorem \ref{thm:main} \cite{DimitroglouRizellSullivan}. In this earlier article, 
our bound uses the conformal factor and is like Chekanov's bound \cite{Chekanov}: the lower bound is either the full sum of Betti numbers, or 0 (we do not measure its persistence). In the symplectic setting a similar refined count of intersection points was obtained in \cite{FukayaDisplacement} by Fukaya, Oh, Ohta and Ono.

%\gdr The action threshold $l>0,$ below which the Chekanov--Eliashberg algebra has an augmentation, is related to the invariant $\Psi(L)$ introduced by Shelukhin, Tonkonog and Vianna for Lagrangian submanifolds in \cite{SymplecticFlux}. This inviariant is defined using the $A_\infty$-structure of the Lagrangian Floer homology; it can be reinterpreted in the case of the curved the $A_\infty$-algebra obtained by dualizing the Chekanov--Eliashberg algebra (see \cite{BourgeoisChantraine}), it then corresponds to the action level below which the DGA has a canonical augmentation.\blk

\subsection{Acknowledgments}
We would like to thank Michael Entov for the above comparison of results and Lenny Ng for Remark \ref{rmk:Lenny}. We are also in debt to the referee for doing a very careful reading, finding mistakes, and giving valuable suggestions. The first author is supported by grant KAW 2016.0198 from the Knut and Alice Wallenberg Foundation.
The second author is supported by grant 317469 from the Simons Foundation and thanks the Centre de Recherches Mathematiques for hosting him while some of this work was done.

\section{Barcodes of filtered complexes with action windows}
\label{sec:Barcodes}
We consider an abstract setup of filtered complexes that we later will apply to the setting of Legendrian contact homology. First we define a notion of a piecewise continuous (PWC) family of filtered complexes with action window parametrized by time $t \in \R$. Roughly speaking, the complexes have a continuously varying finite action window and the generators of the complex can leave and enter this window. Observe that these bifurcations need not preserve the homology of the complex. We adapt the theory of barcodes from persistent homology theory (see e.g., \cite[Section 1]{UsherZhang} and its references) to this setting. 

Throughout this section we will always assume our complexes to be finite-dimensional $\kk$-vector spaces.

\subsection{PWC families of complexes with action window}
\label{sec:ContFamily}

%The general setting of complexes and bifurcations has the following abstract description. The main objects of interest are one-parameter families $C(t)^{b_t}_{a_t}$ of chain complexes with a real-valued action filtration and action window $[a_t,b_t) \subset \R.$ We make these notions precise below.

Fix $t \in \R. $ A {\bf filtered complex} $C(t)^{b_t}_{a_t}$ {\bf with action window $[a_t,b_t)\subset \R$} is a finite-dimensional $\kk$-complex with action function
$$\ell_t \colon C(t)^{b_t}_{a_t} \to [a_t,b_t) \cup \{-\infty\}$$
that induces the filtration by subcomplexes of the form $\{x; \:\: \ell_t(x) < c\} \subset C(t)^{b_t}_{a_t}$ for any number $c \in \R$.

Given any sub-interval $[a'_t,b'_t) \subset [a_t,b_t)$ there is an induced filtered complex 
$$C(t)^{b'_t}_{a'_t} \coloneqq \{x \in C(t)^{b_t}_{a_t};\:\: \ell(x)<b'_t\}/\{x \in C(t)^{b_t}_{a_t};\:\: \ell(x)<a'_t\}$$
with action window $[b'_t,a'_t)$ obtained as a suitable quotient complex.

%\begin{dfn}
A basis $e_1,\ldots,e_m$ is {\bf compatible with the filtration} if the action filtration on any arbitrary element is given by the formula
$$\ell_t(r_1e_1+\ldots+r_me_m)=\max_{r_i \neq 0}\ell_t(e_i)$$
in terms of the actions of the basis elements.
%\end{dfn} 

\begin{rmk}In our application, the complexes are endowed with a geometrically induced compatible basis which consists of Reeb chords, whose elements have action equal to their Reeb chord lengths.
\end{rmk}

\begin{lma}[Barannikov decomposition]
\label{lma:compatiblebasis}
There always exists a (non-canonical) choice of basis which is compatible with the filtration. Furthermore, given any compatible basis, there exists an action-preserving change of basis (i.e.~which is upper-triangular when the basis elements are ordered by non-decreasing action) such that the following holds.
\begin{itemize}
\item A subset of the basis yields a basis for the cycles and, thus, the differential applied to its complementary subset yields a basis for the boundaries.
\item The differential of a basis element is either zero or itself a basis element.
\end{itemize}
\end{lma}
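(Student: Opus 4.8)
The plan is to prove this in two stages, mirroring the two sentences of the statement. For the existence of \emph{some} compatible basis, I would work with the finite collection of values $v_1 < v_2 < \cdots < v_N$ attained by the action function $\ell_t$ on nonzero elements (together with $-\infty$). The filtration subspaces $F_{<c} = \{x;\ \ell_t(x) < c\}$ jump only at the $v_j$, so one gets a finite flag $0 = F_0 \subsetneq F_1 \subsetneq \cdots \subsetneq F_N = C(t)$ where $F_j = F_{\le v_j}$. Choosing, inductively, a basis of each $F_j$ extending a chosen basis of $F_{j-1}$ produces a basis $e_1,\ldots,e_m$ with the property that the number of basis elements with $\ell_t(e_i) \le v_j$ equals $\dim F_j$. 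One then checks the formula $\ell_t(\sum r_i e_i) = \max_{r_i \ne 0} \ell_t(e_i)$: the inequality $\le$ is immediate since the filtration subspaces are subcomplexes (in fact just subspaces) and $\sum r_i e_i$ lies in the smallest $F_j$ containing all the $e_i$ with $r_i \ne 0$; the reverse inequality holds because if $\ell_t(\sum r_i e_i)$ were strictly smaller, the top basis vector appearing in the sum would lie in a strictly smaller filtration level than it was constructed to occupy, contradicting the extension property. This gives a compatible basis.

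For the second, substantive part, I would fix a compatible basis $e_1, \ldots, e_m$ ordered so that $\ell_t(e_1) \le \cdots \le \ell_t(e_m)$ and run the standard Barannikov/Morse-complex reduction, which is essentially Gaussian elimination performed compatibly with the filtration. Define, for a basis element $e_i$ with $\partial e_i \ne 0$, its \emph{pivot} $\mathrm{low}(e_i)$ to be the largest index $j$ such that $e_j$ appears with nonzero coefficient in the expansion of $\partial e_i$ in the current basis; note $\ell_t(e_{\mathrm{low}(e_i)}) < \ell_t(e_i)$ by the action-decreasing property of $\partial$ recalled in the excerpt. Process the basis elements in order of increasing index; whenever two elements $e_i$ and $e_{i'}$ with $i < i'$ have the same pivot $j = \mathrm{low}(e_i) = \mathrm{low}(e_{i'})$, replace $e_{i'}$ by $e_{i'} - (\text{coeff. ratio})\, e_i$, an action-preserving (upper-triangular) change of basis since $\ell_t(e_i) \le \ell_t(e_{i'})$. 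Iterating, all pivots of elements with nonzero differential become distinct. At the end, the elements with $\partial e_i \ne 0$ have distinct pivots; the collection of these pivots $e_j$ are linearly independent boundaries; every $e_i$ that is not such a pivot and has $\partial e_i = 0$ is a cycle; and one argues by a dimension count (cycles $=$ $m$ minus the number of distinct pivots, which equals the number of non-cycle basis elements) that the non-pivot cycles together with the pivots span the cycles. Rescaling each non-cycle basis element $e_i$ by the inverse of its pivot coefficient arranges $\partial e_i = e_{\mathrm{low}(e_i)}$ exactly, giving both bulleted conclusions: a basis subset spanning the cycles, and $\partial$ sending each basis element either to $0$ or to another basis element.

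The one point requiring genuine care — the main obstacle — is verifying that the column operations can always be carried out in an \emph{action-preserving, upper-triangular} manner and that they terminate. The key observations making this work are: (i) the pivot of $e_i$ always sits at a strictly lower action than $e_i$ itself, so the reduction never needs to move ``upward'' in action; (ii) when clearing a repeated pivot between $e_i$ and $e_{i'}$ with $i<i'$, the correction $e_{i'} \mapsto e_{i'} - \lambda e_i$ only involves $e_i$ with $\ell_t(e_i)\le \ell_t(e_{i'})$, hence is upper-triangular and, by the compatibility formula, action-preserving; and (iii) each such operation strictly decreases, in a fixed lexicographic sense, the multiset of pivot indices, so the procedure halts after finitely many steps. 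Assembling these, the composite change of basis is upper-triangular and preserves all action values, which is exactly what the lemma demands; the remaining bookkeeping (the dimension count identifying the span of the cycles, and the final rescaling) is routine.
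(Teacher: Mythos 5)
Your construction of a compatible basis (choosing bases adapted to the finite flag of filtration subspaces) is the same argument as the paper's, which phrases it via cokernels of the inclusions $\iota_{c,c+\epsilon}$; and for the canonical form you give the explicit column-reduction argument where the paper simply cites \cite[Lemma 2]{Barannikov}, so your route is more self-contained there. The reduction itself (pivots strictly below in action, clearing repeated pivots by upper-triangular, action-preserving operations, termination) is correct, and it does yield distinct pivots, a basis of the cycles by the zero columns, and linearly independent boundaries.

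However, the final step has a genuine gap: rescaling each non-cycle $e_i$ by the inverse of its pivot coefficient does \emph{not} arrange $\partial e_i = e_{\mathrm{low}(e_i)}$, because a fully reduced column may still have nonzero entries \emph{below} its pivot. For instance, with $\partial e_3 = e_2 + e_1$ and $\partial e_1=\partial e_2=0$ the matrix is already reduced, yet no rescaling of $e_3$ makes its boundary a basis element. So the second bullet of the lemma is not established as written. The fix is a further change of basis: for each non-cycle $e_i$ with pivot $j=\mathrm{low}(e_i)$, replace $e_j$ by $\partial e_i$ (up to the pivot coefficient). This substitution only adds terms of strictly smaller index, hence is upper-triangular and action-preserving, and distinct pivots guarantee each $e_j$ is replaced at most once; but to see that it does not disturb the rest of the structure you also need the (omitted) fact that in the reduced basis every pivot index has \emph{zero} column, i.e.\ each $e_j$ being replaced is itself a cycle. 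This follows from $\partial^2=0$: writing $\partial e_i=\sum_{r\le j}c_r e_r$ with $c_j\neq 0$, applying $\partial$ gives a vanishing linear combination of the reduced nonzero columns $\partial e_r$, which are linearly independent because their pivots are distinct, forcing $\partial e_j=0$. With that lemma and the substitution added, your argument closes and proves both bullets.
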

\begin{proof}

Consider the filtration $C^c = \{ x; \:\: \ell_t(x) < c\}$ by subcomplexes and the inclusion maps

$$\iota_{c,c+\epsilon} \colon C^{c} \to C^{c+\epsilon}.$$
For all but finitely many action levels $c \in \R$ the cokernel of $\iota_{c,c+\epsilon}$ is empty whenever $\epsilon>0$ is sufficiently small. 
This cokernel can be (non-canonically) identified with a subspace of $C^{c+\epsilon}$ generated by elements of action precisely equal to $c$. 
The compatible basis can then be taken as the union of the elements from the bases of these cokernels.

Any compatible basis can be made into a ``Barannikov decomposition," also called ``canonical form'', by a basis change which is upper-triangular; we refer to \cite[Lemma 2]{Barannikov} for the details.
\end{proof}

Before arriving at the final description of the type of families of complexes that we want to study, we begin with the following intermediate definition. A family of complexes $C(t)^{b_t}_{a_t}$ defined on an interval $t \in I \subset \R$ is said to be a {\bf continuous family of filtered complexes (with action window)} if it satisfies the following.
\begin{itemize}
\item The underlying (unfiltered) complexes in $I$ are all canonically identified. %in each component of $\R \setminus \{t_1,\ldots,t_K\}.$ 
\item There is a fixed basis for the complexes (under the identification in the previous bullet point) which remains compatible for all filtrations in $I.$ %There is a family of bases for the above complexes, fixed in each component under the identification in the previous bullet point, such that the basis for a given component of $\R \setminus \{t_1,\ldots,t_K\}$ remains compatible for all the different filtrations in the same component.
\item The action $\ell_t$ on each element in the compatible basis, as a function of $t \in I$ is continuous and almost everywhere differentiable.
\item The starting and endpoints of the action window $[a_t,b_t)$ vary continuously with $t \in I.$
\end{itemize}
Note that the \emph{filtered} isomorphism type of the complex may change in a continuous family.
For example, the relative action of two (compatible) basis elements may change signs.

We then define a {\bf piecewise continuous} (PWC) {\bf family of filtered complexes (with action window)} to be a family of complexes parametrized by $\R$, which is continuous on each of the finitely many connected components of $\R \setminus \{t_1,\ldots,t_K\}$ for some $t_1<\ldots<t_K$, while it satisfies the following.
\begin{itemize}
\item The endpoints of the action window $a_t, b_t \in \R$ vary continuously on all of $\R.$
\item The differential $\partial_t$ is strictly action-decreasing in the sense that the action of $\partial_t(x)$ is \emph{strictly} less than that of $x$ for all $t \in \R \setminus \{t_1,\ldots,t_K\}.$
\end{itemize}
In addition, we prescribe the following behavior at the singular moments $t_1<\ldots <t_K.$ For $\epsilon <\min_{j \ne k} |t_j -t_k|,$ let $\{I_0, I_1\} = \{(t_i, t_i+ \epsilon), (t_i - \epsilon, t_i)\}$ denote two intervals near $t_i$ with their assignments unspecified.
We require that precisely one of the following {\bf simple bifurcations} takes place at each $t_i.$
\begin{itemize}
\item {\bf Handle-slide: } The differential of the complex is changed by an action preserving handle-slide, i.e.~with conjugation by an invertible matrix which is upper-triangular with respect to any basis that is compatible with the filtration.
\item {\bf Birth/death: } There is a continuously varying family $\tilde{C}(t)^{b_t}_{a_t}$ of filtered complexes defined for all $t \in (t_i-\epsilon,t_i+\epsilon)$ and a splitting
$$(C(t_i)^{b_{t_i}}_{a_{t_i}},\partial_{t_i})=(\tilde{C}(t_i)^{b_{t_i}}_{a_{t_i}},\tilde{\partial}_{t_i}) \oplus (S,\partial_S)$$
 into a direct sum of filtered complexes (where the total complex is endowed with the induced filtration), with $S=\kk x \oplus \kk y,$ $\ell_{t_i}(x)=\ell_{t_i}(y)$ and $\partial_S(x)=y$, such that the following holds. 
For $t \in I_0,$ $({C}(t)^{b_t}_{a_t}, \partial_t)$ continuously extends to $(C(t_i)^{b_{t_i}}_{a_{t_i}},\partial_{t_i})$ and for $t \in I_1,$ $({C}(t)^{b_t}_{a_t}, \partial_t)$ continuously extends to $(\tilde{C}(t_i)^{b_{t_i}}_{a_{t_i}},\tilde{\partial}_{t_i}).$
%For $I=[t_i-\epsilon,t_i)$ or $I=(t_i,t_i+\epsilon],$ $(C(t_i)^{b_{t_i}}_{a_{t_i}},\partial_{t_i})$ is a continuous familyC(t)^{b_t+\epsilon}_{a_t} of filtered complexes when restricted to $I$ and $[t_i-\epsilon,t_i+\epsilon] \setminus I$, and moreover $C(t_i)^{b_{t_i}}_{a_{t_i}}=\tilde{C}(t)^{b_t}_{a_t}$ holds for all $t \in I.$
\item {\bf Generator enters/exits at the bottom: } 
Possibly after shrinking $\epsilon>0$, there exists an extension of the family of complexes to a family $(C(t)^{b_t}_{a_t-\epsilon},\partial_{t})$ that depends continuously on $t \in (t_i-\epsilon,t_i+\epsilon)$ and for which there is a canonical identification
$$(C(t)^{b_t}_{a_t},\partial_{t}) = (C(t)^{b_t}_{a_t-\epsilon},\partial_{t})/C(t)^{a_t}_{a_t-\epsilon}$$
with the quotient complex. Further, the quotient map has a one-dimensional kernel for $t \in I_0$ while it is an isomorphism for $t \in I_1.$
\item {\bf Generator enters/exits at the top: }
Possibly after shrinking $\epsilon>0$, there exists an extension of the family of complexes to a family $(C(t)^{b_t+\epsilon}_{a_t},\partial_{t})$ for some $\epsilon>0$ that depends continuously on $t \in (t_i-\epsilon,t_i+\epsilon)$ for which there is a canonical identification
$$(C(t)^{b_t}_{a_t},\partial_{t}) \subset (C(t)^{b_t+\epsilon}_{a_t},\partial_{t})$$
of subcomplexes. Further, this inclusion has a one-dimensional cokernel for $t \in I_0$ while it is an isomorphism for $t \in I_1.$
\end{itemize}

In the standard setup of persistent homology, invariance is studied for complexes that are related by action-preserving ``continuation maps." In particular, continuation chain maps induce isomorphisms on homology. Because we allow generators to enter or exit the action window, our induced homology maps might not be isomorphisms, at which point we replace the standard continuation maps with quotients or inclusions of complexes.

%Here we recall the definition of the barcode associated to a filtered complex.

%We can associate a barcode to the filtered complex
%${\ms{(C(t)^{b_t}_{a_t},\partial_t)}}$ 
%in the following manner. 

\subsection{The barcode}

In Definition \ref{dfn:barcode}, Lemma \ref{lma:barcode} and Lemma \ref{lma:BarcodeRecovery} below, we fix the parameter $t.$ 
The filtration induces an inclusion
$C(t)^{c_0}_{a_t} \hookrightarrow C_*(t)^{c_1}_{a_t}$
of complexes for each $c_0 \le c_1 \le b_t.$
Denote the induced maps on homology by
$$\phi_{c_0,c_1} \colon H(C(t)^{c_0}_{a_t}) \to H(C(t)^{c_1}_{a_t}).$$
The maps satisfy
$$ \phi_{c,c}=\id_{H(C(t)^{c}_{a_t})} \:\: \text{and} \:\: \phi_{c_1,c_2} \circ \phi_{c_0,c_1} = \phi_{c_0,c_2}.$$
\begin{dfn}
\label{dfn:barcode}
A {\bf barcode} is a finite collection of subsets of $\R$, each an interval of the form $[s,e)$, called {\bf bars}. We call $s\in \R$ the bar {\bf starting point}, $e \in \R \cup \{+\infty\}$ the bar {\bf endpoint}, and $e-s > 0$ the bar {\bf length}. A bar $B$ {\bf persists at action level $l \in \R$} if $l \in B$. 
%By the \bf{restriction of the barcode to the window $[a_t,b_t)$} we mean the the collection $\mathscr{B}_{a(t)}^{b(t)}$ of intervals obtained from the intervals in $\mathscr{B}$ by taking intersection with $[a_t,b_t)$. [[Perhaps the latter will not be needed]]
The {\bf barcode of the complex $(C(t)^{b_t}_{a_t},\partial_t)$}, denoted by $\mathscr{B}(C(t)^{b_t}_{a_t},\partial_t),$ is the barcode uniquely characterized by the following properties.
\begin{itemize}
\item The number of bars with starting point $s$ is equal to the dimension of the quotient
$$ \OP{coker}( \phi_{s,s+\epsilon}) = H(C(t)^{s+\epsilon}_{a_t},\partial_t)/\im\,\phi_{s,s+\epsilon}$$
where $\epsilon>0$ is any sufficiently small number.
\item The number of bars with starting point $s$ that persist at action level $l\ge s$ is equal to the dimension of the subspace
$$ [\phi_{s+\epsilon,l+\epsilon}](\OP{coker}( \phi_{s,s+\epsilon})) \subset H(C(t)^{l+\epsilon}_{a_t},\partial_t)/\im\,\phi_{s,l+\epsilon}$$
where $\epsilon>0$ is any sufficiently small number and where the map
$$[\phi_{s+\epsilon,l+\epsilon}] \colon \OP{coker}( \phi_{s,s+\epsilon}) \to H(C(t)^{l+\epsilon}_{a_t},\partial_t)/\im\,\phi_{s,l+\epsilon} $$
is induced by descending $\phi_{s+\epsilon,l+\epsilon}$ to the quotients.
\end{itemize}
\end{dfn}
%{\ms It is clear that the barcode only depends on the system of maps $\phi_{c_0,c_1}$ between the filtered homology groups and not of any choice of basis. Recall the {\bf{spectral invariant}} for $\alpha \ne 0 \in H(C(t)^{l+\epsilon}_{a_t},\partial_t),$
%is defined by
%$$ c(\alpha) = \min_{{[\sum a_i x_i]=\alpha}} \ell(\sum a_i x_i).$$}
We begin with some basic facts and characterizations of barcodes that will facilitate their computation, especially in conjunction with the Barannikov decomposition from Lemma \ref{lma:compatiblebasis}. 
\begin{lma}
\label{lma:barproperties}
\begin{enumerate}
\item Two complexes which are related by an action preserving chain isomorphism have the same barcode.
\item The barcode of a direct sum of two filtered complexes is the union of the respective barcodes.
\item A one-dimensional filtered complex spanned by the element $c$ (necessarily a cycle) has a barcode which consists of a single (semi-)infinite bar $[\ell(c),+\infty) \subset \R.$
\item The acyclic complex spanned by two generators $c_0,c_1$ satisfying $\ell(c_0)<\ell(c_1)$ and $\partial(c_1)=c_0$ has a barcode which consists of a single bar $[\ell(c_0),\ell(c_1)) \subset \R.$
\end{enumerate}
\end{lma}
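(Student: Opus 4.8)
The plan is to verify each of the four items directly from Definition \ref{dfn:barcode}, leveraging the uniqueness characterization stated there. For item (1), if $\Phi\colon(C,\partial)\to(C',\partial')$ is an action-preserving chain isomorphism, then it restricts to isomorphisms $C^c_{a}\to (C')^c_{a}$ for every $c$, and these are compatible with the inclusion maps; hence the induced maps $\phi_{c_0,c_1}$ on homology are intertwined by the isomorphisms $H(C^c_a)\cong H((C')^c_a)$. Consequently $\OP{coker}(\phi_{s,s+\epsilon})$ and the subspaces $[\phi_{s+\epsilon,l+\epsilon}](\OP{coker}(\phi_{s,s+\epsilon}))$ have the same dimensions on both sides, so the two barcodes agree by the uniqueness clause of Definition \ref{dfn:barcode}.

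For item (2), I would observe that the action filtration on $C\oplus C'$ is the direct sum of the two filtrations, so $(C\oplus C')^c_a = C^c_a \oplus (C')^c_a$, and homology commutes with direct sums over a field, giving $H((C\oplus C')^c_a) = H(C^c_a)\oplus H((C')^c_a)$ with $\phi^{\oplus}_{c_0,c_1} = \phi_{c_0,c_1}\oplus \phi'_{c_0,c_1}$. Cokernels and images of block-diagonal maps split as direct sums, so the dimension counts in both bullets of Definition \ref{dfn:barcode} add, which is exactly the statement that the barcode is the union (with multiplicity) of the two barcodes. For items (3) and (4) I would just compute directly. In case (3), $C^c_{\ell(c)}$ is zero for $c\le\ell(c)$ and equals $\kk c$ (one-dimensional, a cycle) for $c>\ell(c)$; so $H(C^c)$ jumps from $0$ to $\kk$ as $c$ crosses $\ell(c)$, the map $\phi_{s,s+\epsilon}$ has one-dimensional cokernel precisely at $s=\ell(c)$, and $\phi_{\ell(c)+\epsilon,l+\epsilon}$ is an isomorphism for all $l\ge\ell(c)$, so the unique bar is $[\ell(c),+\infty)$. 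In case (4) with $\ell(c_0)<\ell(c_1)$ and $\partial c_1=c_0$: for $c\le\ell(c_0)$ the truncation is $0$; for $\ell(c_0)<c\le\ell(c_1)$ it is $\kk c_0$, which is a cycle, so $H=\kk$; for $c>\ell(c_1)$ it is the full acyclic complex, so $H=0$. Thus a bar is born at $s=\ell(c_0)$ (cokernel of $\phi_{s,s+\epsilon}$ is one-dimensional there), it persists for $l<\ell(c_1)$ since $H(C^{l+\epsilon})=\kk$ and the relevant quotient map is an isomorphism, and it dies at $l=\ell(c_1)$ since $H(C^{l+\epsilon})=0$ there; the unique bar is $[\ell(c_0),\ell(c_1))$.

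The genuinely delicate point — and the one I would be most careful about — is not any single computation but pinning down that the second bullet of Definition \ref{dfn:barcode} is being read correctly when checking persistence, i.e.\ that "the number of bars with starting point $s$ persisting at level $l$" really is $\dim [\phi_{s+\epsilon,l+\epsilon}](\OP{coker}(\phi_{s,s+\epsilon}))$ inside the quotient $H(C^{l+\epsilon}_{a})/\im\,\phi_{s,l+\epsilon}$, and that in cases (3) and (4) this descended map is injective on the one-dimensional cokernel for the claimed range of $l$ and becomes zero past it. This requires checking that the class of $c$ (resp.\ $c_0$) generating the cokernel at birth maps to a nonzero class in the appropriate quotient exactly for $l$ below the death time. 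In case (4), for $l\ge\ell(c_1)$ one has $H(C^{l+\epsilon}_a)=0$, so the image is trivially zero; for $\ell(c_0)\le l<\ell(c_1)$ the map $\phi_{s,l+\epsilon}$ is the zero map out of $0$, so the quotient is all of $\kk$ and $[c_0]$ is nonzero there. Once these inclusions/quotients are tracked honestly, everything reduces to the elementary linear algebra above, so the main obstacle is bookkeeping rather than any conceptual difficulty.
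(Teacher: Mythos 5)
Your proposal is correct and follows the same route as the paper, whose proof of this lemma is simply the observation that all four items follow by direct verification from Definition \ref{dfn:barcode}; your explicit computations of the truncations, cokernels, and descended maps are exactly the bookkeeping that verification entails.
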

\begin{proof}
This follows from the above definition.
\end{proof}

\begin{lma}
\label{lma:barcode}
\begin{enumerate}
\item Take any choice of basis for the complex $(C(t)^{b_t}_{a_t},\partial_t)$ which is compatible with the filtration, and assume that $\partial_t$ is strictly action decreasing. The total number of bars in $\mathscr{B}(C(t)^{b_t}_{a_t},\partial_t)$ which have either a starting point or endpoint at level $l\in \R$ equals the number of basis elements of precisely that action.
%\item The number of bars that persist at action level $l \in \R$ is equal to
%$\dim H(C_*(t)^{l+\epsilon}_{a(t)},\partial_t)$
%for any sufficiently small number $\epsilon>0$; and [[[combine with 3]]]
%\item The number of bars that persist at action level $l$ is equal to $\dim H(C(t)^{l+\epsilon}_{a_t},\partial_t)$, for $\epsilon>0$ sufficiently small.
\item The number of bars that persist at action level $l \in \R$ and which have starting points located strictly below the level $c \le l$ is equal to the dimension of the image
$$ \phi_{c,l+\epsilon}(H(C(t)^{c}_{a_t},\partial_t)) \subset H(C(t)^{l+\epsilon}_{a_t},\partial_t)
$$
for $\epsilon>0$ sufficiently small.
\end{enumerate}
\end{lma}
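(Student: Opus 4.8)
\textbf{Proof plan for Lemma \ref{lma:barcode}.}

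The plan is to reduce both statements to the Barannikov decomposition of Lemma \ref{lma:compatiblebasis} together with the additivity of barcodes under direct sums (Lemma \ref{lma:barproperties}(2)) and the explicit computation of barcodes of the two elementary building blocks (Lemma \ref{lma:barproperties}(3) and (4)). First I would apply Lemma \ref{lma:compatiblebasis} to the given compatible basis to obtain an action-preserving, upper-triangular change of basis putting $(C(t)^{b_t}_{a_t},\partial_t)$ into canonical form; by Lemma \ref{lma:barproperties}(1) this does not change the barcode, and by hypothesis $\partial_t$ remains strictly action-decreasing so the new basis is still compatible. In canonical form the complex splits as an orthogonal-with-respect-to-action direct sum of one-dimensional cycle summands $\kk c$ and two-dimensional acyclic summands $\kk c_0 \oplus \kk c_1$ with $\partial c_1 = c_0$ and $\ell(c_0)<\ell(c_1)$ (the strict inequality coming precisely from strict action-decrease, so that no acyclic summand is "instantaneous").

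For part (1), I would then invoke Lemma \ref{lma:barproperties}(2) to write $\mathscr{B}(C(t)^{b_t}_{a_t},\partial_t)$ as the disjoint union of the barcodes of these summands. By Lemma \ref{lma:barproperties}(3) each cycle summand $\kk c$ contributes one bar with starting point $\ell(c)$ and no endpoint, and by Lemma \ref{lma:barproperties}(4) each acyclic pair contributes one bar with starting point $\ell(c_0)$ and endpoint $\ell(c_1)$. Hence the total number of bar-endpoints (starting points counted once, finite endpoints counted once) at a given level $l$ equals the number of basis elements $c_0$, $c_1$, or $c$ with action exactly $l$ — and since the change of basis was action-preserving, this count agrees with the count in the original compatible basis. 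This is essentially a bookkeeping argument once the decomposition is in hand.

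For part (2), I would use the characterization of persistence in Definition \ref{dfn:barcode}: a bar $[s,e)$ persists at level $l$ iff $l\in[s,e)$, and by the computation above a bar of the canonical form has $s<c$ and persists at $l$ precisely when either it is an infinite bar $[\ell(c),\infty)$ with $\ell(c)<c$, or it is a finite bar $[\ell(c_0),\ell(c_1))$ with $\ell(c_0)<c\le l<\ell(c_1)$. Counting these, I would argue they biject with a basis of $\phi_{c,l+\epsilon}(H(C(t)^c_{a_t}))$: in the canonical form the homology $H(C(t)^c_{a_t})$ has a basis given by the classes of those cycle generators $c$ with $\ell(c)<c$ together with those $c_0$ whose partner $c_1$ has not yet appeared, i.e. $\ell(c_0)<c\le\ell(c_1)$; pushing forward to $H(C(t)^{l+\epsilon}_{a_t})$ kills exactly those $c_0$ with $\ell(c_1)\le l$ (for $\epsilon$ small), leaving exactly the generators indexing bars that persist at $l$ and start strictly below $c$. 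The main subtlety — and the step I expect to require the most care — is matching Definition \ref{dfn:barcode}'s cokernel-based bookkeeping of starting points with the canonical-form description, i.e. checking that in canonical form $\OP{coker}(\phi_{s,s+\epsilon})$ is spanned exactly by the images of those generators of action $s$ that are cycles not hit by the differential, and that the descended maps $[\phi_{s+\epsilon,l+\epsilon}]$ behave as the naive picture suggests; once this dictionary between Definition \ref{dfn:barcode} and the Barannikov summands is established, both claims are immediate.
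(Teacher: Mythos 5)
Your proposal is correct and follows essentially the same route as the paper: pass to a Barannikov canonical form via Lemma \ref{lma:compatiblebasis}, note the action-preserving basis change does not affect the barcode or the count of basis elements per action level, and then reduce both counts to the elementary one- and two-dimensional summands using Lemma \ref{lma:barproperties}, with part (2) verified summand-by-summand on the induced decompositions of the subcomplexes $C(t)^{c}_{a_t} \subset C(t)^{l+\epsilon}_{a_t}$. The extra care you flag about matching Definition \ref{dfn:barcode}'s cokernel bookkeeping with the canonical-form picture is exactly the routine verification the paper leaves implicit.
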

\begin{proof}
We use Lemma \ref{lma:compatiblebasis} to obtain a Barannikov decomposition for the complex $(C(t)^{b_t}_{a_t},\partial_t)$.

(1): The number of basis elements at each action level remains unchanged by the filtration preserving basis change carried out in Lemma \ref{lma:compatiblebasis}. The statement now follows from Lemma \ref{lma:barproperties}: the Barannikov composition exhibits the complex as a direct sum of filtered complexes of dimension at most two, where the two-dimensional complexes are acyclic.

(2): Any Barannikov decomposition for $(C(t)^{b_t}_{a_t},\partial_t)$ induces a Barannikov decomposition on the subcomplexes
$C(t)^{c}_{a_t} \subset C(t)^{l+\epsilon}_{a_t} \subset C(t)^{b_t}_{a_t}$
 by taking appropriate subsets of the basis elements.

The Barannikov decomposition provides a filtered isomorphism between $C(t)^{c}_{a_t}$ and a direct sum of filtered complexes of dimension at most two, where the two-dimensional complexes are acyclic. Since Part (2) is easily seen to hold for any such summand, Parts (1) and (2) of Lemma \ref{lma:barproperties} again show the full claim.
\end{proof}

The following proposition tells us that the properties that are considered in Lemma \ref{lma:barcode} are sufficient to recover the entire barcode.
\begin{lma}
\label{lma:BarcodeRecovery}
The barcode can be recovered from the following data:
\begin{enumerate}
\item the set $c_1<c_2<\ldots<c_k$ of values which is given by the union of action levels of the starting points and endpoints of all bars in the barcode; and
\item the number of bars which persist at level $l_i\in\R$ and start at level $c_j \in \R$, for each $1 \le j \le i \le k$, where we write $l_i=(c_{i+1}+c_i)/2 \in \R$, $i=1,2,\ldots,k-1$ and $l_k=c_k+1.$ 
\end{enumerate}
\end{lma}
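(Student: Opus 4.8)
The plan is to read Lemma~\ref{lma:BarcodeRecovery} as a purely combinatorial inversion statement: the multiset of bars is pinned down by the data in (1)--(2) through a telescoping (inclusion--exclusion) argument in the endpoint index. First I would record the possible shapes of bars. By the very definition of the set in (1), the starting point of any bar lies in $\{c_1,\dots,c_k\}$ and its endpoint lies in $\{c_1,\dots,c_k\}\cup\{+\infty\}$; since a bar $[s,e)$ satisfies $e>s$, every bar is of the form $[c_j,c_m)$ with $1\le j<m\le k$, or $[c_j,+\infty)$ with $1\le j\le k$. Let $\mu_{j,m}$ (resp.\ $\mu_{j,\infty}$) denote the number of bars equal to $[c_j,c_m)$ (resp.\ $[c_j,+\infty)$). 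Recovering the barcode is then the same as recovering all of these nonnegative integers from the given data.

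Next I would express the quantities supplied by (2) in terms of the $\mu$'s. Write $N_{j,i}$ for the number of bars that persist at $l_i$ and start at $c_j$, provided for all $1\le j\le i\le k$. The key point is that the probe levels $l_i$ are chosen strictly inside the gaps $(c_i,c_{i+1})$, and $l_k=c_k+1$ strictly above $c_k$, so that no bar endpoint coincides with any $l_i$ and the incidence is unambiguous: a bar $[c_j,e)$ with $j\le i$ automatically has $c_j\le c_i<l_i$, while $l_i<e$ holds precisely when $e=+\infty$, or $e=c_m$ with $m\ge i+1$ in the case $i<k$ (and only when $e=+\infty$ in the case $i=k$). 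Consequently
\[
N_{j,i}=\mu_{j,\infty}+\sum_{m=i+1}^{k}\mu_{j,m}\ \ (j\le i<k),
\qquad
N_{j,k}=\mu_{j,\infty}.
\]
Inverting these relations from the top index downwards yields $\mu_{j,\infty}=N_{j,k}$ and $\mu_{j,m}=N_{j,m-1}-N_{j,m}$ for each $j<m\le k$; one checks that every $N_{j,i}$ that occurs here has $1\le j\le i\le k$, so it is indeed among the data provided in (2). This recovers every multiplicity, hence the barcode, which is exactly the assertion (two barcodes with the same data (1)--(2) coincide).

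I expect no serious obstacle: the whole content is the choice of the probe levels $l_i$ in the open gaps (and $l_k=c_k+1$), which makes the relation ``$[c_j,e)$ persists at $l_i$'' depend only on whether $j\le i$ and on which of $c_{i+1},\dots,c_k,+\infty$ the endpoint $e$ equals; after that it is bookkeeping, the only care needed being the half-open convention for bars and the verification that the telescoping stays within the provided index range $1\le j\le i\le k$. In the intended application the input data itself is read off from the complex via Lemma~\ref{lma:barcode} --- item (1) from the actions of a filtration-compatible basis, and the counts in (2) from the image dimensions $\dim\phi_{c,l_i+\epsilon}\bigl(H(C(t)^{c}_{a_t})\bigr)$ appearing there --- but that step is logically separate from the recovery statement being proved here.
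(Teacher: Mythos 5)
Your proposal is correct and follows essentially the same route as the paper's proof: probe levels in the gaps make persistence depend only on whether the endpoint exceeds $c_i$, and the multiplicities are recovered by the telescoping $\mu_{j,m}=N_{j,m-1}-N_{j,m}$ and $\mu_{j,\infty}=N_{j,k}$, which is exactly the paper's difference-of-persistence-counts argument, just written with explicit multiplicity notation.
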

\begin{proof}
For each fixed $1 \le j \le k$, the number of bars which start at $c_j$ and end at $c_i$ is equal to the difference between the number of such bars which start at $c_j$ and persist at $l_i$ and the number of bars which start at $c_j$ and persist at $l_{i-1}$. The number of infinite bars that start at $c_j$ is equal to the number of such bars which persist at $l_k$. Obviously this information can be deduced from the data in (1) and (2). Since we can do this analysis for all $j=1,\ldots,k$, it is thus possible to recover the entire barcode.
\end{proof}

\begin{prp}[Barcode Proposition]
\label{prp:Barcode}
Consider a PWC family of filtered complexes with action window, $C(t)^{b_t}_{a_t},$ that undergoes only simple bifurcations.

When the complex undergoes no such bifurcation, the barcode undergoes a continuous change of action levels for its starting and endpoints. %(Recall that the endpoints of bars correspond either to actions of generators or the upper threshold for the action window.)

At the bifurcations the barcode undergoes the following corresponding changes.
\begin{itemize}
\item {\bf Handle-slide: } The barcode is unaffected.
\item {\bf Birth/death: } When two generators $x,y$ undergo a birth/death, then a bar connecting $\ell(x)$ to $\ell(y)$ is added to/removed from the barcode. (The bar is not present at the exact time of the birth/death, but immediately after/before it is visible and of arbitrarily short length.)
\item {\bf Exit below: } A generator slides below the action level $a_t$ at time $t.$ If the uniquely determined \footnote{This bar is uniquely determined near the exit moment by Part (1) of Lemma \ref{lma:barcode} and the definition of an exit below.} bar which starts at the action level of that generator is of finite length, then that bar gets replaced with a bar of infinite length whose starting point is located at the same action level as the endpoint of the original bar. If the bar has infinite length, then it simply disappears from the barcode.
\item {\bf Entry below: } This is the same as a exit below but in backwards time.
\item {\bf Exit above: } A generator slides beyond the action level $b_t$ at time $t.$ There is a uniquely determined bar which either ends or starts at the action level of that generator. In the first case, the bar gets replaced with one that has the same starting point but which is of infinite length. In the second case, when the bar necessarily is infinite, then that bar simply disappears from the barcode.
\item {\bf Entry above: } This is the same as an exit above, but in backwards time.
\end{itemize}
\end{prp}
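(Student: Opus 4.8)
The plan is to analyze each bifurcation type separately, reducing everything to the Barannikov-decomposition technology of Lemma~\ref{lma:compatiblebasis} together with the explicit barcode computations in Lemmas \ref{lma:barproperties}, \ref{lma:barcode}, and \ref{lma:BarcodeRecovery}. The overall strategy is: in a continuous family, pick the fixed compatible basis guaranteed by the definition, apply the Barannikov algorithm (it can be carried out fiberwise and, away from moments where two basis actions coincide, the upper-triangular change of basis varies continuously), and conclude from Lemma~\ref{lma:barcode}(1) that the endpoints of the bars track the (continuously varying) actions of basis elements; this gives the ``no bifurcation'' claim. Crossings where two relative actions change sign are harmless because they are handled by a handle-slide type argument — the paired/unpaired status in the Barannikov decomposition is unchanged, only the ordering of actions is. So the first step is to make precise that between singular moments the barcode is determined by the Barannikov pairing, which is locally constant, while the bar endpoints move continuously.

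For the individual bifurcations I would argue as follows. \textbf{Handle-slide:} conjugation by an upper-triangular (with respect to a compatible basis) invertible matrix is an action-preserving chain isomorphism, so Lemma~\ref{lma:barproperties}(1) gives invariance of the barcode directly. \textbf{Birth/death:} using the splitting $(C,\partial)=(\tilde C,\tilde\partial)\oplus(S,\partial_S)$ with $S=\kk x\oplus\kk y$, $\partial_S x=y$, Lemma~\ref{lma:barproperties}(2) says the barcode is the disjoint union of $\mathscr{B}(\tilde C)$ and $\mathscr{B}(S)$; on the side $I_0$ where the complex is $C$ the summand $S$ contributes, by Lemma~\ref{lma:barproperties}(4), a single finite bar $[\ell(y),\ell(x))$ (which collapses to a point at $t_i$ and is short nearby), and on the side $I_1$ where the complex is $\tilde C$ this bar is absent, while $\mathscr{B}(\tilde C)$ varies continuously across $t_i$. \textbf{Exit/entry below:} here one uses the extension $(C(t)^{b_t}_{a_t-\epsilon},\partial_t)$ and the short exact sequence $0\to C(t)^{a_t}_{a_t-\epsilon}\to C(t)^{b_t}_{a_t-\epsilon}\to C(t)^{b_t}_{a_t}\to 0$; picking a Barannikov decomposition of the larger complex, the generator $g$ leaving the window sits in a summand of dimension $\le 2$, and I would do the case analysis on whether $g$ is unpaired (contributes an infinite bar starting at $\ell(g)$, which after quotienting simply disappears), paired with a higher generator $g'$ still inside the window (the bar $[\ell(g),\ell(g'))$ becomes, after quotienting, the infinite bar $[\ell(g'),+\infty)$ — matching ``endpoint of the original bar becomes the new start''), or paired with a lower generator (impossible since $g$ is lowest by the definition of exit below, or the paired generator also exits simultaneously, excluded by genericity). \textbf{Exit/entry above:} dual argument using the inclusion $C(t)^{b_t}_{a_t}\subset C(t)^{b_t+\epsilon}_{a_t}$ and the cokernel; a generator $g$ leaving through the top is either the target of a differential from a generator $g'$ still inside (the bar $[\ell(g'),\ell(g))$ becomes the infinite bar $[\ell(g'),+\infty)$) or itself unpaired/infinite (the bar disappears). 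The entry statements are all just the time-reversal, which I would note is legitimate because the axioms for a PWC family are symmetric under $t\mapsto -t$ with the roles of $I_0,I_1$ swapped.

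Once the individual local models are verified, I would assemble them: the barcode is determined at each $t$ by the local data of Lemma~\ref{lma:BarcodeRecovery}, and since between singular times that data varies continuously (bar counts are locally constant integers, endpoints move continuously) and at each $t_i$ exactly one of the listed discrete changes occurs, the global statement follows. It is worth remarking that at a singular moment $t_i$ one must check that no \emph{other} bar is affected — e.g.\ that a generator exiting below does not also participate, through the differential, in some other pair; this is exactly where the genericity of the bifurcation (``precisely one of the following simple bifurcations takes place at each $t_i$'') and the explicit direct-sum splittings in the definition are used to isolate the affected summand.

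I expect the main obstacle to be the exit/entry-below case, specifically bookkeeping the passage to the quotient complex $C(t)^{b_t}_{a_t-\epsilon}/C(t)^{a_t}_{a_t-\epsilon}$: one must verify that a Barannikov decomposition of the enlarged complex restricts to one of the quotient in a way that correctly predicts which finite bar gets ``promoted'' to an infinite bar and where its new starting point lands. The subtlety is that the generator $g$ leaving the window might be paired with a generator $g'$ whose bar $[\ell(g),\ell(g'))$ is, from the point of view of $C(t)^{b_t}_{a_t}$, invisible on the $I_0$ side but whose quotient behavior is exactly the claimed replacement. Checking that the Barannikov pairing of the quotient is inherited correctly — i.e.\ that killing the lowest generator in an acyclic two-dimensional summand leaves the partner as a surviving cycle — is the technical heart, and dualizing it for the exit-above case requires the analogous care with cokernels rather than quotients.
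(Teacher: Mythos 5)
Your proposal is correct and follows essentially the same route as the paper's proof: continuity away from the singular times via Lemmas \ref{lma:barcode} and \ref{lma:BarcodeRecovery}, the handle-slide and birth/death cases via Lemma \ref{lma:barproperties}, and the entry/exit cases by isolating, in a Barannikov decomposition, either a one-dimensional summand (unpaired generator) or a two-dimensional acyclic summand (paired generator) containing the generator that crosses the window boundary. The only blemish is a verbal slip in the exit-above case --- the exiting (highest-action) generator is the \emph{source}, not the target, of the differential pairing it with the surviving generator $g'$ --- but the bar replacement you write, $[\ell(g'),\ell(g))$ becoming $[\ell(g'),+\infty)$, is the correct one, so this does not affect the argument.
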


\begin{proof} 
Assume that the bifurcations occur at the finite subset of times $\{t_1,\ldots,t_K\} \subset \R$. Lemma \ref{lma:BarcodeRecovery} can be used to recover the barcode by the data supplied by Lemma \ref{lma:barcode}. Since the latter data varies continuously for any continuous change of action for a compatible basis, it follows that the barcode depends continuously on $t$ inside each component of $\R \setminus \{t_1,\ldots,t_K\}$.

We can then consider each bifurcation moment separately.

\begin{itemize}
\item {Handle-slide: } The effect of this bifurcation on the barcode is immediate from Part (1) of Lemma \ref{lma:barproperties}, since the bifurcation at a handle-slide moment is an action preserving chain isomorphism.

\item {Birth/death: } The effect of this bifurcation on the barcode can be deduced by Parts (2) and (4) of Lemma \ref{lma:barproperties}, since at the birth moment the complex is changed by taking the direct sum with an acyclic filtered complex of dimension two.
\item {Entry/exit above: } There are two possibilities for the bifurcation: either the filtered complex is changed by the addition/removal of a direct summand of dimension one, or the complex has an acyclic summand of dimension two in which the generator of greater action exits/enters at level $b_{t_i}$. One readily deduces the change of barcode by using Lemma \ref{lma:barproperties} in the two different cases.
\item {Entry/exit below: } Again there are two possibilities for the bifurcation: either the filtered complex is changed by the addition/removal of a direct summand of dimension one, or the complex has an acyclic summand of dimension two in which the generator of less action exits/enters at level $a_{t_i}$. Again one can apply Lemma \ref{lma:barproperties}. 
\end{itemize}
 \end{proof}

\section{Proofs of Theorem \ref{thm:main} and Lemma \ref{lma:sharp}}
\label{sec:main}

First we compare the oscillation and the change in Reeb chord length. Then we study a length-filtered invariance property for the linearized Chekanov--Eliashberg DGA. We apply it to a two-component Legendrian, which includes the two-copy link of a single Legendrian.
The infinite dimensional DGA has its disadvantages. So we construct a partial linearization of the Chekanov--Eliashberg DGA inside a finite action window, which is an associated finite-dimensional complex. The main point is that this linearization can be done even when there is no augmentation of the Chekanov--Eliashberg DGA. Proposition \ref{prp:Bifurcation} uses bifurcation analysis to show that when varying the geometric data, the partial linearizations form a PWC family of complexes with action window as defined in Section \ref{sec:Barcodes}. This allows us to apply the theory barcodes developed in the same section, in particular Proposition \ref{prp:Barcode}, which we then use to prove Theorem \ref{thm:main}. We end with the proof of Lemma \ref{lma:sharp}.

Denote by $\Lambda(t)$ a Legendrian isotopy parametrized by $t.$
Let $H_t$ be the contact Hamiltonian $H_t \colon Y \to \R$ generating an ambient contact isotopy inducing $\Lambda(t),$ and let $X_t$ denote the contact vector field. 

\subsection{Reeb chord length and oscillatory energy}

\label{sec:ReebAction}

The filtration properties depend on the size of the oscillation of the contact Hamiltonian inducing the Legendrian isotopy. The main contact geometric property that we need is the following.
\begin{lma}
\label{lma:main}
A smooth one-parameter family $c(t) \subset (Y,\alpha)$ of Reeb chords with endpoint and starting point $e(t), s(t) \in \Lambda(t)$ on a family of Legendrian submanifolds satisfies
\begin{equation}
\label{eq:contham}
\frac{d}{dt}\ell (c(t)) = \alpha(X_{e(t)}(t)) - \alpha(X_{s(t)}(t)).
\end{equation}
In particular $|\ell (c(0)) - \ell (c(1))| \le \|H_t\|_{\OP{osc}}.$
\end{lma}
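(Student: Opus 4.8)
The plan is to differentiate the action $\ell(c(t)) = \int_{c(t)} \alpha$ with respect to $t$ and identify the boundary terms. Parametrize the family of Reeb chords as a map $C \colon [0,1]_u \times (-\delta,\delta)_t \to Y$ so that for each fixed $t$, the curve $u \mapsto C(u,t)$ is the Reeb chord $c(t)$, traversed from $s(t) = C(0,t)$ to $e(t) = C(1,t)$ (reparametrized, say, to have constant speed, or simply so that $\partial_u C = \ell(c(t)) R_\alpha$; the precise normalization will not matter). The key structural input is that the chord's interior moves by the contact flow: since the Legendrian isotopy is induced by the ambient contact isotopy $\phi^t_{\alpha,H_t}$, we may arrange $C(u,t) = \phi^t_{\alpha,H_t}(C(u,0))$, so $\partial_t C = X_t \circ C$ along the whole chord.

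First I would write $\frac{d}{dt}\ell(c(t)) = \frac{d}{dt}\int_0^1 C_t^*\alpha = \int_0^1 \mathcal{L}_{\partial_t}(C^*\alpha)$ restricted to the slice, and apply Cartan's formula $\mathcal{L}_{X} = d\iota_X + \iota_X d$ to $\alpha$. The term involving $\iota_{X_t} d\alpha$ is the one to control: along a Reeb chord, the tangent direction is $R_\alpha$, which is annihilated by $d\alpha$. So contracting the pulled-back two-form $C^*d\alpha$ against $\partial_u$ (the chord direction) kills it, leaving only the exact piece. Concretely, $\frac{d}{dt}\ell(c(t)) = \int_0^1 \frac{\partial}{\partial u}\big(\alpha(X_t)\big)\big|_{C(u,t)}\, du + \int_0^1 d\alpha\big(X_t, \partial_u C\big)\, du$, and the second integral vanishes because $\partial_u C \parallel R_\alpha$ and $\iota_{R_\alpha} d\alpha = 0$. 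The first integral is a total $u$-derivative, so it evaluates to $\alpha(X_t)\big|_{e(t)} - \alpha(X_t)\big|_{s(t)} = \alpha(X_{e(t)}(t)) - \alpha(X_{s(t)}(t))$, which is \eqref{eq:contham}. (I should double-check the normalization: the endpoints $s(t), e(t)$ lie on $\Lambda(t)$ which moves by the contact flow, so $\partial_t C(0,t) = X_t(s(t))$ and similarly at $u=1$; this is consistent with the claim that the whole chord is transported by $X_t$, since the Reeb flow and contact flow images of a chord through a moving Legendrian still form such a family up to reparametrization, and reparametrization in $u$ does not affect $\int C_t^*\alpha$.)

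For the ``in particular'' clause, integrate \eqref{eq:contham} in $t$ from $0$ to $1$: $\ell(c(1)) - \ell(c(0)) = \int_0^1 \big(\alpha(X_{e(t)}(t)) - \alpha(X_{s(t)}(t))\big)\, dt$. Now recall that the contact Hamiltonian is defined by $H_t(y) = \alpha(X_t(y))$, so the integrand is $H_t(e(t)) - H_t(s(t))$, which is bounded in absolute value by $\max_Y H_t - \min_Y H_t$ for each $t$. Integrating gives $|\ell(c(1)) - \ell(c(0))| \le \int_0^1 (\max_Y H_t - \min_Y H_t)\, dt = \|H_t\|_{\OP{osc}}$, using the convention established earlier that identifies the various oscillation norms. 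This bound holds regardless of which endpoint values are larger.

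The main obstacle I anticipate is bookkeeping rather than conceptual: making precise the parametrization $C(u,t)$ so that the boundary terms genuinely are $\alpha(X_{e(t)})$ and $\alpha(X_{s(t)})$ and the interior term genuinely drops out — in particular verifying that one can choose the $u$-parametrization to be transported by the contact flow simultaneously with being (a reparametrization of) the Reeb chord at each time, and that any failure of this is absorbed by reparametrization invariance of $\int_0^1 C_t^*\alpha$. One also wants $\alpha(\partial_u C)$ to be independent of $u$ along each chord (true: it equals $\ell(c(t))$ times the constant speed, or is identically the chord length in the arc-length-scaled parametrization), so that the exactness argument is clean. None of this requires the ambient manifold to be of the special form $P \times \R$; the lemma is purely local along the chord.
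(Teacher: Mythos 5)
Your final formula and the first-variation skeleton are right, but the step you call the ``key structural input'' --- that one can arrange $C(u,t)=\phi^t_{\alpha,H_t}(C(u,0))$, so that $\partial_t C=X_t\circ C$ along the whole chord --- is false, and it cannot be absorbed by reparametrizing in $u$. A contactomorphism preserves $\ker\alpha$ but not the Reeb vector field (it scales $\alpha$ by a conformal factor), so $\phi^t_{\alpha,H_t}(c(0))$ is in general a genuinely different curve in $Y$ from the Reeb chord $c(t)$ of $\Lambda(t)$, not a reparametrization of it; the two conditions you impose on $C$ are incompatible. The same issue recurs at the boundary: $e(t)$ is the point where the chord meets the moving Legendrian, so writing $e(t)=\phi^t_{\alpha,H_t}(p(t))$ with $p(t)\in\Lambda$ one gets $\frac{d}{dt}e(t)=X_{e(t)}(t)+d\phi^t(p'(t))$, where the second term is an unknown vector tangent to $\Lambda(t)$ (the endpoint slides along the Legendrian); your asserted identity $\partial_t C(0,t)=X_t(s(t))$ therefore fails in general.

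The computation is salvageable precisely because neither claim is needed. Taking $V\coloneqq\partial_t C$ to be the honest variation field, the interior term $\int_0^1 d\alpha(V,\partial_u C)\,du$ vanishes for the reason you already gave --- $\partial_u C$ is proportional to $R_\alpha$ and $\iota_{R_\alpha}d\alpha=0$ --- with no hypothesis on $V$; and the boundary contribution is $\alpha(V(1,t))-\alpha(V(0,t))$, which equals $\alpha(X_{e(t)}(t))-\alpha(X_{s(t)}(t))$ because at each endpoint $V-X_t$ is tangent to $\Lambda(t)$ and $\alpha$ vanishes on $T\Lambda(t)$. This appeal to the Legendrian condition is exactly where the hypothesis of the lemma enters, and it is missing from your argument (without it the asserted endpoint identification, and hence the formula, would be unjustified). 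For comparison, the paper sidesteps the endpoint bookkeeping entirely: it pulls the family back by $(\phi^t_{\alpha,H_t})^{-1}$, notes that the pulled-back chords are critical points of $\gamma\mapsto\int_\gamma(\phi^t_{\alpha,H_t})^*\alpha$ among paths with endpoints on the fixed $\Lambda$ (so the $t$-derivative hits only the integrand), and then uses Cartan's formula, the vanishing of $\iota_{X(t)}d\alpha$ along Reeb chords, and Stokes. Your ``in particular'' step, integrating \eqref{eq:contham} and bounding via $H_t=\alpha(X_t)$ by $\|H_t\|_{\OP{osc}}$, is fine.
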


\begin{proof}
Cartan's formula gives us
$$\frac{d}{dt}(\phi^t_{\alpha,H_t})^*\,\alpha =(\phi^t_{\alpha,H_t})^*(\iota_{X(t)}d\alpha+d\iota_{X(t)}\alpha).$$
Using this we compute
$$ \frac{d}{dt} \int_{c(t)}\alpha=\frac{d}{dt}\int_{(\phi^t_{\alpha,H_t})^{-1}\circ c(t)}(\phi^t_{\alpha,H_t})^*\,\alpha=\int_{(\phi^t_{\alpha,H_t})^{-1}\circ c(t)}\frac{d}{dt}(\phi^t_{\alpha,H_t})^*\,\alpha=\int_{c(t)} d\iota_{X(t)}\alpha.$$
For the second equality we have use the fact that a Reeb chord $(\phi^t_{\alpha,H_t})^{-1}\circ c(t)$ with endpoints on Legendrian $\Lambda$ are critical points for the functional $\gamma \mapsto \int_\gamma (\phi^t_{\alpha,H_t})^*\,\alpha$ (with $t$ fixed). For the last equality we combine Cartan's formula with the fact that the one-form $\iota_{X(t)}d\alpha$ pulls back to zero on any Reeb chord (by the definition of the Reeb vector field).
\end{proof}

\subsection{Handle-slides and birth/deaths for the DGA}
\label{sec:HSBD}

Lemma \ref{lma:main} uses only elementary calculus and applies to a Legendrian isotopy in any contact manifold $Y.$
Henceforth, however, we need to assume as stated in the introduction, that
$(Y, \xi) = (P\times \R, \ker\{ dz+\lambda\}).$ 

Given the Legendrian isotopy $\Lambda(t)$ and constants $0 \le t_- \le t_+ \le 1,$ denote the stable-tame DGA morphism constructed in \cite{EES07} by
$$\Phi_{t_-, t_+} \colon (\mathcal{A}(\Lambda(t_-)),\partial_{t_-}) \to (\mathcal{A}(\Lambda(t_+)),\partial_{t_+}).$$

A generic Legendrian isotopy has isolated singular moments during which exactly one of the following occurs: a unique rigid index $-1$ disk appears ({\bf handle-slide}); a unique pair of Reeb chords appears/cancels ({\bf birth/death}); or, the relative actions of two Reeb chords changes signs.

 Let here (and elsewhere) $\delta^x_y$ be 0 if $x \ne y,$ and $\delta^x_y$ be some unit in $\kk$ if $x=y.$
Also, let $c^+$ be the chord in $\Lambda(t + \epsilon)$ representing the image of the Reeb chord $c^-$ of $\Lambda(t - \epsilon)$ under the isotopy. 

If the handle-slide disk $u \in \mathcal{M}_A(a, b_1 \cdots b_k)$ exists at time $t$ then by \cite{EES05b, EES07} the induced DGA morphism for $\epsilon>0$ arbitrarily small is
\begin{equation}
\label{eq:HandleSlide}
\Phi_{t - \epsilon, t+\epsilon}(c^-) = c^+ + \delta^{a^-}_{c^-} b_1^+ \cdots b_k^+.
\end{equation}

We extend the definition of {\bf{length}} from Section \ref{sec:Introduction}. For any non-zero element in the algebra $x\in\mathcal{A}(\Lambda),$ let 
$\ell(x) \in [0,+\infty)$
 be the maximum of sums of lengths of Reeb chords in a nonzero word of Reeb chord generators that appears in $x.$ 
We use here that we have a canonical basis of $ \mathcal{A}(\Lambda)$ given by the words of Reeb chords. 
Stokes' Theorem implies
$$ \ell(a^\pm) 
\ge
 \ell(b_1^\pm\cdots b_k^\pm).$$

We review the induced algebraic continuation map at a birth-moment in the proof of \cite[Lemma 2.13]{EES05b} below.
Suppose $a^+,b^+$ are the newly-born pair of points at time $t$ which exists at time $t+\epsilon$ with $|a^+| = |b^+|+1.$
For all sufficiently small $\epsilon>0$, we can assume that the other $(l+k) \ge 0$ chords satisfy
\[
\ell(a_k^\pm) > \ldots > \ell(a_1^\pm) > \ell(a^+) > \ell(b^+)
> \ell(b_l^\pm) > \ldots > \ell(b_1^\pm)
\]
for all $2^{k+l}$ possible assignments of signs $\pm.$ 

Let $S(\mathcal{A}(\Lambda(t - \epsilon)))$ denote the DGA-stabilization of 
$\mathcal{A}(\Lambda(t - \epsilon)).$
Recall from \cite{EES05b} this means we append to $\mathcal{A}(\Lambda(t - \epsilon))$ the (``artificial") generators $a^-$ and $b^-$ with $\partial a^- = b^-, \partial b^-=0.$
The induced DGA-map $\Phi_{t-\epsilon, t+\epsilon}: S(\mathcal{A}(\Lambda(t - \epsilon)))\rightarrow \mathcal{A}(\Lambda(t + \epsilon))$ is defined inductively on the $a_i^-$ as ordered by their action/subscript.
For the base case (the map $\Phi_{t-\epsilon, t+\epsilon}$ restricted to the sub-DGA generated with no $a_i^-$ generators), we define 
\begin{equation}
\label{eq:BirthDeath1}
\Phi_0(c^-) = c^+ + \delta_{c^-}^{b^-} (\partial_{t+\epsilon} a^+ - b^+).
\end{equation}
Again note that any word $w$ appearing in $(\partial_{t+\epsilon} a^+ - b^+)$ satisfies
$$ \ell(b^+) 
\ge
 \ell(w).$$

Define the algebra morphism $f: \mathcal{A}(\Lambda(t + \epsilon)) \rightarrow \mathcal{A}(\Lambda(t + \epsilon))$ on words $w^+$ that contain the letter $b^+$ by replacing the first occurrence of the letter $b^+$ with $a^+$:
$$f(w^+) = \delta_{w^+}^{\alpha^+ b^+ \beta^+} \alpha^+ a^+ \beta^+.
$$
Here $\alpha^+$ is not divisible by $b^+.$ Observe that
$$ \ell(\alpha^+ a^+ \beta^+) -\ell(w^+) = \ell(a^+)-\ell(b^+)>0$$
can be assumed to be arbitrarily small.

Assume $\Phi_{i-1}$ is defined (i.e.~the map $\Phi_{t-\epsilon, t+\epsilon}$ restricted to the sub-DGA which is generated by $b_1^-, \ldots, b_l^-,b^-,a^-,a_1^-, \ldots a_{i-1}^-$).
Then $\Phi_{i} = g_i \circ \Phi_{i-1}$ where 
\begin{equation}
\label{eq:BirthDeath2}
g_i(c^-) = c^+ + \delta_{c^-}^{a_i^-} f \circ \partial_{t + \epsilon}(a_i^+).
\end{equation}

The map $\Phi_{t-\epsilon, t+\epsilon}$ may be viewed as a sequence of {\bf artificial handle-slide maps}:
 each rigid disk in $\mathcal{M}(a_i^+; \alpha^+ b^+ \beta^+)$ contributes to a handle-slide map
 $a_i^- \mapsto a_i^+ + \alpha^+ a^+ \beta^+;$ and 
 each rigid disk in $\mathcal{M}(a^+; x^+_1 \cdots x^+_n)$ with $x^+_1 \cdots x^+_n \ne b^+$ contributes to a handle-slide map
 $b^- \mapsto b^+ + x_1^+ \cdots x_n^+.$ 
 
The above considerations on Reeb chord lengths implies that $\Phi_{t-\epsilon,t+\epsilon}$ can only increase the action by an arbitrarily small amount for sufficiently small $\epsilon.$

\begin{lma}
\label{lma:GenericIsotopy}
Assume $\Lambda(0)$ and $\Lambda(1)$ are generic.
For any $\delta >0$ there exists a contact Hamiltonian $H'_t$ such that the induced isotopy $\Lambda'(t)$ is generic as above, $\Lambda'(i) = \Lambda(i)$ for $i=0,1$
and $\left|\|H_t\|_{\OP{osc}} - \|H'_t\|_{\OP{osc}}\right| < \delta.$
\end{lma}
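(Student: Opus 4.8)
The goal is to perturb an arbitrary contact Hamiltonian isotopy (with prescribed, already generic, endpoints $\Lambda(0), \Lambda(1)$) to one whose associated Legendrian isotopy is generic in the sense listed above — only isolated handle-slides, births/deaths, and action-crossings — while changing the oscillatory norm by less than $\delta$. The strategy is a standard transversality/genericity argument relative to the boundary of the parameter interval, combined with a careful book-keeping of how the perturbation affects $\|H_t\|_{\OP{osc}}$.

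First I would work on the level of Hamiltonians rather than abstractly moving Legendrians, using the correspondence recorded in the introduction: a compactly supported $H_t \colon P \times \R \to \R$ determines the Legendrian isotopy $\Lambda(t) = \phi^t_{\alpha_{\OP{std}},H_t}(\Lambda(0))$, and conversely. I would consider the space $\mathcal{H}$ of compactly supported smooth families $H'_t$ that agree with $H_t$ to infinite order at $t=0$ and $t=1$ on a neighborhood of $\Lambda(0)$ and $\Lambda(1)$ (so the endpoints and their 1-jets are unchanged, hence $\Lambda'(i)=\Lambda(i)$), equipped with the $C^\infty$-topology. Inside $\mathcal{H}$ the set of $H'_t$ inducing a \emph{generic} Legendrian isotopy — meaning the associated path in the space of Legendrian embeddings is transverse to the relevant codimension-one walls (handle-slide locus, birth/death locus, and the locus where two Reeb chords have equal length) and meets their higher-codimension strata not at all — is a comeagre (in particular dense) subset. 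This is precisely the genericity statement underlying the bifurcation analysis of \cite{EES05b, EES07}; I would cite that there, rather than redo the Sard--Smale argument, noting only that one must be slightly careful that the perturbation is supported away from $t=0,1$, which is harmless because the walls are avoided for an open dense set of \emph{interior} perturbations and genericity of $\Lambda(0),\Lambda(1)$ already handles the endpoints.

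Second, the norm control. Since we may take $H'_t$ as $C^0$-close to $H_t$ as we like (density in $C^\infty$ implies density in $C^0$, and $\mathcal{H}$ is a linear-type space so arbitrarily small perturbations are available), and since
\[
\left|\|H_t\|_{\OP{osc}} - \|H'_t\|_{\OP{osc}}\right|
\le \int_0^1 \left( \bigl|\max H_t - \max H'_t\bigr| + \bigl|\min H_t - \min H'_t\bigr|\right) dt
\le 2\,\|H_t - H'_t\|_{C^0},
\]
choosing $\|H_t - H'_t\|_{C^0} < \delta/2$ among the generic representatives gives the desired estimate. Here I am using only that $x \mapsto \max_y x$ and $x \mapsto \min_y x$ are $1$-Lipschitz in sup-norm.

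The main obstacle, and the place I would spend the most care, is the interaction between the two requirements: the genericity perturbation and the smallness-of-norm perturbation must be achieved \emph{simultaneously}, i.e.\ there must be generic $H'_t$ in every $C^0$-neighborhood of $H_t$. This is not automatic from $C^\infty$-density alone if one were not careful about topologies, but it holds because the generic set is $C^\infty$-comeagre hence $C^\infty$-dense, and $C^\infty$-closeness implies $C^0$-closeness; so one genuinely does get generic representatives $C^0$-near (indeed $C^\infty$-near) $H_t$. A secondary subtlety is ensuring the perturbation does not destroy compact support or the $\alpha_{\OP{std}}$-form hypothesis — but perturbing $H'_t$ within compactly supported smooth functions on $P \times \R$ keeps us inside the allowed class, since the contact form is fixed and only the Hamiltonian varies. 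With these points addressed the lemma follows.
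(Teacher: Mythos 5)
There is a genuine gap in your reduction, and it sits exactly at the endpoint condition $\Lambda'(1)=\Lambda(1)$. You perturb the Hamiltonian directly inside the space $\mathcal{H}$ of families agreeing with $H_t$ to infinite order at $t=0,1$ near the endpoints, and you claim this forces $\Lambda'(i)=\Lambda(i)$. But $\Lambda'(1)=\phi^1_{\alpha_{\OP{std}},H'_t}(\Lambda(0))$ is obtained by integrating the contact flow over the whole interval $[0,1]$; it is not determined by the germ of $H'_t$ at $t=1$. A perturbation of $H_t$ supported at interior times generically moves the time-one image of $\Lambda(0)$, so an arbitrary generic element of your $\mathcal{H}$ will fail the boundary condition of the lemma. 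To make your scheme work you would have to carry out the transversality argument inside the (nonlinear) constrained set $\{H'_t : \phi^1_{\alpha_{\OP{std}},H'_t}(\Lambda(0))=\Lambda(1)\}$, which is not addressed and is not a ``linear-type space'' in which small perturbations are freely available.

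A related, secondary issue is that the genericity you invoke from \cite{EES05b,EES07} is stated for perturbations of the \emph{Legendrian isotopy} (relative to its endpoints), not for perturbations of an ambient contact Hamiltonian; asserting that the generic set is comeagre in your space of Hamiltonians requires a bridge showing that small perturbations of the Legendrian path can be realized by small perturbations of $H_t$ compatible with the endpoint constraint. The paper resolves both points in one stroke by reversing the order of operations: first perturb the Legendrian isotopy $C^\infty$-slightly rel its (already generic) endpoints to achieve transversality, describe this perturbation in a family of one-jet neighborhoods as the one-jet of a family of $C^\infty$-small functions on the Legendrian, and only then modify $H_t$ by adding a $C^\infty$-small ambient function agreeing with these near the Legendrian; the endpoint condition is then automatic and the oscillation changes by an arbitrarily small amount, exactly as in your Lipschitz estimate (which is the one part of your argument that is correct and matches the paper's implicit norm control).
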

\begin{proof}

 A $C^\infty$-small perturbation of the Legendrian isotopy provides the needed genericity used in \cite[Proposition 2.9]{EES05b} to prove families of moduli spaces of $J$-holomorphic curves are transversely cut out. Inside a family of one-jet neighborhoods of the Legendrians where the Legendrian is identified with the zero-section (see e.g.~\cite[Theorem 6.2.2]{Geiges}), such a perturbation can be described as the one-jet of a family of $C^\infty$-small real-valued functions defined on the Legendrian. It is now clear that $H'_t$ can be obtained from $H_t$ by the addition of a $C^\infty$-small function on the contact manifold which coincides with the latter family of functions near the Legendrian (appropriately extended from the Legendrian to the entire jet-space neighborhood).
\end{proof}

\subsection{Partial linearizations for the two-component link}
\label{sec:FilteredInvariance}

We show how to linearize the Legendrian contact homology DGA inside an action window, and investigate its invariance properties. For a generic one-parameter family of data, we obtain a PWC family of filtered complexes in the sense of Section \ref{sec:ContFamily}.

Consider a Legendrian isotopy $\Lambda(t)$ of a two-component link. (Each ``component" may itself be disconnected and have interesting topology but we do not consider such sub-components individually.)
There are two types of chords: {\bf pure} chords $\mathcal{Q}_{\OP{pure}} = \mathcal{Q}_{\OP{pure}}(t)$ which start and end on the same component; and, {\bf mixed} chords which run between the two different components.

The sub-algebra $\mathcal{A}_{\OP{pure}}(\Lambda(t))$ freely generated by pure chords $\mathcal{Q}_{\OP{pure}}$ is of course a DGA of its own, given as the free product of the DGAs for the two different components. Henceforth we will only consider an augmentation $\varepsilon$ which vanishes on each mixed chord, i.e.~which is induced by an augmentation of $\mathcal{A}_{\OP{pure}}(\Lambda(t))$. Note that an augmentation of the second type always induces an augmentation of the first type by elementary topological reasons: the differential of a mixed chord must output words in which at least one chord is mixed. For an ordering $\Lambda_0(t) \sqcup \Lambda_1(t) =\Lambda(t)$ of the two components, let $\mathcal{Q}_{\OP{mixed}} = \mathcal{Q}_{\OP{mixed}}(t)$ denote the mixed chords {\em{starting}} at $\Lambda_0(t)$ and {\em{ending}} at $\Lambda_1(t).$
Let $LCC_*^\varepsilon(\Lambda(t))$ the induced {\bf linearized (chain) complex} generated by $\mathcal{Q}_{\OP{mixed}}.$ The (linearized) differential $\partial^\varepsilon$ counts holomorphic disks with a positive puncture at a mixed chord and the augmentation applies to all but one of the negative punctures (and thus the output is thus again a mixed chord of the first type). We refer to \cite{BourgeoisChantraine} for more details.

We let $LCC_*^\varepsilon(\Lambda(t))^b_a$ denote the linearized subcomplex generated by the subset of mixed chords in $\mathcal{Q}_{\OP{mixed}}$ having lengths contained in the interval $[a,b).$ The arguments of \cite{BourgeoisChantraine} which imply that $LCC_*^\varepsilon(\Lambda(t))$ is well-defined combined with a standard filtered chain complex argument and Stokes' Theorem, imply $LCC_*^\varepsilon(\Lambda(t))^b_a$ is well-defined (but of course not necessarily invariant).
We call $[a,b)$ the {\bf action window}. 

We are also interested in the case when the DGA of $\Lambda(t)$ might not have an augmentation, but at the same time, the sub-DGA $\mathcal{A}_{\OP{pure}}^l(\Lambda(t))\subset \mathcal{A}_{\OP{pure}}(\Lambda(t))$ generated by only the pure chords of length less than some fixed number $l \ge 0$ admits an augmentation
$$ \varepsilon \colon \mathcal{A}_{\OP{pure}}^l(\Lambda(t)) \to \kk.$$

Consider the subspace $\mathcal{A}_1^l \subset \mathcal{A}(\Lambda)$ spanned by words of chords of which
\begin{itemize}

\item precisely one is in $\mathcal{Q}_{\OP{mixed}}$ (so this chord starts on $\Lambda_0$ and ends on $\Lambda_1$) whose length lies in the interval $[a,b),$ while
\item the remaining chords are all pure and with lengths each less than $l.$
\end{itemize}
This subspace can naturally be identified with the free $\mathcal{A}_{\OP{pure}}^l$--bimodule generated by the chords $\mathcal{Q}_{\OP{mixed}}$ of lengths in the interval $[a,b).$ This bimodule can be made into a chain complex, which we denote by $LCC_*^{l,\varepsilon}(\Lambda(t))^b_a.$ Since this complex is new to the literature, we describe its (linear) differential below.

 There is an automorphism $\Phi_\epsilon \colon \mathcal{A}_1^l \to \mathcal{A}_1^l$ given as the restriction of the algebra-map that is defined by $c \mapsto c+\varepsilon(c)$ on each generator (which by assumptions on $\varepsilon$ thus fixes the mixed generators). Let $\pi_\varepsilon: \mathcal{A} \rightarrow \mathcal{A}_1^l \subset \mathcal{A}$ be the canonical projection $\mathcal{A} \rightarrow \mathcal{A}_1^l $ induced by our canonical basis, post-composed with $\Phi_\epsilon.$ 
 The linearized differential can then be expressed as the linear part of $\pi_\varepsilon \circ \partial$ restricted to the vector subspace $LCC_*^{l,\varepsilon}(\Lambda(t))^b_a \subset \mathcal{A}_1^l$
 spanned by the mixed chords, which is a map
$$ \partial^\varepsilon \coloneqq (\pi_\varepsilon \circ \partial)_1 \colon LCC^{l,\varepsilon}_*(\Lambda)_a^b \to LCC^{l,\varepsilon}_*(\Lambda)_a^b.$$

\begin{lma}
\label{lma:WellDefinedness}
If $b-a \le l$ then $LCC_*^{l,\varepsilon}(\Lambda(t))^b_a$ is a well-defined complex, i.e.~$(\partial^\varepsilon)^2=0$, which we call the {\bf partially linearized complex (with action window $[a,b)$)}.
\end{lma}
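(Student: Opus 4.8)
The plan is to reduce the identity $(\partial^\varepsilon)^2 = 0$ to the relation $\partial^2 = 0$ in the full Chekanov--Eliashberg algebra $\mathcal{A}(\Lambda(t))$, carefully tracking actions so that the length hypothesis $b - a \le l$ guarantees that no error terms can ``leak'' out of the relevant subspace. First I would observe that $\Phi_\varepsilon \colon \mathcal{A}(\Lambda(t)) \to \mathcal{A}(\Lambda(t))$, the algebra automorphism sending each pure generator $c$ to $c + \varepsilon(c)$ and fixing the mixed generators, conjugates $\partial$ into a new differential $\partial_\varepsilon := \Phi_\varepsilon \circ \partial \circ \Phi_\varepsilon^{-1}$ which still squares to zero, and which has the key property that $\partial_\varepsilon$ applied to a pure generator of length $< l$ produces \emph{only} words of pure generators of length $< l$ with no constant term (this is exactly the statement that $\varepsilon$ is an augmentation of $\mathcal{A}^l_{\OP{pure}}$, combined with the action-decreasing property of $\partial$ and Stokes' theorem). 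So the augmentation has been absorbed into a genuine differential, and $\pi_\varepsilon \circ \partial = \pi \circ \partial_\varepsilon$ where $\pi$ is the canonical linear projection onto $\mathcal{A}_1^l$.

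Next I would analyze how $\partial_\varepsilon$ acts on $\mathcal{A}_1^l$, the span of words with exactly one mixed chord (of length in $[a,b)$) and all other letters pure of length $< l$. Writing such a word as $w = u \, m \, v$ with $u, v$ pure words of length $< l$ and $m$ mixed of length in $[a,b)$, the Leibniz rule gives $\partial_\varepsilon(w) = (\partial_\varepsilon u)\, m\, v \pm u\, (\partial_\varepsilon m)\, v \pm u\, m\, (\partial_\varepsilon v)$. The first and third terms stay in $\mathcal{A}_1^l$ automatically (pure differential of pure words is pure, lengths only decrease, and the mixed chord $m$ is untouched), so $\pi$ is the identity on them. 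The middle term $u(\partial_\varepsilon m)v$ is the only place where $\pi$ can do something nontrivial: $\partial_\varepsilon m$ is a sum of words each containing at least one mixed chord (topological parity), and $\pi$ keeps precisely those summands with exactly one mixed chord and all pure letters of length $< l$. The crucial action estimate is that since $\ell(m) < b$ and any word in $\partial_\varepsilon m$ has strictly smaller total length, if such a word has exactly one mixed letter $m'$ then its pure letters have total length $< b - a \le l$; so in fact \emph{each individual} pure letter automatically has length $< l$, meaning no truncation of pure-length actually occurs on the single-mixed-chord summands — the only summands $\pi$ discards from $\partial_\varepsilon m$ are those with three or more mixed chords. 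This is the heart of why $b - a \le l$ is exactly the right hypothesis.

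Now I would compute $(\partial^\varepsilon)^2$ as the linear part of $(\pi \circ \partial_\varepsilon)^2$ restricted to the mixed-chord subspace $LCC^{l,\varepsilon}_*(\Lambda(t))^b_a$. Starting from $\partial_\varepsilon^2 = 0$ and applying $\pi$, I want to argue that $\pi \circ \partial_\varepsilon \circ (\id - \pi) \circ \partial_\varepsilon$ vanishes on the mixed generators at the linear level — i.e., that the ``correction'' coming from the part of $\partial_\varepsilon m$ that $\pi$ throws away (the $\ge 3$-mixed-chord words) cannot re-enter $\mathcal{A}_1^l$ after one more application of $\partial_\varepsilon$ at the linear order. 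A word with $\ge 3$ mixed chords has, by a second appeal to parity, a differential consisting of words with $\ge 2$ mixed chords (each application of $\partial$ to a mixed chord outputs $\ge 1$ mixed chord, and the other mixed letters are carried along), so it never contributes to the single-mixed-chord subspace, let alone to its linear part; and the pure-chord bookkeeping is handled by the length estimate above exactly as in the first computation. Hence $\pi \circ \partial_\varepsilon^2 = (\pi\partial_\varepsilon)^2$ on the relevant subspace modulo terms $\pi$ kills anyway, and since $\partial_\varepsilon^2 = 0$ we get $(\partial^\varepsilon)^2 = 0$ after extracting linear parts.

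\textbf{Main obstacle.} The routine parts are the Leibniz/parity bookkeeping; the genuinely delicate point — and where I would spend the most care — is verifying that $\pi$ commutes with $\partial_\varepsilon$ closely enough that $(\pi\partial_\varepsilon)^2 = \pi \partial_\varepsilon^2$ at the linear level on $LCC^{l,\varepsilon}_*(\Lambda(t))^b_a$. One must rule out the scenario where $\partial_\varepsilon m$ has a word $w$ with, say, three mixed chords whose \emph{own} differential produces a word with one mixed chord — this is impossible for the mixed-chord count by parity, but the argument has to be stated precisely in terms of which chord the positive puncture sits at and how negative punctures distribute mixed versus pure chords, and one must simultaneously confirm that all intermediate pure letters genuinely stay below length $l$ using only $b - a \le l$ and strict action decrease. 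Once that combinatorial-action lemma is nailed down, the conclusion is immediate.
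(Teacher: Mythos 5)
Your route is genuinely different from the paper's: the paper disposes of the lemma in one geometric paragraph, observing that in the gluing/breaking analysis that proves $d^2=0$ for a linearized complex, the chord at which the two disks are glued is either a mixed chord (contributing to $(\partial^\varepsilon)^2$) or a pure chord of length less than $b-a\le l$ (by Stokes' theorem applied to the one-dimensional moduli space, since the positive mixed puncture has length $<b$ and the negative mixed puncture has length $\ge a$), so the augmentation is only ever evaluated where it is defined and the argument of the globally augmented case applies verbatim. Your proposal instead tries to derive $(\partial^\varepsilon)^2=0$ algebraically from $\partial^2=0$, which is a legitimate strategy, but as written it has a real gap.

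The gap: nowhere in your squaring argument is the augmentation identity $\varepsilon\circ\partial=0$ on $\mathcal{A}^l_{\OP{pure}}$ actually used. You use $\partial_\varepsilon^2=0$ (true for \emph{any} choice of values $\varepsilon(c)$, since it is just conjugation of $\partial^2=0$), parity of mixed chords, and the action estimate; but the lemma is false for a general linear $\varepsilon$, so these ingredients cannot suffice. The missing step is your unproved identification of $(\partial^\varepsilon)^2$ with ``the linear part of $(\pi\circ\partial_\varepsilon)^2$.'' The first application $\pi_\varepsilon\circ\partial$ sends a mixed generator $m$ not only to single mixed chords but also to words $u\,m'\,v$ with nonempty pure parts $u,v$ (all of length $<l$ by the action estimate), i.e.\ to the word-length $\ge 2$ part of $\mathcal{A}_1^l$. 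When the second differential hits such a word, a \emph{linear} term (a bare mixed chord) can be produced by the Leibniz rule: differentiate the pure letter and extract the constant term of its twisted differential, which equals $\varepsilon(\partial c)$ for that pure chord $c$. These are exactly the contributions that distinguish $(\partial^\varepsilon)^2(m)$ from the linear part of $\pi\partial_\varepsilon\pi\partial_\varepsilon(m)$, and they vanish precisely because $\varepsilon$ is an augmentation of $\mathcal{A}^l_{\OP{pure}}$ — with $b-a\le l$ needed again to guarantee $\ell(c)<l$ so that the identity $\varepsilon(\partial c)=0$ is available. You state the ``no constant term'' property of $\partial_\varepsilon$ in your first paragraph, but it is never deployed; the obstacle you flag as delicate (re-entry of discarded words with $\ge 3$ mixed chords) is in fact the easy part, handled by parity exactly as you say.

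Two smaller inaccuracies to repair along the way. First, for a two-component link the full differential of a \emph{pure} chord may contain words with an (even, positive) number of mixed chords, so ``$\partial_\varepsilon$ of a short pure generator produces only pure words'' is not literally true; those terms are harmless (after sandwiching they have $\ge 3$ mixed letters and are killed by the mixed-count argument), but they must be mentioned, and the augmentation identity one actually uses is $\varepsilon\circ\partial_{\OP{pure}}=0$ for the internal differential, extended by zero on mixed chords as the paper stipulates. Second, $\pi$ also discards words from $\partial_\varepsilon m$ whose unique mixed chord has length below $a$ (below the window), not only the $\ge 3$-mixed-chord words; these need — and easily get, from the strict action decrease of $\partial$ — a separate one-line argument that their further differentials never produce a mixed chord back inside $[a,b)$. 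Relatedly, defining $\Phi_\varepsilon$ on all of $\mathcal{A}$ requires extending $\varepsilon$ (say by zero) to pure chords of length $\ge l$, for which the no-constant-term property fails; one must check, again via $b-a\le l$, that such chords never occur as letters in the words relevant to the computation.
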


\begin{proof}
The inequality $b-a \le l$ implies that when counting the glued pairs of disks which contribute to $(\partial^\varepsilon)^2,$ the Reeb chord at which the gluing occurs cannot be a one for which the augmentation is not defined. This reduces the $\left[(\partial^\varepsilon)^2=0\right]$-proof to the established case when the augmentation is globally defined.
\end{proof}

We need to set-up a bifurcation analysis to prove a certain form of invariance for the complex $LCC_*^{l,\varepsilon}(\Lambda(t))^b_a$ as the parameter $t$ varies. We start by considering the case of a singular moment $t$ for the bifurcation of the DGA, at which no chord has length equal to $l.$

Choose $\epsilon>0$ sufficiently small so that the chords which do not undergo a birth/death are preserved for all $s \in [t-\epsilon,t+\epsilon].$
In the event of a birth of pair of chords $a^+,b^+$ we suppress the stabilization notation, using $\mathcal{A}(\Lambda(t-\epsilon))$ for $S(\mathcal{A}(\Lambda(t-\epsilon))).$

\begin{lma}
\label{lma:LinearHandleSlide}
Assume that no chord at time $t$ has length equal to either of the values $l,a,b \in \R$ and that there exists an augmentation $\varepsilon \colon \mathcal{A}_{\OP{pure}}^l(\Lambda(t-\epsilon)) \to \kk$ for all sufficiently small $\epsilon>0.$ Let $$\Phi_{t-\epsilon,t+\epsilon} \colon (\mathcal{A}(\Lambda(t-\epsilon)), \partial) \rightarrow (\mathcal{A}(\Lambda(t+\epsilon)), \partial')$$ be either a birth/death or a handle-slide as in (\ref{eq:HandleSlide}). For sufficiently small $\epsilon>0,$ there exists an augmentation 
$$\varepsilon' \colon \mathcal{A}^l_{\OP{pure}}(t+\epsilon) \to \kk$$ 
 for which the map
$$\phi_\varepsilon := \pi_{ \varepsilon'} \circ \Phi_{t-\epsilon,t+\epsilon} \colon LCC_*^{l,\varepsilon}(\Lambda(t-\epsilon))^b_a \to LCC_*^{l,\varepsilon'}(\Lambda(t+\epsilon))^b_a$$
is a composition of action-preserving (linear) chain maps $c \mapsto c+f$ with $\ell(f)<\ell(c)$ (here we include the case $f=0$) together with cancellations of death pairs.
\end{lma}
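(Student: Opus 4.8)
The plan is to track, generator by generator, how the DGA map $\Phi_{t-\epsilon,t+\epsilon}$ of \eqref{eq:HandleSlide} (handle-slide) or of the birth/death sequence \eqref{eq:BirthDeath1}--\eqref{eq:BirthDeath2} descends through the projection $\pi_{\varepsilon'}$ to the bimodule $\mathcal{A}_1^l$ and, restricting to its linear part, to the partially linearized complexes $LCC^{l,\varepsilon}_*(\Lambda(t\mp\epsilon))^b_a$. The first thing I would do is produce the augmentation $\varepsilon'$: in the handle-slide case take $\varepsilon' = \varepsilon \circ \Phi_{t-\epsilon,t+\epsilon}^{-1}$ restricted to $\mathcal{A}^l_{\OP{pure}}$, using that the handle-slide disk has all negative punctures of length $<\ell(a)$, so that $\Phi$ preserves the pure sub-DGA of length $<l$ (no chord has length exactly $l$, $a$, or $b$ at time $t$, so the relevant inequalities are strict and stable for small $\epsilon$); in the birth/death case one similarly transports $\varepsilon$ along the DGA-homotopy-equivalence, sending the artificial generators $a^+,b^+$ to $0$. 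The key structural observation to record is that $\Phi_{t-\epsilon,t+\epsilon}$, on every generator, has the form $c \mapsto c + (\text{higher words, of action at most }\ell(c)\text{, in fact arbitrarily close})$, both for genuine handle-slides and for the "artificial handle-slide maps" $a_i^- \mapsto a_i^+ + \alpha^+ a^+\beta^+$ and $b^- \mapsto b^+ + x_1^+\cdots x_n^+$ into which the birth/death map was decomposed in the text.

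Next I would verify that $\phi_\varepsilon = \pi_{\varepsilon'}\circ\Phi_{t-\epsilon,t+\epsilon}$ actually maps $LCC^{l,\varepsilon}_*(\Lambda(t-\epsilon))^b_a$ to $LCC^{l,\varepsilon'}_*(\Lambda(t+\epsilon))^b_a$ and is a chain map. Both follow from the same mechanism as in Lemma \ref{lma:WellDefinedness}: because $b-a\le l$, any word in the image that contributes to the linear part and lies in the action window $[a,b)$ has its unique mixed letter of length in $[a,b)$ and all its pure letters of length $<l$, so $\pi_{\varepsilon'}$ genuinely lands in $\mathcal{A}_1^l$ and the identity $\Phi\circ\partial = \partial'\circ\Phi$ restricts correctly; the projection $\pi_{\varepsilon'}$ commutes with the differential on the relevant subspace because the augmentation is defined on every chord at which a gluing/truncation could occur. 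Taking the linear part is exact on this graded piece. The action-preservation and the form $c\mapsto c+f$ with $\ell(f)<\ell(c)$ (allowing $f=0$) then comes from decomposing $\Phi$ into its elementary factors exactly as in Section \ref{sec:HSBD}: each genuine handle-slide factor, and each artificial handle-slide factor from the birth/death, after applying $\pi_{\varepsilon'}$ and linearizing, becomes either the identity or a map of the stated type; one must note the subtlety that the word $\alpha^+ a^+\beta^+$ has action strictly larger than $\ell(b^-)$ but by an amount that can be taken arbitrarily small, so after possibly shrinking $\epsilon$ it still stays within the window and on the correct side of the filtration of every other generator — this is where one invokes that $l,a,b$ are not chord lengths at time $t$. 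The remaining factor of the birth/death composition is precisely the cancellation of the death pair $(a^+,b^+)$ (the passage from $S(\mathcal{A}(\Lambda(t-\epsilon)))$ with $\partial a^- = b^-$ back to $\mathcal{A}(\Lambda(t-\epsilon))$), which accounts for the "together with cancellations of death pairs" clause.

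The main obstacle I expect is bookkeeping the interplay between the projection $\pi_{\varepsilon'}$ (which is $\Phi_\epsilon$ post-composed with the basis projection onto $\mathcal{A}_1^l$) and the nonlinear DGA map $\Phi_{t-\epsilon,t+\epsilon}$: one must check that substituting $c\mapsto c+\varepsilon'(c)$ on pure generators and then discarding all words not of the "one mixed chord in $[a,b)$, rest pure of length $<l$" type is compatible, factor by factor, with the decomposition of $\Phi$ into (artificial) handle-slides, and that the truncation at the action-window boundaries $a$ and $b$ does not spoil the chain-map property — i.e.\ that no differential can carry a word across the boundary $b$ or below $a$ within the window after projection. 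This is exactly the kind of filtered-chain-complex-plus-Stokes argument already used to define $LCC^{l,\varepsilon}_*(\Lambda(t))^b_a$ and proved in Lemma \ref{lma:WellDefinedness}, so the resolution is to reduce each claim to the globally-augmented case by observing that all gluings/substitutions occur at chords of length $<l$ (hence in the domain of $\varepsilon'$) and within, or just outside but controllably close to, the window; the genericity hypotheses on $t$ make every relevant inequality strict, so the argument is stable under shrinking $\epsilon$.
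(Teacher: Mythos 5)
Your overall strategy is the same as the paper's: construct $\varepsilon'$ by pulling back along the (action-preserving) inverse in the handle-slide case and by extending over the new/remaining generators in the birth/death case, decompose $\Phi_{t-\epsilon,t+\epsilon}$ into real and artificial handle-slides plus a (de)stabilization, apply $\pi_{\varepsilon'}$, take linear parts, and identify the destabilization with the ``cancellation of death pairs'' clause. That skeleton matches the paper. However, there is a gap at precisely the point the paper singles out as the non-standard one. Your reduction to the globally-augmented case rests on the assertion that ``all gluings/substitutions occur at chords of length $<l$ (hence in the domain of $\varepsilon'$).'' This is not true in general: a (real or artificial) handle-slide disk with positive puncture a mixed chord in the window, or a pure chord of length $<l$, may perfectly well have a negative \emph{pure} puncture of length $\geq l$, where neither $\varepsilon$ nor $\varepsilon'$ is defined, so the standard linearization argument from \cite[Section 2.4]{BourgeoisChantraine} does not apply to that factor and your blanket claim does not dispose of it.

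What saves the lemma in that case — and what your proposal does not supply — is the two-step argument of the paper. First, the inequality $b-a<l$ together with Stokes' theorem forces such a disk to also have a negative \emph{mixed} puncture of action less than $a$; hence the correction term is annihilated by the window quotient and the induced map on $LCC_*^{l,\varepsilon}(\cdot)^b_a$ is just the identity. Second, one must still check that the identity is a chain map, i.e.\ that the window differentials before and after this bifurcation coincide; this does not follow from the identity $\Phi\circ\partial=\partial'\circ\Phi$ alone, because the discarded words could a priori re-enter the window after applying the differential. The paper closes this by a Gromov-compactness/action argument: a configuration obtained by gluing such a handle-slide disk to a differential disk has mixed chords whose action difference exceeds $b-a$, so it cannot contribute to the window differential. (Equivalently, one can argue that any word containing a pure letter of length $\geq l$ has its mixed letter of action below $a$, and the differential only pushes that action further down; but some such explicit verification is required.) Your write-up gestures at ``no differential can carry a word across the boundary after projection'' but asserts rather than proves it, and the justification you do give — that all gluings happen at chords of length $<l$ — is exactly what fails in this case. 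Filling in this case, as the paper does, is what the lemma needs; the rest of your proposal (including the small action increase $\ell(a^+)-\ell(b^+)$ at artificial handle-slides and the treatment of the (de)stabilization factors) is in line with the paper's proof.
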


\begin{proof}
Since $\epsilon$ is small, we can assume the same set of pure chord generators for $\mathcal{A}_{\OP{pure}}^l(\Lambda(t-\epsilon)$
and for $\mathcal{A}_{\OP{pure}}^l(\Lambda(t+\epsilon)),$
as well as the same set of mixed chord generators for $LCC_*^{l,\varepsilon}(\Lambda(t-\epsilon))^b_a$ and for $LCC_*^{l,\varepsilon'}(\Lambda(t+\epsilon))^b_a.$

 The proof reduces to the the three cases when $\Phi_{t-\epsilon,t+\epsilon}$ corresponds to either a single (real or artificial) handle-slide, a single stabilization, or a single destabilization.

\emph{Handle slide:} There exists an inverse $\Phi_{t-\epsilon,t+\epsilon}$ which also is action preserving. The augmentation can thus be taken to be $\varepsilon'=\varepsilon \circ \Phi_{t-\epsilon,t+\epsilon}^{-1}|_{\mathcal{A}^l},$ and we thus get $\varepsilon=\varepsilon' \circ \Phi_{t-\epsilon,t+\epsilon}|_{\mathcal{A}^l}.$ In this case we define
$$\phi_\varepsilon=\pi_{\epsilon'}\circ\Phi_{t-\epsilon,t+\epsilon}|_{LCC_*^{l,\varepsilon}(\Lambda(t-\epsilon))^b_a}.$$

We need to check that $\phi_\varepsilon$ is a chain map of the sought form.

When $\Phi_{t-\epsilon,t+\epsilon}$ is defined by a handle-slide ``disk" (real or artificial) for which all negative pure punctures action less than $l,$ then statement follows from a standard argument; see e.g.~\cite[Section 2.4]{BourgeoisChantraine}. (We can interpret $\phi_\varepsilon$ as a linearization of a DGA morphism in the standard sense.)

Consider the ``disk" with positive puncture $e$ and negative punctures $f_1, \ldots, f_k.$
 The inequality $b-a <l$ and the description of the real and artificial handle-slides in Section \ref{sec:HSBD} imply the following: if $e$ is a pure chord of less length than $l$ or a mixed chord between length $a$ and $b,$ then no $f_i$ can be a pure chord of length greater than $l,$ unless possibly if at least one of the $f_i \in \mathcal{Q}_{\OP{mixed}}$ is a mixed chord with action less than $a.$
In this case, the induced linear map is the identity. That the identity map is a chain map can be seen from the point of view Gromov compactness: the handle-slide disk with a negative mixed chord of action less than $a$ cannot be glued to a disk used to defined the boundary, since the action difference of the mixed chords of such a glued disk is greater than $b-a.$

\emph{Stabilization:} We extend $\varepsilon'$ to vanish on the new pair of generators, and $\phi_\varepsilon$ is simply the canonical inclusion.

\emph{Destabilization:} We let $\varepsilon'$ take the same value as $\varepsilon$ on the remaining generators. In the case when the death involves pure chords, the chain map $\phi_\varepsilon$ is the identity. In the case when the death involves mixed chords, $\phi_\varepsilon$ is simply the corresponding quotient. Note that these are chain map, even though it is possible that $\varepsilon'\circ \Phi_{t-\epsilon,t+\epsilon}\neq \varepsilon.$ (However, since we are only considering the destabilization, as opposed to a death together with its artificial handleslides, this is irrelevant.)
\end{proof}

The main difference between the invariance of the usual linearized complex and the invariance of a sequence of complexes $LCC_*^{l,\varepsilon}(\Lambda(t))_{a_t}^{b_t}$ considered here is that generators of the latter can slide in and out of the action window $[a_t,b_t)$.

In the following we assume that there exists an augmentation
$$ \varepsilon \colon \mathcal{A}_{\OP{pure}}^l(\Lambda(0)) \to \kk$$
for some $l \in \R$ and write
\begin{equation}
\label{eq:l(t)}
 l(t) \coloneqq \int_{0}^{t} \left(\max_{\Lambda(s)} H_s-\min_{\Lambda(s)} H_s\right) ds = \|H_t\|^{0,t}_{\OP{osc}}.
 \end{equation}
% If $H_t$ {\ms{restricted to each component of $\Lambda(t)$}} lifts from a Hamiltonian on the symplectic manifold $P$ as in Remark \ref{rmk:Strict}, then we set $l(t) \equiv 0.$
%%In the case when $H_t$ restricted to each component of $\Lambda(t)$ can be obtained as the lift of a Hamiltonian defined on the symplectic manifold $P$ as in Remark \ref{rmk:Strict}, then we simply set $l(t) \equiv 0$ above.

\begin{prp}
\label{prp:Bifurcation}

Consider a smooth path of Legendrians $\Lambda(t)$ for which $\Lambda(0)$ and $\Lambda(1)$ are generic. After the perturbation supplied by Lemma \ref{lma:GenericIsotopy} we can moreover assume the following. First, there exist augmentations
$$\varepsilon_t \colon \mathcal{A}_{\OP{pure}}^{l-l(t)}(\Lambda(t)) \to \kk$$
with $\varepsilon_0=\varepsilon$. Second, the induced family of partially linearized complexes
$$LCC_*^{l-l(t),\varepsilon_t}(\Lambda(t))_{a_t}^{b_t}, \:\: b_t-a_t \le l-l(t),$$
with smoothly varying $a_t,b_t$ form a PWC family of filtered complexes with action window (see Section \ref{sec:ContFamily} for the definition). Here the filtration is induced by the Reeb chord length and the compatible basis is given by Reeb chords.
\end{prp}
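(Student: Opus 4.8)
The plan is to build the family of augmentations $\varepsilon_t$ first and then check, one singular moment at a time, that the resulting partially linearized complexes fit the list of allowed simple bifurcations from Section \ref{sec:ContFamily}. After applying Lemma \ref{lma:GenericIsotopy} we may assume the isotopy $\Lambda(t)$ is generic, so the singular times $t_1 < \cdots < t_K$ are isolated and at each of them exactly one of the following happens: a handle-slide, a birth/death of a pair of pure or mixed chords, or a sign change of the relative action of two chords. Separately we add to this list the finitely many times at which some Reeb chord length crosses one of the moving thresholds $a_t$, $b_t$, or $l - l(t)$; after a further $C^\infty$-small perturbation we may assume all these events are also isolated and disjoint from the DGA-bifurcation times. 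Note $l(t)$ from \eqref{eq:l(t)} is continuous and piecewise smooth, and by Lemma \ref{lma:main} the lengths $\ell(c(t))$ of the continuously varying chords are continuous and piecewise smooth, with the oscillation bound $|\ell(c(0)) - \ell(c(1))| \le \|H_t\|_{\OP{osc}}$ controlling how far they can drift.

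The construction of $\varepsilon_t$ is by transport along the isotopy. On each closed subinterval between consecutive DGA-singular times the pure sub-DGA $\mathcal{A}_{\OP{pure}}(\Lambda(t))$ is canonically identified (same generators, continuously varying differential of the allowed strictly-action-decreasing form), and the stable-tame morphism $\Phi_{t_-,t_+}$ restricts to the pure sub-DGAs; on the truncations $\mathcal{A}_{\OP{pure}}^{l - l(t)}$ we use that $l(t)$ is nondecreasing so the truncation level only shrinks, which is compatible with pulling back augmentations. Concretely, at a handle-slide we set $\varepsilon_{t+\epsilon} = \varepsilon_{t-\epsilon} \circ \Phi_{t-\epsilon,t+\epsilon}^{-1}|_{\mathcal{A}^{l - l(t)}}$ (the handle-slide morphism is action-preserving and invertible, so this makes sense once $l - l(t)$ avoids the finitely many chord lengths, which we arrange); at a stabilization we extend by zero on the new pair; at a destabilization we restrict. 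This is exactly the bookkeeping set up in Lemma \ref{lma:LinearHandleSlide}, and gives $\varepsilon_t$ with $\varepsilon_0 = \varepsilon$, defined and continuous (as a finite-dimensional datum) across all of $[0,1]$.

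With $\varepsilon_t$ in hand, the complexes $LCC_*^{l - l(t), \varepsilon_t}(\Lambda(t))_{a_t}^{b_t}$ are well-defined by Lemma \ref{lma:WellDefinedness} (the hypothesis $b_t - a_t \le l - l(t)$ is exactly what we assumed), with differential the strictly action-decreasing linearization $\partial^{\varepsilon_t}$, and the Reeb chords of mixed type with length in $[a_t, b_t)$ form a compatible basis whose actions vary continuously and piecewise smoothly. On each component of $[0,1] \setminus \{t_1, \dots, t_K, \text{threshold times}\}$ all four bullet points in the definition of a continuous family hold, so it remains to match each singular moment with one of the prescribed simple bifurcations. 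At a pure handle-slide or a birth/death involving pure chords, or a mixed handle-slide disk all of whose outputs stay in the window, Lemma \ref{lma:LinearHandleSlide} says $\phi_{\varepsilon}$ is a composition of action-preserving linear chain maps $c \mapsto c + f$ with $\ell(f) < \ell(c)$ together with death cancellations — i.e.\ a handle-slide bifurcation (and, for a pure birth/death, a handle-slide since the new generators are pure and do not appear in the mixed complex). A birth/death of a pair of \emph{mixed} chords, both with length in $[a_t,b_t)$, is exactly the birth/death bifurcation, with $S = \kk x \oplus \kk y$ and $\partial_S x = y$, split off by Lemma \ref{lma:LinearHandleSlide}'s destabilization/stabilization description. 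A chord length crossing $a_t$, $b_t$, or $l - l(t)$ from one side to the other is matched with "generator enters/exits at the bottom," "enters/exits at the top," respectively: in each case the extension of the family past the threshold is just the same geometric complex with one more or one fewer generator retained, and $LCC^{l-l(t),\varepsilon_t}_*$ on one side is literally the quotient (bottom) or a subcomplex (top) of the extension, with one-dimensional kernel or cokernel precisely on the side where the generator lies in the window. A relative-action sign change of two chords is a purely filtered phenomenon with no bifurcation and is already absorbed into the continuous family.

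The main obstacle is the threshold $l - l(t)$: unlike $a_t, b_t$, which we are free to choose smooth, $l - l(t)$ is only piecewise smooth and, more importantly, it governs \emph{which pure chords are allowed in the augmentation and in the bimodule $\mathcal{A}_1^l$}, so a pure chord whose length crosses $l - l(t)$ changes the very definition of the complex rather than just sliding through a window. Handling this requires checking that when a pure chord length equals $l - l(t)$ we can perturb to avoid the coincidence — using that both sides are piecewise smooth and generically transverse after the perturbation of Lemma \ref{lma:GenericIsotopy}, together with the freedom in choosing the extension of the contact Hamiltonian — and that away from such coincidences the inductive transport of $\varepsilon_t$ through \eqref{eq:HandleSlide}, \eqref{eq:BirthDeath1}, \eqref{eq:BirthDeath2} never needs to evaluate on a chord at the forbidden length, which is where the strict inequalities on Reeb chord lengths recorded in Section \ref{sec:HSBD} (e.g.\ $\ell(a^\pm) \ge \ell(b_1^\pm \cdots b_k^\pm)$ and the arbitrarily small action increase of $\Phi_{t-\epsilon,t+\epsilon}$) are used. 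Once this is clean, the remaining verifications are the routine case analysis above.
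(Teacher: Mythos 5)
Most of your scheme---transporting the augmentation with Lemma \ref{lma:LinearHandleSlide} across handle-slides and (de)stabilizations, matching mixed-chord crossings of $a_t,b_t$ with the entry/exit bifurcations, and isolating all events by genericity---is the same as the paper's argument. The genuine gap is in your treatment of the moments you yourself flag as the main obstacle, when a pure chord length meets the level $l-l(t)$. You propose to ``perturb to avoid the coincidence,'' but such crossings cannot be perturbed away: the function $\ell(c)(t)-\bigl(l-l(t)\bigr)$ generically changes sign transversally, and a $C^\infty$-small perturbation preserves the sign change; perturbation only makes the crossings isolated, transverse, and disjoint from the DGA bifurcations. So you must still say what happens to the augmentation and to the partially linearized complex at such a crossing, and this is exactly where the two substantive arguments enter, both absent from your proposal. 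First, a pure chord can never \emph{enter} the window $[0,l-l(t))$: by Lemma \ref{lma:main}, $\frac{d}{dt}\bigl[\ell(c)(t)-(l-l(t))\bigr]=\bigl(H_t(e(t))-H_t(s(t))\bigr)+\bigl(\max_{\Lambda(t)}H_t-\min_{\Lambda(t)}H_t\bigr)\ge 0$, so a chord at or above the threshold stays there; the window shrinks exactly as fast as any chord can shorten. Without this asymmetry your transported $\varepsilon_t$ would have no value to assign to a pure chord newly appearing in $\mathcal{A}^{l-l(t)}_{\OP{pure}}(\Lambda(t))$, and no augmentation of the enlarged truncation need exist at all.

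Second, when a pure chord $c$ \emph{exits} at $t_0$ (i.e.\ $\ell(c)=l-l(t_0)$), one defines $\varepsilon_{t_0}$ as the restriction of $\varepsilon_{t_0-\epsilon}$ under the inclusion $\mathcal{A}^{l-l(t_0)}_{\OP{pure}}(\Lambda(t_0))\subset\mathcal{A}^{l-l(t_0-\epsilon)}_{\OP{pure}}(\Lambda(t_0-\epsilon))$, and one must verify that the partial linearizations on either side of $t_0$ are canonically isomorphic, so that this moment is invisible to the PWC family. The verification is an action argument: since $b_t-a_t\le l-l(t)$ and $\partial$ is strictly action-decreasing, any word in $\partial(x)$ containing a mixed chord $y$ with $\ell(x)-\ell(y)\in[a_{t_0},b_{t_0})$ has all of its pure letters of length strictly less than $l-l(t_0)$, so the letter $c$ cannot occur and the linearized differential is unchanged across $t_0$. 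Your proposal gestures at ``never needing to evaluate on a chord at the forbidden length'' only for the transport of $\varepsilon_t$ through handle-slides and births, not for the linearized differential at the exit moment itself, and it never establishes the no-entry property; neither point follows from genericity alone, and both are needed for the proposition.
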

\begin{proof}
Lemma \ref{lma:LinearHandleSlide} shows that the complexes undergo standard bifurcations, i.e.~handle slides and birth/deaths, except at the following moments:
\begin{itemize}
\item[(M)] There exist mixed chords on $\Lambda(t)$ of length equal to $a_t$ or $b_t$.
\item[(P)] There exist pure chords on $\Lambda(t)$ of length equal to $l-l(t)$;
\end{itemize}
A genericity argument as in the proof of Lemma \ref{lma:GenericIsotopy} allows us to assume that moments (M) and (P) are isolated and generic in the sense that at most one chord is involved at each separate moment in time, and also that they are disjoint from any handle-slide or birth/death occurring in the Chekanov--Eliashberg algebra. To see this, note that via the front projection, neighborhoods of Reeb chords can be modeled by the neighborhood of a critical point of the difference of two smooth functions. Since Morse functions generically have distinct critical values, we can isolate our bifurcation moments.

One can directly verify that the bifurcations from (M) are precisely the entrances and exits as described in Section \ref{sec:ContFamily}.

The bifurcations (P) involving pure chords have the property that no pure chord can \emph{enter} the action window $[0,l-l(t))$ at any $t \in [0,1]$. Indeed, by Lemma \ref{lma:main}, the speed by which the length of any Reeb chord on the Legendrian shrinks is strictly less than $\max_{\Lambda(s)} H_s-\min_{\Lambda(s)} H_s$, which also is precisely the speed by which the action window $[0,l-l(t))$ is shrinking.

We are thus left with the investigation of a moment when a pure chord $c$ exits the action window $[0,l-l(t))$ at some given time $t=t_0$. More precisely, the chord $c$ is of length $\ell(c)<l-l(t)$ when $t<t_0$ while $\ell(c)=l-l(t)$ at time $t=t_0$. In this case there is a canonical inclusion
$$\mathcal{A}^{l-l(t_0)}_{\OP{pure}}(\Lambda(t_0)) \subset \mathcal{A}^{l-l(t_0-\epsilon)}_{\OP{pure}}(\Lambda(t_0-\epsilon))$$
of DGAs for all $\epsilon>0$ sufficiently small, where the sub-DGA is generated by the same generators as the larger DGA except for the chord $c$.
Take $\varepsilon_{t_0}$ to be the pull-back of $\varepsilon_{t_0-\epsilon}$ under any of the above inclusion. 
We claim that the corresponding partial linearizations are canonically isomorphic for these two augmentations. The crucial property used here is that, as shown by an action consideration, if $x$ and $y$ are mixed chords for which $\ell(x)-\ell(y) \in [a_{t_0},b_{t_0})$, then any word that contains $y$ and appears in the DGA differential $\partial(x)$ at $t=t_0$ does not contain any chord of length $l-l(t_0)$. (The differential $\partial$ is strictly action decreasing.) Since the words that contribute to the linearized differentials thus cannot consist of a word that contains the letter $c$ at either moment $t=t_0,t_0-\epsilon$, the linearized differentials on the mixed chords agree as sought.
\end{proof}

\subsection{Completing the proof of Theorem \ref{thm:main}}

We use the notation introduced in Section \ref{sec:Introduction} before the theorem statement.
In particular, $\Lambda$ is some Legendrian, not the link from the previous subsection.

With an arbitrarily small change in the Legendrian and the contact Hamiltonian, we can assume all moduli spaces below a pre-determined index are transversely cut out \cite{EES07}. In particular Lemma \ref{lma:GenericIsotopy} and \cite[Theorem 3.6]{Ekholm:Duality} (used below) hold.

Let $\bar{\Lambda} = \Lambda \sqcup \Lambda'$ be the 2-copy of $\Lambda$ where the second copy $ \Lambda'$ is translated in the positive Reeb direction by $N \gg \max_c \ell(c).$ Here the max is taken over all pure Reeb chords.
We perturb $\Lambda'$ by a small Morse function with $C^2$-norm bounded by 
$\epsilon \ll \min_c \ell(c).$ Here the min is taken over all Reeb chords.
For each pure chord $c$ of $\Lambda$ there are two mixed chords $p^+_c,p^-_c$ such that the projections
 $P \times \R \rightarrow P$ of $c, p^+_c, p^-_c$ are within $\epsilon,$ and such that 
$|\ell(p^\pm_c) - (N \pm \ell(c))| < \epsilon.$
The remaining mixed chords are called the Morse chords, which we denote by $x.$ They correspond to the critical points of the Morse function and satisfy $|\ell(x)-N| < \epsilon.$

\begin{prp}
\label{prp:ModuliSpaceLift}
For a suitable choice of almost complex structure and sufficiently small Morse perturbation of $\overline{\Lambda},$ the following can be said about the punctured pseudoholomorphic disks on the two-copy with one positive puncture and precisely two punctures at mixed chords. (It follows that the positive puncture must be mixed.)
\begin{enumerate}
\item When both mixed chords are Morse chords, any rigid disk corresponds to a unique rigid gradient flow-line for the Morse function considered.
\item Let $x$ be a Morse chord of index $k.$ 
For every rigid disk with $x$ as the unique positive (resp. unique mixed negative) puncture, the other mixed puncture is a negative puncture at some $p_c^-$ (resp. positive puncture at some $p_c^+$). Moreover, the existence of such a rigid disk implies that the moduli space $\mathcal{M}^{n-k}(c)$ (resp. $\mathcal{M}^{k}(c)$) is non-empty.

\item Assume that the Morse function has a unique max and min on each connected component. The disks having the max chord (resp. min chord) as a positive (resp. negative puncture) are small triangles that are in a one-to-one correspondence with gradient flow-lines that connect the max (resp. min) and an endpoint of a Reeb chord on $\Lambda.$ Furthermore, the max is a cycle and the min is a cocycle for the linearized differential, if the \emph{same} augmentation is used on both components $\Lambda$ and $\Lambda'$. 

 \item Assume that the Morse function has a unique max and min on each connected component, both of which moreover are located sufficiently close to a given generically chosen point $\pt_i \in \Lambda,$ $i \in \pi_0(\Lambda).$ Any disk with the negative puncture at the max chord (resp. positive puncture at the min chord) corresponds to a moduli space $\mathcal{M}^{n}(c)$ for which the boundary of the disc passes through the ray $\R \times \pt_i.$ 
\end{enumerate}
\end{prp}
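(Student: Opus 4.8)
The plan is to reduce the Proposition to two ingredients: elementary action (Stokes) bookkeeping that pins down the combinatorial type of every disk under consideration, and the Morse--Bott degeneration analysis of a Legendrian two‑copy due to Ekholm--Etnyre--Sullivan \cite{EES05b} and Ekholm \cite[Theorem 3.6]{Ekholm:Duality}. The geometric input is exactly the one behind the standard description of the linearized contact homology of a two‑copy in terms of Morse theory on $\Lambda$; the statements here amount to tracking which chords occur in which role.

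First I would dispose of the combinatorics. For a punctured holomorphic disk the sum of the actions of its negative punctures is strictly less than the action of its positive puncture. Since every pure chord has action at most $M:=\max_c\ell(c)\ll N$ while every mixed chord has action at least $N-M$, a disk with two mixed punctures cannot have a pure positive puncture, as the two mixed ones would then both be negative with total action exceeding $2(N-M)>M$; hence the positive puncture is mixed (the parenthetical remark). If moreover the positive puncture is a Morse chord $x$ (action within $\epsilon$ of $N$), the remaining mixed puncture is negative, so its action is $<\ell(x)<N+\epsilon$, which excludes $p^+_c$ (action near $N+\ell(c)$); symmetrically, if the negative mixed puncture is a Morse chord, the positive one cannot be $p^-_c$. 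Finally, if both mixed punctures are Morse chords their actions differ by $O(\epsilon)<\min_c\ell(c)$, so there is no pure negative puncture and the disk is a bigon between the two Morse chords. This fixes, in each case of the Proposition, which chord types are allowed.

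For the geometric core I would replace the Morse function $f$ by $tf$ and let $t\to0$, degenerating $\overline{\Lambda}$ to the clean two‑copy $\Lambda\sqcup\Lambda_N$ ($\Lambda_N$ the exact Reeb translate of $\Lambda$ by $N$), which is Morse--Bott along the family of length‑$N$ mixed chords parametrized by $\Lambda$, while the chords $p^\pm_c$ remain isolated, and then invoke the Morse--Bott SFT compactness and gluing of \cite{EES05b, Ekholm:Duality}. For small $f$ this presents every rigid disk of the relevant type as glued, transversely and bijectively, from a holomorphic disk $u_0$ on the single copy $\Lambda$ together with rigid negative gradient flow lines (flow trees) of $f$ attached at the Reeb‑chord endpoints where $\partial u_0$ meets the Morse--Bott family. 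When both mixed punctures are Morse, $u_0$ is the trivial strip and the configuration is a single gradient flow line, rigid precisely when it drops Morse index by one; this is Part (1). When $x$ is a Morse chord of index $k$ and is the only positive puncture (resp.\ the only mixed negative puncture), $u_0$ acquires a positive puncture at a pure chord $c$ of $\Lambda$, a flow line issues from (resp.\ terminates at) the index‑$k$ critical point underlying $x$, and a formal‑dimension count forces $u_0\in\mathcal{M}^{n-k}(c)$ (resp.\ $u_0\in\mathcal{M}^{k}(c)$); together with the combinatorial step this is Part (2).

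Finally I would specialize to the extremal critical points. If $x$ is the maximum (index $n$, unstable manifold all of $\Lambda$), a rigid disk with $x$ as positive puncture corresponds, under the degeneration, to a negative gradient flow line from the maximum to a Reeb‑chord endpoint on $\Lambda$; for generic data exactly one such line reaches each endpoint, and these are the ``small triangles'', giving the one‑to‑one correspondence of Part (3), dually for the minimum as a negative puncture. For the cycle claim, $\partial^\varepsilon x_{\max}$ is the signed, augmentation‑weighted count of these triangles; using the \emph{same} augmentation $\varepsilon$ on both copies is precisely what makes $\Lambda'$ bound the trivial exact Lagrangian cylinder $I\times\Lambda$, and the induced filling identifies $x_{\max}$ at chain level with the fundamental class of $\Lambda$, so $\partial^\varepsilon x_{\max}=0$; dually $x_{\min}$ is a cocycle. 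For Part (4), when $x$ is the maximum occurring as a \emph{negative} puncture the attached flow line is a negative gradient line \emph{converging} to the maximum, hence constant, so $\partial u_0$ passes through that critical point; placing the maximum (and minimum) near $\pt_i$ then forces $\partial u_0$ through the ray $\R\times\pt_i$, with $u_0\in\mathcal{M}^n(c)$ by the dimension count above, and symmetrically for the minimum as a positive puncture. The main obstacle is the geometric core: establishing the clean, transverse, bijective correspondence between disks on $\overline{\Lambda}$ and (holomorphic disk on $\Lambda$)$\,+\,$(gradient flow tree) data — the Morse--Bott compactness, the exclusion of boundary bubbling and of degenerations other than flow‑tree breaking, and the gluing with correct orientations; this is where the transversality of the $\mathcal{M}^{j}(c)$ below the relevant index and the $C^2$‑smallness of $f$ enter, and where one leans on \cite[Theorem 3.6]{Ekholm:Duality} and \cite[Proposition 2.9]{EES05b}. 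A secondary subtlety is the orientation bookkeeping in the last step, in particular the cancellation making $x_{\max}$ a cycle, which genuinely requires a single augmentation on both components.
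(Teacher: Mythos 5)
Your proposal is correct and takes essentially the same route as the paper: the paper's proof is a string of citations to the two-copy/Morse--Bott correspondence of Ekholm's duality paper (Parts (3)--(4) of Theorem 3.6 with the dimension formula (3.11), Theorem 5.5 and Remark 5.6) together with Lemma 4.21 of \cite{CDRGG18}, which is exactly the geometric core you lean on, supplemented by the action bookkeeping that the paper's surrounding setup already provides. The only spot to tighten is the cycle/cocycle claim in (3): the ``trivial cylinder filling identifies the max with the fundamental class'' heuristic is not a chain-level argument --- the actual mechanism is that, for each pure chord $c$, the two small triangles attached to its two endpoints contribute to the coefficient of $p_c^-$ with the same augmentation weight on the two copies and hence cancel (this is precisely the content of the cited Lemma 4.21 of \cite{CDRGG18}), a point you do flag yourself as the remaining subtlety.
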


\begin{proof}

(1): This is Part (4) of \cite[Theorem 3.6]{Ekholm:Duality}.

(2): This is Part (3) of \cite[Theorem 3.6]{Ekholm:Duality}, which identifies the disks on the two-copy with appropriate generalized pseudoholomorphic disks on one copy, together with the dimension formula \cite[(3.11)]{Ekholm:Duality} for the generalized pseudoholomorphic disks.

(3): The first part is Theorem 5.5 and Remark 5.6 of \cite{Ekholm:Duality}. The second part follows analogously, see e.g.~Lemma 4.21 of \cite{CDRGG18}.

(4): This follows as a special case in the proof of (2). Note that the moduli spaces $\mathcal{M}^n(c)$ defined for different choices of point constraints $\pt'_i$ all can be canonically identified, if we assume that the moduli space defined with point constraint at $\pt_i \in \Lambda$ is transversely cut out and that $\pt'_i$ is sufficiently close to $\pt_i.$

\end{proof} 

 We consider the linearized Legendrian contact homology complex as defined in Section \ref{sec:FilteredInvariance}, determined by the choice of ordering $\Lambda_0=\Lambda$ and $\Lambda_1=\Lambda'$ of the components of $\bar{\Lambda}.$ 
 Define $\bar{\Lambda}(t)$ by fixing $\Lambda$ and apply the $N$-vertical-shift of the isotopy to $\Lambda'.$ In this manner we may assume that the component $\Lambda$ is contained outside of the support of $H_t$.

In the following we endow $\mathcal{A}^l(\overline{L})$ with the same augmentation on both components, and suppress it from notation. Proposition \ref{prp:ModuliSpaceLift} (1) implies that $LCC_*^{l}(\bar{\Lambda})^{N+\epsilon}_{N-\epsilon}$ is quasi-isomorphic to the Morse homology of $\Lambda.$
These homology classes need not survive in $LCC_*^{l}(\bar{\Lambda})$ when increasing the action range.
(For example, if $\Lambda$ has an augmentation and can be pushed off its Reeb-flow image, then $LCC_*^\infty(\bar{\Lambda})^{+\infty}_{-\infty}$ is acyclic.)
So if $x$ (a linear combination of Morse chords) represents a degree $k$ homology generator of this Morse subcomplex, its action value
is a starting or ending point of a bar (under an association from Part (1) of Lemma \ref{lma:barcode}) in the barcode for $LCC_*^l(\bar{\Lambda})^b_a$ for any $b-a <l,$ $a+\epsilon \le N \le b- \epsilon$.

Suppose $\epsilon \ll \|H_t\|_{\OP{osc}} < \min(l, \sigma_k)-\epsilon.$ Recall $l(t)$ from equation (\ref{eq:l(t)}).
 Let $x_1 = x, x_2, \ldots, x_n$ denote representatives of the homology classes of degree $k'$ such that $\sigma_{k'} \ge \sigma_k.$ We wish to show that $\bar{\Lambda}(1)$ has at least $n$ mixed Reeb chords. 
For each $x_i$ 
we consider two one-parameter families of barcodes
$$\mathscr{B}^{i,\pm}_t = \mathscr{B}\left( LCC_{*}^{l - l(t)}(\bar{\Lambda}(t))^{b^{i,\pm}_t}_{a^{i,\pm}_t}\right),$$
where both action windows are shrinking at a rate of $l'(t)$ and their small overlap contains the start or endpoint of a bar ``associated to $x_i$"; the precise recipe for how to construct $a^{i,\pm}_t$ and $b^{i,\pm}_t$, thereby giving rise to the associated pair of families of barcodes, will be described below. 

First we make a (generic) assumption that the critical values $\ell(x_i)$ of all Morse critical points are distinct.
Next, we require that
\begin{align}
\notag [a^{i,-}_0, b^{i,-}_0) & = [\ell(x_i) - \min(\sigma_k, l) +2\epsilon, \ell(x_i)+ \epsilon), \\ 
\notag
[a^{i,+}_0, b^{i,+}_0) & = [\ell(x_i) - \epsilon, \ell(x_i)+ \min(\sigma_k, l) -2\epsilon),
\end{align}
 
which uniquely determines $a^{i,\pm}_0$ and $b^{i,\pm}_0$. 
Note that we fix the action window size based to $k,$ independent of $i.$
When extending these parameters to $a^{i,\pm}_t$ and $b^{i,\pm}_t$ we require
\begin{align}
\label{eq:BarPoints1}
& b^{i,\pm}_t - a^{i,\pm}_t = b^{i,\pm}_0 - a^{i,\pm}_0- l(t) < l - l(t),\\
\label{eq:BarPoints2}
&b^{i,-}_t - a^{i,+}_t=2\epsilon.
\end{align}
We uniquely determine $a^{i,\pm}_t$ and $b^{i,\pm}_t$ once we specify $\ell_i(t)$ and require
\begin{equation}
\label{eq:BarPoints3}
\ell_i(t) = \frac{b^{i,-}_t + a^{i,+}_t}{2}.
\end{equation}
Note that $\ell_i(0)=\ell(x_i)$.

The condition \eqref{eq:BarPoints1} enables us to use Proposition \ref{prp:Bifurcation}, which implies the partially linearized complex can be assumed to constitute a PWC family of filtered complexes in the sense of Section \ref{sec:ContFamily}. This in turn allows us to apply Proposition \ref{prp:Barcode} to $\mathscr{B}^{i,\pm}_t.$ Because we have not yet specified $\ell_i(t),$ the barcodes $\mathscr{B}^{i,\pm}_t$ still remain unspecified for $t>0.$

%In a somewhat convoluted manner, we next proceed to uniquely determine $b^{i, \pm}_t, a^{i,\pm}_t,$ and thus $\mathscr{B}^{i,\pm}_t,$ for increasing $t>0.$

By the genericity argument in the proof of Lemma \ref{lma:GenericIsotopy} we may achieve the following.
\begin{itemize}
\item The lengths of the Reeb chords are all distinct during the isotopy, except for a finite set of ``singular" times $t_j$ when precisely two lengths coincide.
\item At each such $t_j$, the $t$-derivatives of these two Reeb chord lengths are distinct, and $b^{i,\pm}_{t_j}, a^{i,\pm}_{t_j}$ are distinct from all Reeb chord lengths.
\end{itemize}
In particular, it follows that the starting points or end points of the bars in $\mathscr{B}^{i,\pm}_t$ do not intersect for any $t \neq t_j$. 
%We can further achieve the property that at most two chords share the same lengths at each such time $t_j$, while in this case the $t$-derivatives of their lengths are distinct.
In the components of the complement of this discrete set $\bigcup_j\{ t_j\}$ the individual bars can thus be uniquely singled out by the location of their starting points, which depend continuously on $t$ away from exits/entrances. Thus, Lemma \ref{lma:barcode} (1) implies that away from $\bigcup_j\{ t_j\}$ there are canonically determined Reeb chords (the compatible basis elements that correspond to the starting points) associated to each bar.

\begin{lma}
\label{lma:InfiniteBars}
Consider an interval $[T_0,T_1)$ disjoint from the singular times $\bigcup_j \{t_j\}.$
Assume that $b^{i,\pm}_t, a^{i,\pm}_t$ and $\ell_i(t)$ have been defined for all $t \in [T_0,T_1]$ satisfying Equations \eqref{eq:BarPoints1}--\eqref{eq:BarPoints3}. 
Moreover, assume $\ell_i(t)$ coincides with the length of a Reeb chord for all $t \in [T_0,T_1].$ 
If $\ell_i(T_0)$ is the starting point of an infinite bar in each of the two $\mathscr{B}^{i,\pm}_{T_0},$ then for all $t \in [T_0,T_1],$ $\ell_i(t)$ is the starting point of an infinite bar in each of the two $\mathscr{B}^{i,\pm}_{t}.$ 
\end{lma}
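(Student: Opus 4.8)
The idea is to run a continuity/persistence argument across the interval $[T_0,T_1)$, using the fact that this interval contains no singular time $t_j$. Since $[T_0,T_1)$ is free of singular times, the only bifurcations that the partially linearized complexes $LCC_*^{l-l(t)}(\bar\Lambda(t))^{b_t^{i,\pm}}_{a_t^{i,\pm}}$ can undergo are handle-slides, birth/deaths, and entrances/exits of Reeb chords at the top or bottom of the action windows (moments of type (M) in Proposition \ref{prp:Bifurcation}; the type (P) moments involving pure chords do not destroy the PWC structure by the argument there, and in any case only produce the harmless isomorphisms from the last paragraph of that proof). By Proposition \ref{prp:Barcode} the barcode therefore changes in a completely controlled way. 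The key structural input is the hypothesis that $\ell_i(t)$ coincides with a genuine Reeb chord length for all $t\in[T_0,T_1]$, so that by Lemma \ref{lma:barcode}(1) there is at each non-singular $t$ a well-defined bar in each of $\mathscr{B}^{i,\pm}_t$ whose starting point sits at $\ell_i(t)$; call these the two \emph{distinguished bars}. First I would observe that, away from $\bigcup_j\{t_j\}$ and using the second genericity bullet (the window endpoints $a_t^{i,\pm},b_t^{i,\pm}$ avoid all Reeb chord lengths), the distinguished bars are unambiguously singled out by their starting points and vary continuously in $t$; in particular they cannot be born, die, or have their starting point exit through the bottom (that would require $\ell_i(t)=a_t^{i,\pm}$, forbidden by genericity) or through the top.

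Next I would track what can happen to the \emph{endpoint} of each distinguished bar along $[T_0,T_1)$, given that at $T_0$ both distinguished bars are infinite. The point of Equations \eqref{eq:BarPoints1}--\eqref{eq:BarPoints3} is that the two windows shrink at exactly the rate $l'(t)$, which by Lemma \ref{lma:main} is at least the rate at which any Reeb chord length can change; consequently, whenever a chord of length $\ell_i(t)$ would acquire a partner chord of shorter length that could close off the bar, that partner would first have to leave the shrinking window through the bottom. More precisely: an infinite distinguished bar in $\mathscr{B}^{i,-}_t$ can only become finite at a handle-slide or birth/death that pairs the basis element at level $\ell_i(t)$ with one of strictly smaller action; but for such a pairing to change the barcode the partner must lie in the window $[a_t^{i,-}, b_t^{i,-})$, and an action/speed comparison using \eqref{eq:BarPoints1} and Lemma \ref{lma:main} shows it cannot — exactly the mechanism already used at the end of the proof of Proposition \ref{prp:Bifurcation}. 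A symmetric argument (passing to the Poincar\'e--Ekholm dual, or directly, using that $\mathscr{B}^{i,+}_t$ has window $[a_t^{i,+}, b_t^{i,+})$ extending \emph{upward}) handles $\mathscr{B}^{i,+}_t$. Thus neither distinguished bar can transition from infinite to finite on $[T_0,T_1)$; combined with the fact that an exit-above or exit-below of the distinguished chord is forbidden and an infinite bar is never \emph{created} finite-to-infinite at the level $\ell_i(t)$ unless that chord exits, the infiniteness propagates from $T_0$ to all $t\in[T_0,T_1)$. A short compactness/induction over the finitely many bifurcation moments in $[T_0,T_1)$ upgrades this to the closed claim, and passing to the limit $t\to T_1^-$ (legitimate since $\ell_i$ and the window endpoints are continuous up to $T_1$) gives the statement at $t\in[T_0,T_1)$ as asserted.

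\textbf{Main obstacle.} The delicate point is not any single bifurcation in isolation — each is handled by Proposition \ref{prp:Barcode} — but rather verifying that the \emph{identity} of the distinguished bar is preserved as one passes through a handle-slide or birth/death occurring elsewhere in the complex. One must argue that such a bifurcation, although it may change the differential, cannot surreptitiously reroute the bar starting at $\ell_i(t)$ to a finite endpoint: this is exactly the action-window inequality $b_t^{i,\pm}-a_t^{i,\pm} < l - l(t)$ from \eqref{eq:BarPoints1} together with the rate comparison of Lemma \ref{lma:main}, ensuring that any chord that could cancel against the distinguished one is already outside the window. Making this bookkeeping precise — keeping careful track of which chord has which action at a bifurcation moment, and confirming the partner chord's action genuinely lies outside $[a_t^{i,\pm}, b_t^{i,\pm})$ — is the technical heart of the argument, and it is where the specific choices in \eqref{eq:BarPoints1}--\eqref{eq:BarPoints3} are used in an essential way.
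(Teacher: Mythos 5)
Your skeleton does match the paper's strategy: on an interval free of the singular times $t_j$ the barcode can only change by the bifurcations listed in Proposition \ref{prp:Barcode}, and one compares the rate of change of Reeb chord lengths from Lemma \ref{lma:main} with the rate $l'(t)$ at which the windows shrink via \eqref{eq:BarPoints1}. But you misidentify the dangerous bifurcations. By Proposition \ref{prp:Barcode} a handle-slide never changes the barcode and a birth/death only adds or removes one short bar, so neither can render an already existing infinite bar finite; the only threats are the entry of a generator into the window from above (the bar acquires an endpoint near $b_t$) or from below (the infinite bar starting at $\ell_i(t)$ is re-routed into a finite bar with endpoint $\ell_i(t)$), and these are what the paper's proof addresses. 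Your ``more precisely'' sentence instead blames handle-slides/birth-deaths, and the phrase about a ``partner chord of shorter length that could close off the bar'' which ``would first have to leave the shrinking window through the bottom'' has the directions reversed: a bar starting at $\ell_i(t)$ is closed off by a chord of \emph{larger} action, and exits below create or preserve infinite bars --- entries are the danger.

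The substantive gap is your claim that the speed comparison handles each window separately, ``symmetrically''. By \eqref{eq:BarPoints2}--\eqref{eq:BarPoints3}, $\ell_i(t)$ sits at \emph{constant} distance $\epsilon$ from the top of the minus window and from the bottom of the plus window, while only the opposite ends recede from $\ell_i(t)$ at rate $l'(t)$. The estimate $\tfrac{d}{dt}(\beta_t-\alpha_t)\ge -l'(t)$ (which, note, uses that $H_t$ vanishes on the component $\Lambda$ carrying the chords' starting points --- a point you omit) therefore only forbids entries across the receding far ends: entry below for $\mathscr{B}^{i,-}_t$ and entry above for $\mathscr{B}^{i,+}_t$. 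Nothing in your argument prevents a chord from crossing the near boundary, i.e.\ entering the minus window from above at level $\approx \ell_i(t)+\epsilon$ or the plus window from below at level $\approx \ell_i(t)-\epsilon$; your assertion that ``any chord that could cancel against the distinguished one is already outside the window'' is simply false for these. Excluding that such a near-side entry terminates (resp.\ re-routes) the distinguished bar is precisely where the \emph{two} windows must be used jointly --- the paper's ``location of $\ell_i(t)$ within the two action windows'': for instance, a chord entering the minus window at $\approx\ell_i(t)+\epsilon$ and killing the class born at $\ell_i(t)$ there already lies inside the plus window, and the same algebraic relation, pushed through the quotient to the plus-window complex, would force the plus bar at $\ell_i(t)$ to be finite, contradicting its persistence (and symmetrically for an entry below into the plus window). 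This interplay, which is the very reason the pair $\mathscr{B}^{i,\pm}_t$ is introduced, is absent from your proposal, so the argument does not close at its technical heart.
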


\begin{proof}%[Proof of Lemma \ref{lma:InfiniteBars}]
 There is an infinite bar with starting point at $\ell(T_0)$ in each of the two barcodes $\mathscr{B}^{i,\pm}_{T_0}$ by assumption. We need to show that they remain infinite for all $t \in [T_0,T_1]$. By Proposition \ref{prp:Barcode} an infinite bar can only become finite if an endpoint suddenly appears above (entry above), or if the bar switches behavior to a finite bar with endpoint at $\ell_i(t)$ (entry below). In either case, we show that these bifurcations are not possible by a consideration of the needed Hamiltonian oscillation.

 Suppose $\alpha_t < \beta_t$ denote the lengths of two arbitrary mixed chords of $\bar{\Lambda}(t).$ Lemma \ref{lma:main} and the fact that $H_t$ vanishes on $\Lambda_0(t) = \Lambda$ (which contains the Reeb chords' start points) then imply 
$$-\left|\frac{d}{dt}\alpha_t\right| \ge \min_{\Lambda_1(t)} H_t \ge \min_{\bar{\Lambda}(t)} H_t, \quad 
\left|\frac{d}{dt} \beta_t\right| \le \max_{\Lambda_1(t)} H_t \le \max_{\bar{\Lambda}(t)} H_t \longrightarrow
\frac{d}{dt} (\beta_t - \alpha_t) \ge -\frac{d}{dt}l(t).$$ Using Lemma \ref{lma:barcode} Part (1), we consider the two cases when precisely one of $\alpha_t$ or $\beta_t$ is equal to $\ell_i(t)$, while the other is the length of the hypothetical chord which enters from either above or below. Since the size of the action window changes at a rate $\frac{d}{dt} (b^{i,\pm}_t - a^{i,\pm}_t) = -\frac{d}{dt} l(t),$ the above bound combined with the location of $\ell_i(t)$ within the two action windows $[a^{i,\pm}_t, b^{i,\pm}_t),$ proves that for either action window, a generator cannot enter to make finite the infinite bar starting at $\ell_i(t).$ 

\end{proof}

We claim that Lemma \ref{lma:InfiniteBars} implies that the chord of length $\ell_i(T_0)$ at $t=T_0$ cannot undergo a death at any time $T_2 \in [T_0,t_j]$, where $t_j$ is minimal singular time such that $t_j>T_0$.
Suppose the chord dies at $T_2 \in [T_0,t_j].$ 
Extend $\ell_i(t)$ to $[T_1, T_2]$ (if $T_2 > T_1$) by letting $\ell_i(t)$ be the length of (the deformations of) this Reeb chord for $t \in [T_1, T_2].$
Extend $b^{i,\pm}_t$ and $a^{i,\pm}_t$ for $t \in [T_1, T_2]$ using equations (\ref{eq:BarPoints1})--(\ref{eq:BarPoints3}).
Lemma \ref{lma:InfiniteBars} (replacing $[T_0,T_1]$ by $[T_0,T_2]$) implies the chord is the starting point of an infinite (in particular non-zero length) bar at $T_2,$ and therefore cannot die.
So the chord survives to $t_j.$
Extend $\ell_i(t), b^{i,\pm}_t, a^{i,\pm}_t$ to $[T_1, t_j]$ in the identical manner as it was extended to $[T_1,T_2].$
Lemma \ref{lma:InfiniteBars} (replacing $[T_0,T_1]$ by $[T_0,t_j]$)
implies the induced pair $\mathscr{B}^{i,\pm}_t$ of families of barcodes has infinite bars with starting points at $\ell_i(t)$ for all $t \in [T_0,t_j]$. 

We next extend $\ell_i(t)$ continuously through (past) $t_j$ as follows. By the above 
 $\mathscr{B}^{i,+}_{t_j}$ has one or two infinite bars with starting point $\ell_i(t_j)$. 
 (For sufficiently small $\epsilon,$ both bars in $\mathscr{B}^{i,-}_{t_j}$ starting at $\ell_i(t_j)$ are infinite by equations (\ref{eq:BarPoints2}) and (\ref{eq:BarPoints3}).) 
We treat separately these two cases.

\emph{Case (i) (one infinite bar):} There are two chords of the same length at $t_j$ so there are two possibilities for a continuous extension of $\ell_i(t)$ to $t>t_j.$ One is $C^1$ and the other not. Proposition \ref{prp:Barcode} together with the assumption that no entrance/exit can happen near $t_j$ implies there is precisely one such extension for which the bar with starting point at $\ell_i(t)$ remains infinite in $\mathscr{B}^{i,+}_{t_j+\delta}$ for all sufficiently small $\delta \ge 0.$
Choose this infinite extension for $\ell_i(t)$.

\emph{Case (ii) (two infinite bars):} We require that $\ell_i(t)$ is the unique $C^1$-extension over $t_j.$ This is well-defined since the two Reeb chords' lengths have distinct $t$-derivatives at $t_j.$ Proposition \ref{prp:Barcode} together with the assumption that no entrance/exit can happen near $t_j$ implies that $\mathscr{B}^{i,+}_{t_j+\delta}$ contains an infinite bar that starts at $\ell_i(t+\delta)$ for all sufficiently small $\delta \ge 0$.

%In addition, this $t$-dependence is smooth except possibly at exits/entrances.
% The above properties of the derivative of the lengths of Reeb chords at the discrete times $t_j$ implies that there is a unique smooth extension of the family of bars to include also the times $t_j$; smoothness only fails when a starting point exits or becomes the endpoint of a bar. To summarize; for any continuous family of bars which do not undergo any birth/deaths or entrances/exits, we can associate a unique family of smoothly varying Reeb chords of the same cardinality (which arise as starting points of the same bars).

%Let $\ell_i(t)$ denote the start point (which a priori, could at some time become a bar endpoint if an appropriate generator enters from below) of the unique \color{red}one-family \color{black} of bars in $\mathscr{B}^{i,\pm}_{t_j}$ satisfying $\ell_i(0) = \ell(x_i)$; this global \color{red}one-family \color{black} is built in the previous paragraph. To complete the description of $\mathscr{B}^{i,\pm}_t$ begun in (\ref{eq:BarPoints}), we finally set 
%$$b^{i,-}_t - a^{i,+}_t = 2 \epsilon, \quad \frac{b^{i,-}_t + a^{i,+}_t} {2}= \ell_i(t).$$

By induction we can then construct $\ell_i(t)$ for all $t \in [0,1]$. Once we show that (generically) $\ell_i(1) \ne \ell_\iota(1)$ for all $i \ne \iota,$ the number of $x_i$ is a lower bound for the number of mixed chords of $\bar{\Lambda}(1).$ This implies the theorem. 

Suppose $\ell_i(1) = \ell_\iota(1).$ 
Then for some $t_j$, $\ell_i(t_j)=\ell_\iota(t_j)$ and $\ell_i(t_j + \delta) = \ell_\iota(t_j + \delta),$ while $\ell_i(t_j - \delta) \ne \ell_\iota(t_j - \delta)$ holds for all sufficiently small $\delta>0$.

We first claim that the barcode $\mathscr{B}^{i,+}_{t_j}= \mathscr{B}^{\iota,+}_{t_j}$ has two infinite bars with starting point $\ell_i(t_j).$ This is a consequence of the stability of the finite bars as shown in Proposition \ref{prp:Barcode} together with the fact that $\mathscr{B}^{i,+}_{t_j-\delta}$ and $\mathscr{B}^{\iota,+}_{t_j-\delta}$ have infinite bars which start at the two distinct levels $\ell_i(t_j - \delta)$ and $\ell_\iota(t_j - \delta)$, respectively. We now reach a contradiction with our continuous extension convention, by which the bars for $t_j-\delta$ in $\mathscr{B}^{i,+}_{t_j-\delta}$ and $\mathscr{B}^{\iota,+}_{t_j-\delta}$ have distinct $C^1$-extensions through $t_j$.
%\qed

\subsection{Proof of Lemma \ref{lma:sharp}}
\label{pf:sharp}

Consider the contact isotopy
$$(\mathbf{x},\mathbf{y},z) \mapsto (\mathbf{x},(1-t)\mathbf{y},(1-t)z), \:\: t \in [0,1).$$
Its contact Hamiltonian is equal to $H_t(\mathbf{x},\mathbf{y},z)=-\frac{z}{1-t}$. Using this contact isotopy we can thus rescale the subset $\{|z|\le a/2\}$ to its image of the time-$s$ map for any $0 \le s <1.$ We compute the total oscillation for the isotopy $t \in [0,s],$ $0\le s<1$ restricted to the image of the subset $\{|z|\le a/2\}$ to be equal to
$$\|H_t\|_{\OP{osc}}(\{|z|\le a/2\}) =\int_0^s \left(\frac{(1-t)(a/2)}{1-t}-\frac{(1-t)(-a/2)}{1-t}\right)dt =sa.$$
In other words, since $\Lambda \subset \{|z| \le a/2\}$ by assumption, taking $1-s>0$ to be sufficiently small, we may move $\Lambda$ into an arbitrarily small neighborhood of $\{\mathbf{x} \in D^{ n}, \mathbf{y}=0, z=0\}$ by a contact isotopy of total oscillation equal to $sa <a.$
For any $\delta>0,$ any sufficiently thin neighborhood of $\{\mathbf{x} \in D^{ n}, \mathbf{y}=0, z=0\}$ is displaceable from the subset $D^n \times \R^n \times \R$ by a Hamiltonian of oscillation at most $\delta.$ (Take an ordinary smooth isotopy of $\R^n$ which displaces $D^n \subset \R^n$ and lift it to the jet-space $J^1\R^n=\R^n \times \R^n \times \R$; the corresponding contact Hamiltonian vanishes along the entire zero section and is thus very small near it.) Since $\Lambda \subset D^n \times \R^n \times \R$, the composition creates the sought displacement. In other words, we have shown that $\Lambda$ is displaceable with a total oscillation $a+\epsilon.$ Appropriate cut-offs using smooth bump functions can then be used to make the contact Hamiltonians compactly supported.
\qed

\section{Proof of Theorem \ref{thm:NonSqueeze}}
\label{sec:NonSqueeze}
 Recall that two Legendrians are said to be formally isotopic if they are connected by a smooth isotopy covered by a Lagrangian bundle monomorphism, called the formal tangent map, such that there exists a homotopy relative endpoints from this formal tangent map to the canonical inclusion of the tangent planes (i.e.~the one induced by the smooth family of embeddings) \cite[Definition 1.1]{Murphy:Loose}.

\begin{lma}
\label{lma:FormalIsoRelPt}
Assume that two Legendrian spheres $\Lambda,\Lambda' \subset \R^{2n+1},$ $n \ge 2,$ agree in a neighborhood of a point, and that they are formally Legendrian isotopic. Then there exists a formal isotopy which is fixed in a possibly smaller neighborhood of the same point, in the sense that the underlying smooth isotopy, the Lagrangian frames, as well as the homotopy from the formal tangent map to the inclusion of the tangent planes, all can be taken to be constant there.
\end{lma}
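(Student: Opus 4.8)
The plan is to upgrade an arbitrary formal isotopy between $\Lambda$ and $\Lambda'$ (which agree near a point $p$) to one that is genuinely constant near $p$, by a two-stage homotopy: first make the underlying smooth isotopy constant near $p$, and then correct the Lagrangian-frame data and the homotopy-of-tangent-maps data over that now-stationary neighborhood.

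\emph{Step 1: straighten the smooth isotopy near $p$.} Let $f_t\colon S^n \to \R^{2n+1}$ be the smooth isotopy realizing the formal isotopy, with $f_0$ parametrizing $\Lambda$, $f_1$ parametrizing $\Lambda'$, and $f_0=f_1$ on a neighborhood $U$ of (the preimage of) $p$. The isotopy extension theorem gives an ambient isotopy $\Psi_t$ of $\R^{2n+1}$ with $\Psi_t\circ f_0 = f_t$; since $f_t$ is fixed on $U$ we can arrange $\Psi_t$ to be supported away from $p$, but this is not yet enough because the \emph{relative} isotopy $f_t$ can still wander off $\Lambda$ near $p$ through an intermediate excursion. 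The standard fix is to reparametrize in the $t$-variable and apply a ``compression'' argument: choose a smaller neighborhood $V \Subset U$ and a cutoff, and replace $f_t$ by $f_{\rho(x,t)}$ for a suitable $\rho$ that equals $t$ outside $U$ and equals $0$ on $V$, interpolating so that the modified map is still an isotopy of embeddings. Because $n\ge 2$, the relevant spaces of embeddings and the configuration of the excursion are connected/simply-connected enough that this reparametrization can be performed without creating self-intersections; concretely one invokes the standard relative version of isotopy extension for embeddings of manifolds with boundary (here, $S^n$ with a disk removed, rel that disk). After this step the smooth isotopy is literally constant on $V$.

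\emph{Step 2: straighten the formal tangent data.} Now the underlying smooth isotopy is constant near $p$, so the ``canonical inclusion of tangent planes'' is constant near $p$ as well. The formal tangent map $F_t$ is a path in the space of Lagrangian bundle monomorphisms $TS^n \to f_t^*T\R^{2n+1}$ lifting $df_t$; restricted to $V$ it is a loop (based at $t=0$ and $t=1$ it must agree with the canonical inclusion, since $\Lambda,\Lambda'$ agree there) in the fiberwise Lagrangian Grassmannian bundle over $V$, i.e.\ essentially a loop in $\OP{Map}(V, U(n)/O(n))$. To kill it rel endpoints we need $\pi_1$ of this mapping space — equivalently, by obstruction theory, the relevant cohomology groups $H^i(V; \pi_{i+1}(U(n)/O(n)))$ — to cooperate; since $V$ is contractible this is immediate, and the loop is null-homotopic rel endpoints. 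Finally, the homotopy $G_{s,t}$ from $F_t$ to the canonical inclusion is a $2$-parameter family; over $V$ it is a disk in the same mapping space with prescribed boundary, and the same contractibility of $V$ (now one obstruction dimension higher, needing $\pi_2$ of the Lagrangian Grassmannian, which is handled again because $V$ is contractible) lets us homotope $G$ rel its boundary to be constant over $V$. Concatenating these homotopies of the formal-isotopy data produces a formal isotopy that is constant on $V$ in all three pieces of data, as required.

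\emph{Main obstacle.} The genuine content is Step 1: making the \emph{smooth} isotopy constant near $p$ rel that neighborhood, while keeping it an isotopy through embeddings. Step 2 is soft once Step 1 is done, because the ambient neighborhood $V$ can be taken contractible and all the relevant obstruction groups then vanish; the hypothesis $n\ge 2$ is used precisely to guarantee enough connectivity of the spaces of embeddings in Step 1 (for $n=1$ a loop in $U(1)/O(1)=S^1$ over a point need not be trivial, and knotting phenomena also intervene). I would cite the relative isotopy-extension/compression machinery for Step 1 and dispatch Step 2 by the stated contractibility plus elementary obstruction theory.
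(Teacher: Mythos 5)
Your Step 2 is where the genuine gap lies, and it is exactly the point on which the paper's proof has to work hardest. Contractibility of $V$ does not make the relevant obstruction vanish: for contractible $V$ the mapping space $\mathrm{Map}(V,U(n)/O(n))$ is homotopy equivalent to $U(n)/O(n)$ itself, and even by your own obstruction formula the $i=0$ term $H^0(V;\pi_1(U(n)/O(n)))\cong\Z$ survives. The Lagrangian Grassmannian $U(n)/O(n)$ (and likewise the space of Lagrangian monomorphisms $\R^n\to(\R^{2n},\omega_0)$, which is homotopy equivalent to $U(n)$) has $\pi_1\cong\Z$ for \emph{every} $n$, not only $n=1$; you are conflating this with the high connectivity of the Stiefel-type space of $n$-frames in $\R^{2n+1}$, which is what makes your Step 1 (the smooth isotopy) soft. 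Concretely, once the smooth isotopy is constant near $p$, the Lagrangian part of the formal data restricted to $p$ traces a loop, based at the fixed Legendrian plane of $\Lambda$, in the space of Lagrangian frames; to stay within formal Legendrian isotopies this loop must be contracted \emph{inside} that space, and its class in $\pi_1\cong\Z$ (a Maslov-type index) need not vanish. So the assertion ``the loop is null-homotopic rel endpoints'' is unjustified, and the argument stops there.

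The paper's proof confronts precisely this issue: it observes that since $\pi_1(U(n))=\Z$ the loop of Legendrian tangent frames at the point is not automatically contractible, and it removes the class by post-composing the formal isotopy with a suitable $S^1$-family of contactomorphisms of $\R^{2n+1}$ obtained as lifts of linear symplectomorphisms in $U(n)\subset\Symp(\R^{2n},\omega_0)$, using that contactomorphisms act naturally on formal Legendrian isotopies. Your Step 1, by contrast, is essentially unproblematic (the paper handles it by an ambient contact isotopy contracting the loop traced by the point, followed by a local correction via the $h$-principle for open Legendrian embeddings): in codimension $n+1$ the loop of embedding germs at $p$ lives in an $n$-connected space, so that is not where the difficulty sits. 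To repair your proof you must add the paper's trick, or some equivalent global move that shifts the Maslov class of the frame loop at $p$; local obstruction theory over the contractible neighborhood cannot do it.
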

\begin{proof}
One can readily construct an ambient contact isotopy which deforms the initial formal isotopy to one whose underlying smooth isotopy fixes the given point. (Start by smoothly contracting the loop traced out by the point, then use the fact that any family of tangent vectors can be extended to a family of contact vector fields, finally we can now construct a contact isotopy which contracts the loop.) Note that contact isotopies act naturally on formal isotopies by post-composition, and our formal Legendrian isotopy thus may be assumed to fix the point (but not necessarily its neighborhood or even its tangent plane).

After a subsequent deformation by-hand near this point or, alternatively, an application of the one-parametric $h$-principle for open Legendrian embeddings, we may in addition assume that the smooth isotopy near the given point stays Legendrian throughout the isotopy, and that the homotopy from the formal tangent map to the inclusion of the tangent planes is trivial at the point. (We can e.g.~make the embeddings coincide with the Legendrian planes prescribed by the formal bundle morphism near the point.)

What remains is to make the isotopy fix a whole neighborhood of the point. Unfortunately, since $\pi_1(U(n))=\Z$ is non-trivial, the loop of Legendrian tangent frames at the given point is not automatically contractible. However, a final application of a suitable $S^1$-family of contactomorphisms of $\R^{2n+1}$ obtained as lifts of symplectomorphisms from $ U(n) \subset \OP{Symp}(\R^{2n},\omega_0)$, the latter frame over the point may be assumed to give rise to a contractible loop in $U(n).$ Again, since these are contact transformations, they act naturally on formal Legendrian isotopies by post-composition.
\end{proof}

\begin{lma}
\label{lma:cuspsum}
Let $\Lambda$ be an arbitrary Legendrian embedding of dimension $n \ge 2,$ and let $\Lambda'$ be a Legendrian formally isotopic to $\Lambda_{\OP{St}}.$ 
The cusp connected sum between $\Lambda$ and $\Lambda'$ is formally Legendrian isotopic to $\Lambda.$
\end{lma}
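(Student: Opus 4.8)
The plan is to reduce the statement to the standard fact that the cusp connected sum with a \emph{standard} Legendrian is a formally trivial operation, and then to transport this along the formal isotopy $\Lambda' \simeq \Lambda_{\OP{St}}$ using Lemma \ref{lma:FormalIsoRelPt}. The cusp connected sum $\Lambda \# \Lambda'$ is performed inside a small Darboux ball $B$ meeting $\Lambda$ along a disc $D_0$ and meeting a small translated-and-scaled copy of $\Lambda'$ along a disc $D_0'$, with the two sheets joined by the standard cusp-edge handle; everything away from $B$ is unchanged. So it suffices to produce a formal Legendrian isotopy, supported in a slightly larger ball, from $(\Lambda \# \Lambda') \cap B$ to $D_0$.

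First I would record the classical observation (going back to the front-projection surgery picture, cf.\ \cite{EES05b, Murphy:Loose}) that the cusp connected sum of \emph{any} Legendrian $\Lambda$ with the standard Legendrian sphere $\Lambda_{\OP{St}}$ itself is \emph{honestly} Legendrian isotopic to $\Lambda$: one simply shrinks the standard-sphere summand down to the tip of the cusp edge and cancels it. This gives the result when $\Lambda' = \Lambda_{\OP{St}}$ exactly. Next, since $\Lambda'$ is only \emph{formally} isotopic to $\Lambda_{\OP{St}}$, I would invoke Lemma \ref{lma:FormalIsoRelPt}: the cusp connected sum only uses $\Lambda'$ in a neighborhood of the single attaching point $p' \in \Lambda'$, so after a preliminary honest contact isotopy I may arrange that $\Lambda'$ agrees with $\Lambda_{\OP{St}}$ on a neighborhood of $p'$, and then that the formal isotopy $\Lambda' \simeq \Lambda_{\OP{St}}$ is \emph{constant} on a (possibly smaller) neighborhood of $p'$ — the underlying smooth isotopy, the Lagrangian frames, and the homotopy of the formal tangent map all fixed there. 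The cusp handle is glued precisely in that fixed neighborhood.

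With this arranged, the formal isotopy $\Lambda' \simeq \Lambda_{\OP{St}}$, being constant near the attaching region, extends over the cusp handle by the identity and hence promotes to a formal isotopy $\Lambda \# \Lambda' \simeq \Lambda \# \Lambda_{\OP{St}}$; composing with the honest isotopy $\Lambda \# \Lambda_{\OP{St}} \simeq \Lambda$ from the previous paragraph gives a formal isotopy $\Lambda \# \Lambda' \simeq \Lambda$, as desired. (One must check at the level of formal data — Lagrangian frames and the homotopy of tangent maps — that gluing the identity formal isotopy over the handle region to the given formal isotopy on the rest is well-defined and continuous; this is straightforward because the two pieces agree on the overlap by construction, the overlap being where the formal isotopy is constant.) The main obstacle is the frame-matching bookkeeping: one needs Lemma \ref{lma:FormalIsoRelPt} precisely because $\pi_1(U(n)) = \Z$ could otherwise obstruct making the formal isotopy constant near $p'$, and one must make sure that the cusp-edge handle, together with its Legendrian framing, is compatible with the \emph{fixed} tangent plane at $p'$ — i.e.\ that the standard local model for the cusp sum is set up relative to exactly that plane. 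Once the local models are aligned, the rest is a routine gluing of formal isotopies.
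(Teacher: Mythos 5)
Your argument follows the paper's proof essentially verbatim: first note that the cusp connected sum with the standard sphere $\Lambda_{\OP{St}}$ is honestly Legendrian isotopic to $\Lambda$ (shrinking the flying-saucer summand into the cusp), then use Lemma \ref{lma:FormalIsoRelPt} to make $\Lambda'$ agree with $\Lambda_{\OP{St}}$ near the attaching region and the formal isotopy constant there, and finally extend by the identity over $\Lambda$ and the handle. The only ingredient the paper makes explicit that you leave implicit is a general position argument guaranteeing that the moving $\Lambda'$ stays disjoint from $\Lambda$ and from the interior of the connecting isotropic arc throughout the formal isotopy, so that the glued family really consists of embeddings.
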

\begin{proof}

Cusp connected sum is an operation supported in a neighborhood of an isotropic arc $\gamma$ with endpoints on the two different Legendrians, where the two Legendrians are separated by e.g.~the hyperplane $\{x_1=0\}$; see e.g.~\cite{Dimitroglou:Ambient}. In the dimensions under consideration this operation is well-defined by \cite[Proposition 4.9]{Dimitroglou:Ambient}, and does not depend on the choices made.

First note that the cusp connected sum of $\Lambda$ with the standard Legendrian sphere $\Lambda_{\OP{Std}}$ is Legendrian isotopic to $\Lambda.$ The Legendrian isotopy is easy to construct explicitly if the representative of the standard Legendrian is chosen to be the flying saucer (in the front projection) as shown in Figure \ref{fig:standardsphere}.

We may assume that $\Lambda'$ coincides with the representative of the standard sphere in a neighborhood of the isotropic arc $\gamma$ along which the surgery is performed. The formal isotopy from $\Lambda'$ to $\Lambda_{\OP{St}}$ may further be assumed to have support that is disjoint from the neighborhood of the union $\Lambda \cup \gamma$ by Lemma \ref{lma:FormalIsoRelPt}, together with a general position argument (for the interior of the arc).
\end{proof}

\begin{lma}
\label{lma:looseposition}
Any loose Legendrian $\Lambda \subset \R^{2n+1},$ $n \ge 2,$ is Legendrian isotopic to a representative which satisfies the following property for an arbitrary choice of numbers $A>\delta>0$:
\begin{itemize}
\item There exists a Legendrian fiber $F=\{\mathbf{x}=x_0, z=z_0\}$ for which there are precisely two Reeb chords with one endpoint on $\Lambda$ and one endpoint on the fiber, both which moreover are transverse; 
\item the two Reeb chords between $\Lambda$ and $F$ both start on $\Lambda,$ and their length difference is greater than equal to $A>0$; and
\item the Legendrian $\Lambda$ can be displaced from the fiber by a contact isotopy of oscillation less than $ \delta>0.$
\end{itemize}
(See Figure \ref{fig:loosesphere} for an example.)
\end{lma}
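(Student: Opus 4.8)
The plan is, given numbers $A>\delta>0$, to exhibit a concrete loose Legendrian, formally isotopic to $\Lambda$ and hence (by Murphy's $h$-principle) Legendrian isotopic to it, whose front near a suitable cotangent fiber realizes the two prescribed Reeb chords, but which is nonetheless displaceable from that fiber at arbitrarily small cost, because the $A$-sized part of its front is arranged to be ``horizontal'' in a coordinate direction transverse to the chords.

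By Murphy's $h$-principle \cite{Murphy:Loose}, two loose Legendrians in $\R^{2n+1}$, $n\geq 2$, that are formally Legendrian isotopic are Legendrian isotopic; since $\Lambda$ is loose, it suffices to produce a loose Legendrian $\Lambda'$, formally isotopic to $\Lambda$, realizing the three bullet points. Write coordinates $(x_1,\dots ,x_n,y_1,\dots ,y_n,z)$ on $\R^{2n+1}$ with $\alpha_{\OP{std}}=dz+\sum_i y_i\,dx_i$, and put $\mathbf{x}=(x_1,x_2,x')$. I would take $\Lambda'=\Lambda\#\Lambda_{\OP{St}}(A')$, the cusp connected sum with a rescaled standard sphere ($A'\geq A$), performed away from a loose chart of $\Lambda$ and with the standard-sphere summand placed far out along the positive $x_1$-axis; this is loose, and by Lemma \ref{lma:cuspsum} it is formally isotopic to $\Lambda$. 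It remains to choose the right front for $\Lambda'$. I would normalize the $\Lambda_{\OP{St}}(A')$-summand into a ``flat pancake'': two nearly horizontal sheets at heights $z_1<z_2$ with $z_2-z_1\geq A$ over an $\mathbf{x}$-region $D$, closed up along most of $\partial D$ by gentle cusp edges lying near heights $z_1$ and $z_2$, and attached to the connecting tube (the ``arm'' running back along $x_1$ to the bulk of $\Lambda$) only along a portion of $\partial D$ at an extreme $x_1$-value. Crucially, I would require the pancake together with the whole arm to be $C^1$-close to independent of $x_2$, say $|y_2|<s$ on that part of $\Lambda'$ for a small $s$ to be fixed later, while keeping the bulk of $\Lambda$ far from the slab in which the pancake sits; this is possible because the $z$-drop of size $A'$ along the rim and along the arm can be routed in the $x_1$-direction, which is compatible with $x_2$-flatness.

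Properties (1)--(2) are then read off the front: choose $x_0$ in the interior two-sheet region of $D$, away from the cusp layers, take $z_0$ slightly above $z_2$, and arrange that nothing else of $\Lambda'$ lies over $x_0$ (i.e.\ has $\mathbf{x}$-coordinate $x_0$). Then the fiber $\{\mathbf{x}=x_0\}\subset\R^{2n+1}$ meets $\Lambda'$ in exactly the two pancake points, at heights $z_1,z_2<z_0$; since the sheets are graphs there, the two resulting Reeb chords to $F=\{\mathbf{x}=x_0,\ z=z_0\}$ are transverse, both start on $\Lambda'$, and their lengths $z_0-z_1$ and $z_0-z_2$ differ by $z_2-z_1\geq A$. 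For property (3) I would displace $\Lambda'$ from $F$ by the contact flow of $H=\rho(\mathbf{x})\,y_2$, with $\rho$ a cutoff equal to $1$ near the pancake and supported where $\Lambda'$ is $x_2$-flat and which misses the bulk of $\Lambda$. Since the contact vector field of $y_2$ is exactly $\partial_{x_2}$, this flow rigidly translates the pancake in the $x_2$-direction; running it for a time $T$ slightly larger than the distance from $x_0$ to the $x_2$-boundary of the two-sheet region moves the two points over $x_0$ off the fiber $\{\mathbf{x}=x_0\}$, while the arm never lies over $x_0$ (its $x_1$-coordinate stays away from that of $x_0$ on the portion that moves) and the untouched bulk contributes nothing; hence $F$ is displaced. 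Since the restriction $H|_{\Lambda'}=\rho\,y_2|_{\Lambda'}$ has absolute value less than $s$ everywhere, the oscillation of the isotopy measured on the Legendrian is at most $2sT$, which is $<\delta$ once $s$ and $T$ are chosen small; making this generating function indefinite without changing its oscillation and extending it off $\Lambda'$, as recalled in the introduction, yields a compactly supported contact Hamiltonian of oscillation $<\delta$.

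The main obstacle is property (3): a priori one expects that displacing a feature whose two relevant sheets are a distance $A\gg\delta$ apart should cost oscillation of order $A$ --- indeed, by Lemma \ref{lma:main} the chord lengths change only slowly, so naive retractions or sideways pushes of such a feature do cost $\sim A$. The resolution exploited above is that the two chords need not shorten at all: they vanish as soon as their endpoints leave $\{\mathbf{x}=x_0\}$, so one should displace in the direction ($x_2$) in which the $A$-sized part of the front is flat, using a cutoff supported away from the bulk, so that the slopes entering the oscillation estimate are $O(\delta)$ rather than $O(A)$. Engineering a single front having all of the $A$-separation over $x_0$, the $x_2$-flatness of the pancake and arm, exactly two sheets over $x_0$, and $\Lambda'$ otherwise disjoint from the fiber $\{\mathbf{x}=x_0\}$, and then verifying the oscillation bound, is the technical heart.
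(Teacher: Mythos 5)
Your overall skeleton (reduce via cusp connected sum and Murphy's $h$-principle to constructing a good sphere summand; put the fiber over a point of the two-sheet region; displace by a sideways translation whose contact Hamiltonian is linear in $y$ and hence small on the Legendrian where the relevant slopes are small) is the same as the paper's, and your displacement mechanics are fine. The genuine gap is at what you yourself call the technical heart: the assertion that the summand can be taken to be the honest standard sphere $\Lambda_{\OP{St}}(A')$ normalized into an ``$x_2$-flat pancake'' whose $A$-sized height drop is routed entirely in the $x_1$-direction. No such representative exists, and the obstruction is quantitative, not an engineering detail. Along the $x_2$-segment you must sweep to vacate the point $x_0$, the two sheets over $x_0$ either persist until they merge with each other, in which case the height gap (of size $\ge A$) closes at rate at most $2s$ in $x_2$, forcing the sweep distance $d\ge A/(2s)$ and hence cost $\approx 2sd\ge A$ rather than $<\delta$; or they terminate by folding back onto further sheets near the same heights, i.e.\ the front switchbacks repeatedly in the displacement direction. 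The second option is exactly a many-zig-zag (stabilized) front: it is compatible with all your requirements, but such a sphere is loose and only \emph{formally} isotopic to $\Lambda_{\OP{St}}$, not Legendrian isotopic to it. Independently of front combinatorics, your claimed representative cannot exist for $\Lambda_{\OP{St}}(A')$ because the standard sphere admits an augmentation: running the argument of Section \ref{sec:NonSqueeze} on $\Lambda_{\OP{St}}(A')\cup F$ (with the fiber placed high enough that both chords are long), the two mixed chords with action gap $\ge A$ produce a bar of length $\ge A$ (or infinite bars) in $LCC_*^\infty$, and by Lemma \ref{lma:main}, Proposition \ref{prp:Bifurcation} and Proposition \ref{prp:Barcode} this cannot be destroyed by an isotopy of oscillation $<\delta<A$; so an augmentable summand with your two bullet properties is never displaceable from $F$ at cost $<\delta$. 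In other words, the looseness of the summand is not optional: it is what kills the augmentation and permits the coexistence of a large chord gap with cheap displacement.

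The paper's proof avoids exactly this trap: by Lemma \ref{lma:cuspsum} the summand only needs to be \emph{formally} isotopic to $\Lambda_{\OP{St}}$, so it takes a loose sphere with many zig-zags of uniformly small slope (Figure \ref{fig:loosesphere}); the many switchbacks let the $z$-drop of size $A$ occur inside a fixed small region of the Lagrangian projection with $|y|$ small, so that translating the shadow off $x_0$ (the mechanism of Lemma \ref{lma:sharp}) costs less than $\delta$, and Murphy's $h$-principle then supplies the Legendrian isotopy to $\Lambda$. To repair your argument you should replace $\Lambda_{\OP{St}}(A')$ by such a zig-zagged loose sphere in the standard formal class (your cutoff-translation estimate then goes through essentially verbatim), and you would also need to address dimensions $n>2$, which the paper handles by an induction using a stabilization over a domain with vanishing Euler characteristic that is a product near $\{x_1=0\}$.
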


\begin{figure}[htp]
\vspace{5mm}
\centering
\labellist
\pinlabel $z$ at 12 170
\pinlabel $x_1$ at -3 -5
\pinlabel $x_2$ at 129 11
\pinlabel $\Lambda$ at 117 140
\pinlabel $\color{blue}F$ at 59 164
\pinlabel $\color{red}a$ at 53 152
\pinlabel $\color{red}b$ at 61 153
\endlabellist
\includegraphics[scale=1.5]{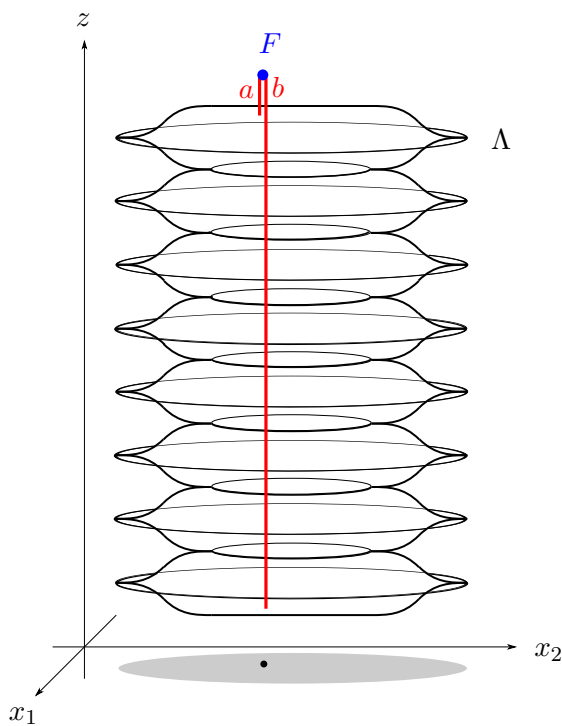}
\vspace{3mm}
\caption{Introducing many zig-zags we may assume that the difference of length between the chords $a$ and $b$ from $\Lambda$ to the fiber $F$ becomes arbitrarily large, while the Lagrangian projection of $\Lambda$ is still contained inside a fixed subset of $\R^4.$}
\label{fig:loosesphere}
\end{figure}

\begin{proof}
In view of Lemma \ref{lma:cuspsum} and Murphy's $h$-principle for loose Legendrians \cite{Murphy:Loose} it suffices to construct a Legendrian sphere in the formal Legendrian isotopy class of the standard sphere that satisfies the properties in the statement. Indeed, it is then a simple matter of taking a cusp connected sum with $\Lambda$ and that sphere.

We begin by constructing the sphere $\Lambda$ of dimension $n=2$ that satisfies the assumptions, and which is formally isotopic to $\Lambda_{\OP{St}}.$ In this dimension there is a unique formal isotopy class of Legendrian spheres \cite{Murphy:Loose}. Considering loose spheres as depicted in Figure \ref{fig:loosesphere}, with sufficiently many zig-zags, one can readily produce sought examples. 

Increasing the number of zig-zags allows us to increase the $z$-coordinate while keeping $y_1,y_2$ small. By Lemma \ref{lma:sharp} the displacing Hamiltonian can then be made small. Thus, a high number of zig-zags makes more optimal the constants $A>\delta>0.$ 

 The construction of the spheres in higher dimensions $\R^{2n+1},$ $n>2,$ can be done by induction. Assume that we have produced the sought embedded sphere $\Lambda^{n-1}$ in dimension $n-1$.

Consider the standard Legendrian $n$-sphere $\Lambda_{\OP{St}}^n,$ which we assume to have its cusp-edge contained above the unit sphere in $\R^n=\{(x_1,\ldots,x_n)\}.$ We then perform a stabilization over a closed domain $U \subset B^n \subset \R^n$ with smooth boundary and Euler characteristic $\chi(U)=0.$ The resulting Legendrian is loose and formally isotopic to the standard sphere; see \cite[Lemma 2.2]{Ekholm:Constructing}. We can find such a domain $U$ which moreover is of the form $[-\epsilon,\epsilon] \times U^{n-1}$ near the hyperplane $\{x_1=0\},$ where $U^{n-1}$ again has vanishing Euler characteristic.

We can assume the stabilization of $\Lambda_{\OP{St}}^n$ intersects the hypersurface
$$\{x_1=0=y_1\} = \{(0,0)\} \times \R^{2n-1} \subset \R^{2n+1}$$
in an $(n-1)$-dimensional Legendrian sphere, which again is loose and formally isotopic to the standard sphere; this intersection is itself the stabilization of $\Lambda^{n-1}_{\OP{St}}$ by $U^{n-1}.$ After a suitable Legendrian isotopy in $\R^{2n-1}$ lifted to $\R^{2n+1}$ we have thus managed to construct a loose Legendrian $n$-sphere in the formal isotopy class of $\Lambda_{\OP{St}}^n$ which coincides with the cylinder
$$[-\epsilon/2,\epsilon/2] \times \{0\} \times \Lambda^{n-1} \subset \{(x_1,y_1)\} \times \R^{2n-1}=\R^{2n+1}$$
in some neighborhood of $\{x_1=0\}.$ The sought fiber $F$ can be found in the same region.
\end{proof}

We now prove Theorem \ref{thm:NonSqueeze} when $n \ge 2$; note that a stabilized Legendrian is loose in these dimension. Given that the statement is shown in higher dimension, we obtain the statement for knots as follows. The front-spinning construction \cite{NonIsoLeg} applied to a stabilized Legendrian knot in $\R^3$ produces a loose Legendrian torus inside $\R^5.$ A potential example of a knot that is squeezed into a neighborhood of a stabilized can thus be spun to an example of an augmentable torus that can be squeezed into a neighborhood of a loose torus. To that end, note that the front spinning preserves Legendrian isotopy classes as well as augmentability \cite{IsotopiesKnots}.

\begin{rmk}
\label{rmk:Lenny}
Lenny Ng pointed out to us that the dimension 1 case can also be proved with rulings. Consider the local picture of an odd-covering of the stabilization. Any attempt at pairing the sheets, necessary for the (local) construction of the ruling, leaves one copy and its two stabilization cusps unpaired. A 1-dimensional knot with an augmentation must have a ruling \cite{FuchsIshkanov, Sabloff}.
\end{rmk}

%In the following we consider the case of a loose Legendrian $\Lambda_{\OP{loose}} \subset \R^{2n+1}$ of dimension $n \ge 2.$ 
Denote by $\Lambda_{\OP{loose}} \subset \R^{2n+1},$ $n \ge 2,$ the arbitrary stabilized, hence loose, Legendrian from Theorem \ref{thm:NonSqueeze}, and assume there exists a Legendrian $\Lambda$ which admits an augmentation and can be squeezed into a standard contact neighborhood of $\Lambda_{\OP{loose}}.$

Place the loose Legendrian in the position satisfying the conclusion Lemma \ref{lma:looseposition}. By assumption, we can isotope $\Lambda$ into a standard neighborhood of 
$\Lambda_{\OP{loose}}$ contactomorphic to a neighborhood of the zero section inside $J^1\Lambda_{\OP{loose}}$ \cite[Theorem 6.2.2]{Geiges}.

By a fiber-wise rescaling of $\Lambda$ inside this jet-bundle, together with a general position argument, we can then assume that all mixed Reeb chords between the fiber $F$ (see Lemma \ref{lma:looseposition}) and $\Lambda$ all end on $F$ and start on $\Lambda$, and come of two types: an odd number of Reeb chords of action roughly equal to length $l>0$ and an odd number of Reeb chords of length roughly equal to $A+l,$ where $A \gg 0$ is arbitrarily large. Here we use the assumption that the degree of the bundle projection $$\Lambda \subset J^1\Lambda_{\OP{loose}} \to \Lambda_{\OP{loose}}$$ is of odd degree to infer that both clusters of Reeb chords are odd.

We claim that there must be a bar in the complex $LCC_*^\infty(\Lambda \cup F)^{+\infty}_{-\infty}$ of length at least equal to $A.$ (We use that $\Lambda$ and $F$ have augmentations to be able to set $l = \infty.$) Indeed, the subcomplex consisting of chords of length strictly less than $A$ is odd-dimensional and hence not acyclic, one can then apply Lemma \ref{lma:barcode} to infer the existence of the long bar. Since $\Lambda$ is displaceable from the fiber by a contact isotopy of some fixed small oscillation $\delta>0$ by Lemma \ref{lma:looseposition}, we now arrive at a contradiction by the following argument: Proposition \ref{prp:Bifurcation} shows that the analysis of the bifurcation of barcodes from Proposition \ref{prp:Barcode} can be applied to the family of linearised Legendrian contact homology complexes induced by the displacement. However, using Lemma \ref{lma:main}, the long bars found above cannot disappear in such a family induced by a Hamiltonian of small oscillation.

\section{Proof of Theorem \ref{thm:DisplaceLoose}}

Consider a smooth isotopy $\Sigma_t \subset (X^4,\omega)$ of a compact symplectic embedded surface inside a symplectic four-manifold, where $\partial \Sigma_t \neq \emptyset.$ Assume that $\Sigma_t$ is fixed near the boundary. 
%it is possible to find a global generating Hamiltonian $H_t \colon X \to \R$ which moreover vanishes near $\partial \Sigma_t$; 
%see the proof of \cite[Proposition 0.3]{Siebert:OnTheHolomorphicity}.

\begin{lma}
\label{lma:sympiso}
The Hamiltonian $H_t \colon X \to \R$ for which $\phi^t_{H_t}(\Sigma_0)=\Sigma_t$ can be taken to vanish along all of $\Sigma_t.$ Hence, after a deformation by a suitable family of cut-off functions, we may assume that the uniform norm of $H_t$ is arbitrarily small on all of $X.$
\end{lma}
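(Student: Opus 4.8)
The plan is to exploit the fact that $\Sigma_t$ is a symplectic submanifold together with the standard extension lemma for Hamiltonians prescribed along a moving submanifold. First I would let $V_t$ denote the time-dependent vector field along $\Sigma_t$ generating the isotopy, i.e.\ $V_t = \tfrac{d}{dt}(\text{inclusion of } \Sigma_t)$, viewed as a section of $TX|_{\Sigma_t}$. The goal is to produce a compactly supported function $H_t \colon X \to \R$ with $X_{H_t}|_{\Sigma_t} = V_t$ (so that $\phi^t_{H_t}(\Sigma_0) = \Sigma_t$) and with $H_t|_{\Sigma_t} \equiv 0$. Since $\Sigma_t$ is symplectic, the tangent bundle splits as $TX|_{\Sigma_t} = T\Sigma_t \oplus (T\Sigma_t)^{\omega}$, and both summands are symplectic subbundles. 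This splitting is what allows the normalization $H_t|_{\Sigma_t} = 0$: a Hamiltonian vanishing on $\Sigma_t$ has $dH_t$ vanishing on $T\Sigma_t$ there, hence $X_{H_t}$ is forced to be $\omega$-orthogonal to $T\Sigma_t$ along $\Sigma_t$ — but $V_t$ need not be $\omega$-orthogonal to $\Sigma_t$. So the first real step is to replace $V_t$ by a vector field $V_t'$ tangent-to-$\Sigma_t$-modified so that $\phi^t$ still carries $\Sigma_0$ to $\Sigma_t$: concretely, reparametrize the isotopy by an ambient diffeomorphism preserving each $\Sigma_t$ setwise, equivalently subtract from $V_t$ a time-dependent vector field tangent to $\Sigma_t$ (which changes $\phi^t$ only by a family of diffeomorphisms of the surfaces and hence does not change the sets $\Sigma_t$). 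After this reduction I may assume $V_t$ is a section of the symplectic normal bundle $(T\Sigma_t)^\omega$.

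Next I would build $H_t$ locally near $\Sigma_t$ and then cut off. Using a symplectic tubular neighborhood theorem (a Weinstein-type normal form for the symplectic submanifold $\Sigma_t$, depending smoothly on $t$), identify a neighborhood of $\Sigma_t$ with a neighborhood of the zero section in the symplectic normal bundle, equipped with a model symplectic form. In these fiber coordinates $(p,q)$ on the normal bundle, define $H_t$ to be linear in the normal directions with coefficients determined by $V_t$: explicitly, if $V_t$ corresponds to a normal vector with components, pair it against the fiber symplectic form to get a linear functional on each fiber, and let $H_t$ equal that functional (times a fiber coordinate), so that $H_t$ vanishes on the zero section $\Sigma_t$ and $X_{H_t}|_{\Sigma_t} = V_t$ by construction. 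Then multiply by a smooth cut-off function $\chi$ equal to $1$ on a smaller neighborhood of $\Sigma_t$ and supported in the tubular neighborhood; since $\chi$ and its derivatives are bounded and the tube can be taken thin, the resulting compactly supported $H_t$ has $C^0$-norm (indeed $C^\infty$-norm on a fixed compact set) as small as we like, simply by shrinking the tube: $|H_t| \le C \cdot (\text{tube radius})$ because $H_t$ is linear, hence vanishing, in the normal directions. This gives the second sentence of the lemma. The hypothesis that $\Sigma_t$ is fixed near $\partial \Sigma_t$ guarantees the whole construction can be carried out with all cut-offs constant there, so nothing goes wrong at the boundary and $H_t$ is genuinely compactly supported in the interior region where the isotopy is nontrivial.

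The main obstacle I expect is the bookkeeping needed to make the tubular-neighborhood normal form and the reparametrization depend smoothly on $t$ simultaneously — that is, producing a \emph{single} smooth family of Weinstein charts along the moving family $\Sigma_t$, and checking that the tangential correction to $V_t$ can be chosen smoothly in $t$ as well. This is a standard but slightly fiddly Moser-type argument (differentiate the family of symplectic normal forms in $t$ and solve the resulting linear equation for the generating vector field), and it is the only place where one must be careful; the rest is essentially linear algebra in the fibers plus an elementary estimate on the cut-off. Everything else — that $\phi^t_{H_t}(\Sigma_0) = \Sigma_t$ follows from $X_{H_t}|_{\Sigma_t} = V_t$, and that $H_t|_{\Sigma_t} = 0$ makes the $C^0$-norm controllable — is immediate from the construction.
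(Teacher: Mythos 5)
Your argument is correct, but it takes a different route from the paper: the paper's proof of this lemma is a one-line deferral to the construction in the proof of \cite[Proposition 0.3]{Siebert:OnTheHolomorphicity} (an earlier draft supplemented that citation by correcting a given generating Hamiltonian, composing with a Hamiltonian isotopy that preserves each $\Sigma_t$ setwise, induced by pulling back $-H_t|_{\Sigma_t}$ under the projection of a standard disk-bundle neighborhood), whereas you build the Hamiltonian from scratch. Your normalization happens at the level of vector fields: you use the splitting $TX|_{\Sigma_t}=T\Sigma_t\oplus(T\Sigma_t)^{\omega}$ --- which is exactly where the hypothesis that $\Sigma_t$ is symplectic enters, and which correctly identifies why $H_t|_{\Sigma_t}=0$ forces the generating field to be $\omega$-orthogonal to $\Sigma_t$ --- discard the tangential part of the velocity (changing only the parametrization, not the sets $\Sigma_t$), and then prescribe the $1$-jet of $H_t$ along $\Sigma_t$: value zero and normal derivative $\iota_{V_t}\omega$, fiberwise linear in a tube, cut off. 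This buys a self-contained proof in place of the citation, and it makes transparent both where symplecticity is used and why the uniform norm is controlled by $\sup|V_t|\cdot(\text{tube radius})$. One simplification you could make: since $X_{H_t}|_{\Sigma_t}$ is determined by $dH_t$ at points of $\Sigma_t$ alone, only the $1$-jet of $H_t$ along $\Sigma_t$ matters, so any smooth family of tubular neighborhoods (e.g.\ obtained from the isotopy extension theorem) suffices; the full Weinstein-type normal form and the Moser bookkeeping you single out as the delicate step are not actually needed.

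Two small inaccuracies, neither affecting the lemma as stated: the parenthetical claim that the $C^{\infty}$-norm can be made arbitrarily small is false, since $dH_t$ along $\Sigma_t$ must equal $\iota_{V_t'}\omega$, which has fixed size; and as the tube shrinks the cut-off's derivatives blow up rather than stay bounded. What survives, and is all the lemma requires, is the $C^0$-estimate $|\chi H_t|\le\sup|V_t'|\cdot(\text{tube radius})$, together with the fact that along $\Sigma_t$ one has $H_t=0$ and $\chi\equiv 1$, so the cut-off does not change $X_{H_t}|_{\Sigma_t}$ and the flow still carries $\Sigma_0$ to $\Sigma_t$.
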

\begin{proof}
 The Hamiltonian is constructed in the proof of \cite[Proposition 0.3]{Siebert:OnTheHolomorphicity}. 

\begin{comment}
The initial Hamiltonian isotopy produced by the aforementioned proposition says nothing about the behavior of the Hamiltonian along $\Sigma_t$. However, we claim that a suitable Hamiltonian isotopy that fixes $\Sigma_t$ setwise can be used to correct the value of $H_t|_{\Sigma_t}.$

Consider a standard neighborhood of $\Sigma_0$ which is a trivial symplectic disk-bundle with a product symplectic form. There is a one-to-one correspondence between Hamiltonian isotopies that take fibers to fibers and Hamiltonian isotopies of the base $\Sigma_0$: on the level of Hamiltonians, it is simply given as the precomposition on the Hamiltonian on the base with the bundle projection.

Using the Hamiltonian flow together with the above identification, we get a family of standard neighborhoods of $\Sigma_t.$ In these identifications we then add the precomposition of the bundle-projection and the Hamiltonian $-H_t|_{\Sigma_t} \colon \Sigma_t \to \R$ on the base to the original Hamiltonian $H_t.$ (Here a suitable smoothing must be performed outside of the standard neighborhood.) The newly produced Hamiltonian $\widetilde{H}_t$ then satisfies $\phi^t_{\widetilde{H}_t}|_{\Sigma_0}=\phi^t_{H_t} \circ \phi^t_{h_t}$ for $h_t\coloneqq -H_t\circ\phi^t_{H_t}|_{\Sigma_0}.$ In particular, it vanishes along $\Sigma_t$ and still takes $\Sigma_0$ to $\Sigma_t.$
\end{comment}
\end{proof}

Assume that the cusp-edge of the front projection of $\Lambda_{\OP{St}}(1)$ lives above the unit circle in the $(x_1,x_2)$-plane; see Figure \ref{fig:standardsphere}.
Let $\Pi \colon \R^5 \to \R^4=\{(x_i,y_i)\}$ denote the canonical projection.

We begin the proof of Theorem \ref{thm:DisplaceLoose} in Steps 1--3 below by constructing a (non-Legendrian) smooth and arbitrarily $C^0$-small push-off of $\Lambda_{\OP{St}}(1)$ that can be displaced by a contact Hamiltonian of any fixed small oscillation. (Compare to the result in \cite{HamiltonianDisjunction} for non-Lagrangian submanifolds.) In Step 4, we show how with a small oscillation, the stabilized sphere in the Theorem's statement is Legendrian isotopic to a Legendrian $C^0$-close to the initial push-off. We then apply the ambient isotopy of the non-Legendrian's displacement from Steps 1--3 to complete the Legendrian isotopy and the proof.

\emph{Step 1:} Arguing similarly as in the proof of Lemma \ref{lma:sharp} in Section \ref{pf:sharp}, but while taking some additional care, one can readily find a Legendrian isotopy $\Lambda_t$ from $\Lambda_0 = \Lambda_{\OP{St}}(1)$ to $\Lambda_1=\Lambda'$ for which the isotopy $\Pi(\Lambda_t)$ has support in the interior of $U \coloneqq \{(x_1,x_2) \in D^2_{1-2\epsilon} \setminus D^2_{2\epsilon}\} \subset \R^4,$ and such that $\Lambda'$ is displaceable by the lift of a Hamiltonian on $\R^4$ of very small oscillation. This is because $\Pi(\Lambda')$ may be assumed to live in a small neighborhood of the Lagrangian disc $\{(x_1,x_2,y_1,y_2) \in D^2_1\times\{0\} \} \subset \R^4.$

To live in such a small neighborhood, the Reeb chord of $\Lambda_t$ must certainly shrink; however, the Lagrangian projection $\Pi(\Lambda_t)$ still can be assumed to be fixed near the corresponding double-point. We may assume that all $\Lambda_t$ have a unique Reeb chord and that their projections to the $(x_1,x_2)$-plane are submersions inside $\Pi^{-1}(U) \subset \R^5.$

\emph{Step 2:} We now construct a $C^0$-small \emph{non-Legendrian} push-off $\widetilde{\Lambda}_t$ of $\Lambda_t$ such that
\begin{itemize}
\item the image $\Pi(\widetilde{\Lambda}_t)$ is \emph{symplectic} in the subset $U = \{(x_1,x_2) \in D^2_{1-2\epsilon} \setminus D^2_{2\epsilon}\} \subset \R^4,$
\item $\widetilde{\Lambda}_t=\Lambda_t$ in the complement of the subset $\Pi^{-1}(V)$ for $V \coloneqq \{(x_1,x_2) \in D^2_{1-\epsilon} \setminus D^2_\epsilon\} \subset \R^4,$ and
\item $\widetilde{\Lambda}_t$ does not depend on $t$ inside $\Pi^{-1}(V \setminus U).$
\end{itemize}
The deformation can be performed by, for example, considering a Lagrangian standard neighborhood of $\Pi(\Lambda_t) \subset \R^4$ and pushing it off as a section consisting of a suitable family of one-forms whose exterior derivative is a symplectic form on $\Lambda_t$ inside $U.$ Note that this deformation $\widetilde{\Lambda}_t$ necessarily must be anti-symplectic somewhere inside $V \setminus U$ by Stokes' theorem: a closed chain inside $\R^4$ cannot have nonzero symplectic area. However, the family of one-forms can still be taken to be fixed inside $V \setminus U,$ which ensures the third bullet point above. 

\emph{Step 3:} We can now apply Lemma \ref{lma:sympiso} inside $U \subset \R^4$ to $\Pi(\widetilde{\Lambda}_t)$ in order to deduce the existence of a Hamiltonian on $\R^4$ of arbitrarily small oscillation that generates the isotopy $\Pi(\widetilde{\Lambda}_t) \subset \R^4.$ Since $\widetilde{\Lambda}_1$ is assumed to be arbitrarily $C^0$-close to $\Lambda',$ and $\Lambda'$ is displaceable by the lift of a Hamiltonian on $\R^4$ having small oscillation, this finishes the construction of the push-off with a displacement of small oscillatory norm.

\emph{Step 4:}
Consider a standard contact neighborhood $(T^*_{\le \epsilon}S^2 \times [-\epsilon,\epsilon],dz-\mathbf{p}d\mathbf{q})$ of $\Lambda_{\OP{St}}(1)$ in which $\Lambda_{\OP{St}}(1)$ corresponds to the zero section, and which contains the non-Legendrian sphere $\widetilde{\Lambda}_0$ which can be displaced with small oscillation. Here we may assume that $\epsilon>0$ is arbitrarily small.

We add a stabilization to $\Lambda_{\OP{St}}(1)$ inside the above standard neighborhood to create a loose Legendrian $\Lambda_{\OP{loose}}$ in the same formal isotopy class as $\Lambda_{\OP{St}}(1),$ which is the unique 2-sphere formal isotopy class \cite{Murphy:Loose}. By Murphy's $h$-principle \cite{Murphy:Loose} we can find a Legendrian isotopy confined to the above standard neighborhood that takes $\Lambda_{\OP{loose}}$ to a Legendrian that approximates $\widetilde{\Lambda}_0$ arbitrarily well in the $C^0$-norm; here we need that $\widetilde{\Lambda}_0$ admits a $C^0$ approximation by a Legendrian sphere \cite{Dan} and the aforementioned uniqueness of the formal isotopy class. What remains is to argue that the oscillation of this Legendrian isotopy can be assumed to be of order $\epsilon.$

This can be achieved by applying a fiber-wise rescaling by a small positive number to the whole isotopy, thereby making it confined to an arbitrarily small neighborhood of the zero-section. We then just need to estimate how much oscillation is needed to do the initial shrinking of $\Lambda_{\OP{loose}},$ together with the expansion back to the approximation of $\widetilde{\Lambda}_0.$ The crucial estimates of the oscillation of the fiber-wise rescaling, i.e.~the contact isotopy
$$(\mathbf{q},\mathbf{p},z) \mapsto (\mathbf{q},(1-t)\mathbf{p},(1-t)z),\:\:t \in [0,1),$$
were considered in the proof of Lemma \ref{lma:sharp} in Section \ref{pf:sharp}. Its generating contact Hamiltonian with respect to the standard tautological contact form $dz-\mathbf{p}d\mathbf{q}$ is given by $H_t=-\frac{z}{1-t}.$ The $\epsilon$-neighborhood of the zero-section can thus be shrunk to a $\lambda \cdot \epsilon$-neighborhood, where $0<\lambda<1,$ with a contact Hamiltonian of oscillation $2(1-\lambda)\epsilon.$ 

\qed

\bibliographystyle{plain}
\bibliography{references}
\end{document}